\title{Quaternionic Second-Order Freeness and the Fluctuations of Large Symplectically Invariant Random Matrices}
\author{C.\ E.\ I.\ Redelmeier\thanks{Research supported by a two-year Sophie Germain post-doctoral scholarship provided by the Fondation math\'{e}matique Jacques Hadarmard, held at the D\'{e}partement de Math\'{e}matiques, UMR 8628 Universit\'{e} Paris-Sud 11-CNRS, B\^{a}timent 425, Facult\'{e} des Sciences d'Orsay, Universit\'{e} Paris-Sud 11, F-91405 Orsay Cedex.}}

\documentclass[11pt]{article}

\usepackage{graphicx}
\usepackage{graphics}
\usepackage{amsfonts}
\usepackage{amssymb}
\usepackage{amsthm}
\usepackage{amsmath}
\usepackage{color}
\usepackage{cite}

\hyphenation{Wis-hart Hil-bert non-stand-ard}

\newtheorem{theorem}{Theorem}[section]
\newtheorem{lemma}[theorem]{Lemma}
\newtheorem{proposition}[theorem]{Proposition}
\newtheorem{corollary}[theorem]{Corollary}

\theoremstyle{remark}
\newtheorem{remark}[theorem]{Remark}

\newtheorem{example}[theorem]{Example}

\theoremstyle{definition}
\newtheorem{definition}[theorem]{Definition}

\begin{document}

\maketitle

\begin{abstract}
We present a definition for second-order freeness in the quaternionic case.  We demonstrate that this definition on a second-order probability space is asymptotically satisfied by independent symplectically invariant quaternionic matrices.

This definition is different from the natural definition for complex and real second-order probability spaces, those motivated by the asymptotic behaviour of unitarily invariant and orthogonally invariant random matrices respectively.

Most notably, because the quaternionic trace does not have the cyclic property of a trace over a commutative field, the asymmetries which appear in the multi-matrix context result in an asymmetric contribution from the terms which appear symmetrically in the complex and real cases.
\end{abstract}

\section{Introduction}

In \cite{MR1094052}, D.-V.\ Voiculescu established a connection between free probability, a noncommutative analogue of classical probability, and the behaviour of large random matrices.  Many ensembles of large random matrices exhibit this asymptotic freeness, a property characterizing the behaviour of the expected value of the trace of a product of independent matrices in terms of the behaviour of the matrices individually.  The same behaviour is observed in complex, real, and quaternionic random matrices.

Second-order freeness is developed in \cite{MR2216446, MR2294222, MR2302524} in order to similarly characterize the covariances of traces of multi-matrix expressions in high dimensions.  The behaviour described here is exhibited by independent ensembles of matrices whose probability distribution is unitarily invariant (unchanged if the matrices are conjugated by an arbitrary unitary matrix), a natural symmetry for complex random matrices.  The behaviour of orthogonally invariant random matrices (a natural symmetry for real matrices) is discussed in \cite{MR3217665}.  Independent orthogonally invariant random matrices do not generally satisfy the definition given in \cite{MR2216446}, so another definition is provided for real second-order freeness.  This paper gives a definition of quaternionic second-order freeness, motivated by the covariance of traces of products of large symplectically invariant matrices.

We consider the covariance of the quaternionic trace (as opposed to the real part of the trace).  While the behaviour of quaternionic eigenvalues are more complicated than eigenvalues over a commutative field (for example, the qualitatively different behaviour of left and right eigenvalues), eigenvalues may nonetheless be quaternion-valued, and the sum of quaternionic eigenvalues is related to the sum of the quaternionic diagonal entries rather than to the real part of this sum (see, e.g., \cite{MR3135396}, Chapter~9).  Thus the quaternion-valued (partial) trace is the relevant quantity.

If \(\mathbb{E}\left(\mathrm{Tr}\left(X_{k}\right)\right)=\mathbb{E}\left(\mathrm{Tr}\left(Y_{k}\right)\right)=0\), the asymptotic covariance will vanish if the number of independent terms in the two traces is different, and if the number is the same:
\begin{multline}
\label{formula: introduction}
\lim_{N\rightarrow\infty}\mathrm{cov}\left(\mathrm{Tr}\left(X_{1}\cdots X_{p}\right),\mathrm{Tr}\left(Y_{1}\cdots Y_{p}\right)\right)
\\=\prod_{i=1}^{p}\lim_{N\rightarrow\infty}\mathrm{Re}\left(\frac{1}{N}\mathrm{Tr}\left(X_{i}Y_{p+1-i}\right)\right)+\frac{1}{4}\sum_{k=1}^{p-1}\prod_{i=1}^{p}\lim_{N\rightarrow\infty}\mathrm{Re}\left(\frac{1}{N}\mathrm{Tr}\left(X_{i}Y_{p+1-k-i}\right)\right)\\-\frac{1}{2}\prod_{i=1}^{p}\lim_{N\rightarrow\infty}\mathrm{Re}\left(\frac{1}{N}\mathrm{Tr}\left(X_{i}Y_{i}^{\ast}\right)\right)+\frac{1}{4}\sum_{k=1}^{p-1}\prod_{i=1}^{p}\lim_{N\rightarrow\infty}\mathrm{Re}\left(\frac{1}{N}\mathrm{Tr}\left(X_{i}Y_{k+i}^{\ast}\right)\right)
\end{multline}
where subscripts are taken modulo \(p\) (see Definition~\ref{definition: second-order freeness} for the full definition).  This is different from the analogous expression for unitarily and orthogonally invariant matrices in two notable ways.

Most notably, because the quaternionic trace is not cyclic on products (because the quaternions are not commutative), the cyclic symmetries of traces break down in the multi-matrix context which is integral to the freeness and second-order freeness setting.  The real and complex expressions are sums over terms known as ``spoke diagrams'' (see Figure~\ref{figure: spokes}), which are invariant under cycling the terms within each of the two traces.  In the quaternionic case, two distinguished terms appear with a different coefficient.  (In the real case, all terms appear with the coefficient \(1\).  In the complex definition, the first two terms appear with coefficient \(1\), and the last two terms do not appear.)

Secondly, the contribution from each ``spoke'' is the expected value of the real part of a trace, rather than simply the expected value of the trace.  The real-part function does not appear in either the real or the complex case, where the covariance may be complex-valued when a unitarily invariant or complex-valued orthogonally invariant matrix appears (for example a Wishart matrix (see Definition~\ref{definition: Wishart}) \(G^{\ast}DG\) or \(G^{T}DG\) where the deterministic matrix \(D\) is complex-valued).  However, quaternionic conjugation is a diffeomorphism (unlike complex conjugation), so the contribution of a term and its conjugate in a symplectically symmetric distribution will be equal, and their quaternion-imaginary parts will cancel.

In this paper, we use a topological expansion for quaternionic multi-matrix expressions from \cite{2014arXiv1412.0646R} (see also \cite{MR2005857, MR2480549}).  Asymptotic terms are characterized by the annular noncrossing conditions in \cite{MR2052516} (see also \cite{MR3217665, 2012arXiv1204.6211R} for more on the unoriented case).  The necessary background is outlined in Section~\ref{section: preliminaries}.  The definitions of quaternionic second-order freeness and asymptotic quaternionic second-order freeness are also given in this section.  The asymptotic behaviour of cumulants of traces is given in Section~\ref{section: cumulants}.  The proof that independent symplectically invariant random matrices satisfy the definition of quaternionic second-order freeness is given in Section~\ref{section: second-order freeness}.  A number of important random matrix ensembles (Ginibre, GSE, Wishart, and Haar-distributed symplectic) are discussed in Section~\ref{section: zoo}, including demonstrations that they satisfy the necessary convergence conditions and some information on the values of their contributions to expression (\ref{formula: introduction}).

\section{Preliminaries}
\label{section: preliminaries}

\subsection{Partitions, permutations, and maps}

\begin{definition}
For \(m,n\in\mathbb{N}\), we let \(\left[n\right]:=\left\{1,\ldots,n\right\}\) and \(\left[m,n\right]=\left\{m,\ldots,n\right\}\).

We will sometimes append a formal point at infinity to a set: \(I_{\infty}=I\cup\left\{\infty\right\}\).

For a set \(I\subseteq\mathbb{Z}\), we let \(-I:=\left\{-k:k\in I\right\}\) and \(\pm I:=I\cup\left(-I\right)\).
\end{definition}

\begin{definition}
A {\em partition} (or a {\em set partition}) \(\pi=\left\{V_{1},\ldots,V_{m}\right\}\) of a set \(I\) is a set of nonempty subsets of \(I\) (called {\em blocks} of the partition) such that \(V_{i}\cap V_{j}=\emptyset\) for all \(i\neq j\), and \(\bigcup_{i=1}^{m}V_{i}=I\).  We denote the set of partitions of \(I\) by \({\cal P}\left(I\right)\) and the set of partitions of \(\left[n\right]\) by \({\cal P}\left(n\right)\).

For \(\pi=\left\{V_{1},\ldots,V_{m}\right\}\in{\cal P}\left(I\right)\), we let \(\pm\pi:=\left\{\pm V_{1},\ldots,\pm V_{m}\right\}\in{\cal P}\left(\pm I\right)\).  If \(J\subseteq I\), we let \(\left.\pi\right|_{J}=\left\{V_{1}\cap J,\ldots,V_{m}\cap J\right\}\in{\cal P}\left(J\right)\) (ignoring empty blocks).

We denote the number of blocks of partition \(\pi\in{\cal P}\left(I\right)\) by \(\#\left(\pi\right)\).

For a function \(f:I\rightarrow J\), we define the {\em kernel of \(f\)}, denoted \(\ker\left(f\right)\), to be the partition of \(I\) whose blocks are the nonempty preimages \(f^{-1}\left(j\right)\) for each \(j\in J\).

The partitions \({\cal P}\left(I\right)\) form a poset, where, for \(\pi,\rho\in{\cal P}\left(I\right)\), we say that \(\pi\preceq\rho\) if every block of \(\pi\) is a subset of a block of \(\rho\).

The smallest element of the poset \({\cal P}\left(I\right)\) (in which each block contains exactly one element) is denoted \(0_{I}\), and the largest element (in which all the elements are contained in one block) is denoted \(1_{I}\).  We denote \(0_{n}:=0_{\left[n\right]}\) and \(1_{n}:=1_{\left[n\right]}\), or simply \(0\) and \(1\) if \(I\) is understood.

The {\em join} of two partitions \(\pi,\rho\in{\cal P}\left(I\right)\), denoted \(\pi\vee\rho\), is the smallest element which is bigger than both \(\pi\) and \(\rho\).  The {\em meet} of \(\pi\) and \(\rho\), denoted \(\pi\wedge\rho\), is the largest partition which is smaller than both \(\pi\) and \(\rho\).  (The poset of partitions forms a lattice, so the meet and join are uniquely defined.)

We say that a partition \(\pi\in{\cal P}\left(I\right)\) {\em connects} blocks \(V,W\in\rho\) of another partition \(\rho\in{\cal P}\left(I\right)\) if \(V\) and \(W\) are subsets of the same block of \(\pi\vee\rho\).

A partition whose blocks all contain exactly \(2\) elements is called a {\em pairing}.  We denote the set of all pairings of \(I\) by \({\cal P}_{2}\left(I\right)\), and the set of all pairings of \(\left[n\right]\) by \({\cal P}_{2}\left(n\right)\).
\end{definition}

The following lemma will be useful:
\begin{lemma}
If \(\pi,\rho,\sigma\in{\cal P}\left(I\right)\), then \(\#\left(\pi\vee\rho\right)-\#\left(\pi\vee\rho\vee\sigma\right)\leq\#\left(\pi\right)-\#\left(\pi\vee\sigma\right)\).
\label{lemma: join}
\end{lemma}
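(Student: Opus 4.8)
The plan is to deduce the inequality from a monotonicity property of the ``defect'' function \(\tau\mapsto\#(\tau)-\#(\tau\vee\sigma)\). Concretely, I will show that for any fixed \(\gamma\in{\cal P}(I)\) the map \(\alpha\mapsto\#(\alpha)-\#(\alpha\vee\gamma)\) is non-increasing on the lattice \({\cal P}(I)\); that is, \(\alpha\preceq\beta\) implies \(\#(\alpha)-\#(\alpha\vee\gamma)\geq\#(\beta)-\#(\beta\vee\gamma)\). Granting this, I apply it with \(\alpha=\pi\), \(\beta=\pi\vee\sigma\) (note \(\pi\preceq\pi\vee\sigma\)), and \(\gamma=\rho\), which gives \(\#(\pi)-\#(\pi\vee\rho)\geq\#(\pi\vee\sigma)-\#(\pi\vee\sigma\vee\rho)\); rearranging this, and using \(\pi\vee\sigma\vee\rho=\pi\vee\rho\vee\sigma\), yields exactly \(\#(\pi\vee\rho)-\#(\pi\vee\rho\vee\sigma)\leq\#(\pi)-\#(\pi\vee\sigma)\).

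To prove the monotonicity statement it suffices, by chaining the inequality along a maximal chain from \(\alpha\) to \(\beta\), to treat the case in which \(\beta\) covers \(\alpha\); by the standard description of the covering relations in \({\cal P}(I)\), this means \(\beta\) is obtained from \(\alpha\) by merging exactly two of its blocks, say \(B_{1}\) and \(B_{2}\), into one, so that \(\#(\beta)=\#(\alpha)-1\). Thus the inequality to prove reduces to \(\#(\beta\vee\gamma)\geq\#(\alpha\vee\gamma)-1\). Letting \(\theta\in{\cal P}(I)\) be the partition whose only non-singleton block is \(B_{1}\cup B_{2}\), we have \(\beta=\alpha\vee\theta\), and hence \(\beta\vee\gamma=(\alpha\vee\gamma)\vee\theta\) by commutativity and associativity of the join.

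It remains to analyze the effect of joining \(\alpha\vee\gamma\) with \(\theta\). Since \(B_{1}\) and \(B_{2}\) are blocks of \(\alpha\) and \(\alpha\preceq\alpha\vee\gamma\), each of \(B_{1}\) and \(B_{2}\) lies inside a single block of \(\alpha\vee\gamma\); joining with \(\theta\) therefore merges those two blocks if they are distinct and changes nothing if they coincide, so \(\#(\beta\vee\gamma)=\#\bigl((\alpha\vee\gamma)\vee\theta\bigr)\in\{\#(\alpha\vee\gamma)-1,\ \#(\alpha\vee\gamma)\}\), which in particular gives \(\#(\beta\vee\gamma)\geq\#(\alpha\vee\gamma)-1\) and completes the proof. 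The only step requiring genuine care is this last one — checking that passing from \(\alpha\) to a covering \(\beta\) can lower the block count of the join with \(\gamma\) by at most one — and the observation that makes it routine is that a single covering step corresponds to one block-merge, encoded as the join with the elementary partition \(\theta\); everything else is bookkeeping with the lattice operations.
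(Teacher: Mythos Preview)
Your argument is correct. The monotonicity of \(\alpha\mapsto\#(\alpha)-\#(\alpha\vee\gamma)\) is established cleanly via covering relations, and the application with \(\alpha=\pi\), \(\beta=\pi\vee\sigma\), \(\gamma=\rho\) yields the desired inequality after rearrangement.

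The paper's proof and yours rest on the same underlying observation---that a single merge can decrease the block count of a join by at most one---but they organize the induction differently. The paper iterates over the blocks of \(\sigma\): each block of \(\sigma\) that merges \(k\) blocks of \(\pi\vee\rho\) must merge at least \(k\) blocks of the finer partition \(\pi\), so the total drop \(\#(\pi)-\#(\pi\vee\sigma)\) dominates \(\#(\pi\vee\rho)-\#(\pi\vee\rho\vee\sigma)\). You instead swap the roles of \(\rho\) and \(\sigma\): you climb a maximal chain from \(\pi\) to \(\pi\vee\sigma\) and track the effect of joining with \(\rho\) at each covering step. Your route packages the idea as a reusable monotonicity lemma on the partition lattice, which is slightly more abstract; the paper's route is more direct and keeps \(\sigma\) in its original role throughout. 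Both are equally short, and neither requires anything beyond the elementary fact about single block-merges.
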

\begin{proof}
If \(k\) blocks of \(\pi\vee\rho\) are joined by a block of \(\sigma\), then at least as many blocks of \(\pi\) are joined by that block.  Thus joining with a block of \(\sigma\) reduces the number of blocks of \(\pi\) by at least much as it reduces the number of blocks of \(\pi\vee\rho\).  Repeating for all the blocks of \(\sigma\), the result follows.
\end{proof}

\begin{definition}
We define an {\em integer partition} \(\lambda\) of integer \(n\geq 0\) as a list of integers (parts) \(\lambda_{1}\geq\cdots\geq\lambda_{k}>0\) such that \(\lambda_{1}+\cdots+\lambda_{k}=n\).
\end{definition}

If it is unclear from the context, we will specify whether we are referring to a set partition or an integer partition.  If it is not stated, a partition is a set partition.

\begin{definition}
We denote the set of permutations on set \(I\) by \(S\left(I\right)\) and the permutations on \(\left[n\right]\) by \(S_{n}\).

We will write permutations in cycle notation (sometimes omitting cycles with only one element).  We follow the convention that permutations are evaluated right-to-left.

We note that conjugation by another permutation substitutes each element in the cycle notation with its image under the conjugating permutation, and thus that conjugation by another permutation preserves the cycle structure of the permutation (and thus cycling the permutations in a product also preserves the cycle structure).

The orbits, or cycles, of a permutation form a partition of the domain set.  We will also write the number of cycles of a permutation \(\pi\) (that is, the number of blocks in this partition) as \(\#\left(\pi\right)\).  The domain will be specified if it is not clear from the context.  We will also use other partition notation, such as the join \(\vee\), implicitly applied to the orbits of permutations.  (This may be ambiguous in the definition of \(\mathrm{PM}_{\mathrm{nc}}\left(\pi\right)\), Definition~\ref{definition: spheres}.  While in \(\mathrm{PM}\left(\pi\right)\), \(\pi\) is interpreted as a partition, in \(\mathrm{PM}_{\mathrm{nc}}\left(\pi\right)\) it is a permutation.)

We will also consider \(\pi\in{\cal P}_{2}\left(I\right)\) to be permutations, where an element is taken to the element it is paired with.

If \(\pi\in S\left(I\right)\) and \(J\subseteq I\), we define the permutation induced by \(\pi\) on \(J\), which we denote by \(\left.\pi\right|_{J}\), by letting \(\left.\pi\right|_{J}\left(k\right)=\pi^{m}\), where \(m>0\) is the smallest integer such that \(\pi^{m}\in J\).  In cycle notation, this amounts to deleting elements which are not in \(J\).  If none of the cycles of \(\pi\) contain elements of both \(J\) and \(I\setminus J\), then this is simply the restriction of the function to \(J\).

We will frequently subscript functions by a permutation.  Let \(f\) be a function which is invariant under cycling of terms in a product (such as the trace of matrices over a commutative field).  Let \(I\subseteq\pm\left[n\right]\), and let \(\pi=\left(c_{1},\ldots,c_{n_{1}}\right)\cdots\left(c_{n_{m-1}},\ldots,c_{n_{m}}\right)\in S\left(I\right)\).  Then
\[f_{\pi}\left(x_{1},\ldots,x_{n}\right):=f\left(x_{c_{1}}\cdots x_{c_{n_{1}}}\right)\cdots f\left(x_{c_{n_{m-1}}}\cdots x_{c_{n_{m}}}\right)\textrm{.}\]
\end{definition}

Multiplying (left or right) a permutation \(\pi\) by a transposition (a permutation of the form \(\left(a,b\right)\)) increases \(\#\left(\pi\right)\) by \(1\) if \(a\) and \(b\) connect two cycles of \(\pi\), and decreases \(\#\left(\pi\right)\) by \(1\) if they do not.

We use permutations to encode graphs in surfaces (see Example~\ref{example: map}).  Faces, edges, or vertices are encoded as a permutation by enumerating the ends of the edges (or hyperedges) which appear around each face (or hyperedge or vertex) in cyclic order in a cycle of the permutation.  See \cite{MR0404045, MR2036721} for more detail on the construction, \cite{MR1813436, 2012arXiv1204.6211R} for more on the construction in the unoriented case, and \cite{MR2005857, MR2480549, 2014arXiv1412.0646R} for more on the quaternionic case.

For any surface, we may construct an orientable two-sheeted covering space by letting points be close in the covering space if they are close in the base space by an orientation preserving path.  For more on this construction, see \cite{MR1867354}, Section~3.3.  We use \(k\) and \(-k\) to represent the two preimages of \(k\).  Objects in the covering space should be consistent with the covering map, as described in the following definition:

\begin{definition}
We will denote the function \(k\mapsto -k\) by \(\delta\).

We will denote by \(\mathrm{PM}\left(I\right)\) the set of all permutations \(\pi\) on \(\pm I\) such that \(\delta\pi\delta=\pi^{-1}\) and such that no cycle of \(\pi\) contains both \(k\) and \(-k\).  We denote \(\mathrm{PM}\left(\left[n\right]\right)\) by \(\mathrm{PM}\left(n\right)\).

For \(\rho\in{\cal P}\left(I\right)\), we denote by \(\mathrm{PM}\left(\rho\right)\) the set of \(\pi\in\mathrm{PM}\left(I\right)\) such that every cycle of \(\pi\) is contained in a block of \(\pm\rho\).

If \(\pi\in\mathrm{PM}\left(I\right)\), the cycles are paired: for each cycle there is another where the elements with the signs reversed appear in reverse cyclic order.  We choose the cycle from each pair in which the element of smallest absolute value (or infinity) appears as a positive integer.  We define \(\mathrm{FD}\left(\pi\right)\) as the permutation which is the product of these cycles, taking the domain as the set of elements appearing in those cycles.  We will also use this notation to refer to the domain as a set.
\end{definition}

It is possible to calculate the permutation representing the vertices from the permutations representing the faces and the hyperedges:
\begin{definition}
If \(\varphi_{+}\in S\left(I\right)\) and \(\alpha\in\mathrm{PM}\left(I\right)\), we let \(\varphi_{-}:=\delta\varphi_{+}\delta\).  We define
\[K\left(\varphi_{+},\alpha\right):=\varphi_{+}^{-1}\alpha^{-1}\varphi_{-}\textrm{.}\]
In \cite{MR3217665} it is shown that \(K\left(\varphi_{+},\alpha\right)\in\mathrm{PM}\left(I\right)\).

Letting \(\varphi:=\varphi_{+}\varphi_{-}^{-1}\), we define the Euler characteristic:
\[\chi\left(\varphi_{+},\alpha\right):=\#\left(\varphi\right)/2+\#\left(\alpha\right)/2+\#\left(K\left(\varphi,\alpha\right)\right)/2-\left|I\right|\textrm{.}\]
We note that while \(K\left(\varphi_{+},\alpha\right)\) depends on which cycles of \(\varphi\) come from \(\varphi_{+}\) and which from \(\varphi_{-}\), the number of cycles, and hence the Euler characteristic, does not, so we may write \(\chi\left(\varphi,\alpha\right)\).
\end{definition}

\begin{example}

\begin{figure}
\centering
\input{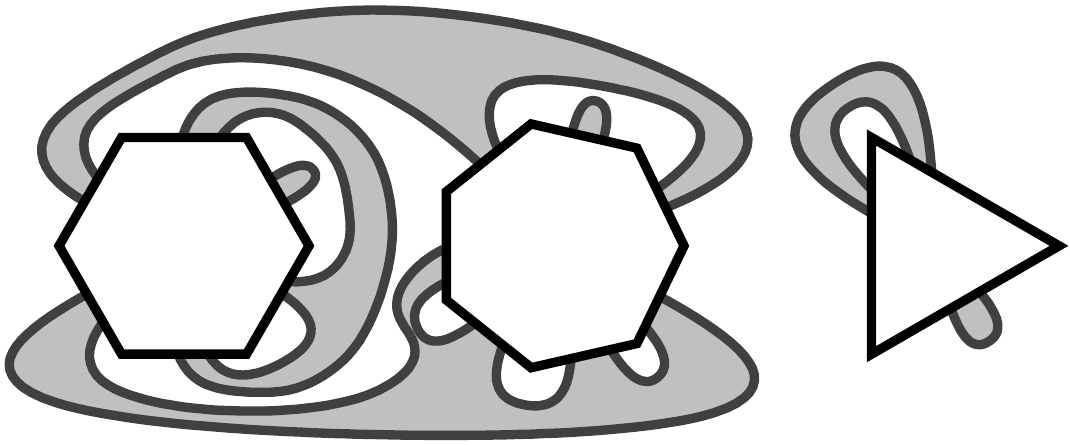_t}
\caption{A hypermap on three faces.  The middle face is shown from behind.}
\label{figure: map}
\end{figure}

In Figure~\ref{figure: map}, an example of a hypermap is shown.  The middle face is shown from behind, so the integers are negative and appear in reversed (clockwise) order.  The faces are represented by the permutation
\[\varphi_{+}=\left(1,2,3,4,5,6\right)\left(7,8,9,10,11,12,13\right)\left(14,15,16\right)\textrm{.}\]
Permutation \(\varphi_{+}\) gives us the fronts of the faces, while permutation
\begin{multline*}
\varphi_{-}^{-1}:=\delta\varphi_{+}\delta\\=\left(-6,-5,-4,-3,-2,-1\right)\left(-13,-12,-11,-10,-9,-8,-7\right)\left(-16,-15,-14\right)
\end{multline*}
gives the backs of the faces.

The hyperedges are represented by permutation
\begin{multline*}
\alpha=\left(1\right)\left(-1\right)\left(2,6,5\right)\left(-5,-6,-2\right)\left(3,-11,-13\right)\left(13,11,-3\right)\\\left(4,-7,-8,-9,-10\right)\left(10,9,8,7,-4\right)\left(12\right)\left(-12\right)\left(14,15\right)\left(-15,-14\right)\left(16\right)\left(-16\right)\textrm{.}
\end{multline*}
(We use the convention that all elements (faces, hyperedges, vertices) are oriented counter-clockwise internally, which is slightly different from the conventions used in some of the cited references.)

The integer representing the corner of a face appearing in a given vertex is the integer labelling the edge of which it is the counter-clockwise vertex (so the clockwise vertex if the face is viewed from behind).  We calculate that the vertices will be represented by permutation
\begin{multline*}
K\left(\varphi_{+},\alpha\right)=\varphi_{+}^{-1}\alpha^{-1}\varphi_{-}
\\=\left(1,6\right)\left(-6,-1\right)\left(2,4,-10\right)\left(10,-4,-2\right)\left(3,-13\right)\left(13,-3\right)\left(5\right)\left(-5\right)\left(7\right)\left(-7\right)\\\left(8\right)\left(-8\right)\left(9\right)\left(-9\right)\left(11,12\right)\left(-12,-11\right)\left(14\right)\left(-14\right)\left(15,16\right)\left(-16,-15\right)\textrm{.}
\end{multline*}

We compute that the Euler characteristic is
\begin{multline*}
\chi\left(\varphi,\alpha\right)=\#\left(\varphi\right)/2+\#\left(\alpha\right)/2+\#\left(K\left(\varphi,\alpha\right)\right)/2-\left|I\right|
\\=3+7+10-16=4\textrm{.}
\end{multline*}
This is consistent with the intuition that the diagram is noncrossing, and thus each of the two connected components is a sphere.
\label{example: map}
\end{example}

The Euler characteristic is at most \(2\) times the number of connected components:
\begin{lemma}
\label{lemma: spheres}
Let \(\varphi,\alpha\in\mathrm{PM}\left(I\right)\).  Then
\[\chi\left(\varphi,\alpha\right)\leq 2\#\left(\pm\varphi\vee\alpha\right)\textrm{.}\]
\end{lemma}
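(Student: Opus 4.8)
The plan is to read the inequality as the statement that each connected component of the (possibly non\nobreakdash-orientable) surface encoded by $(\varphi,\alpha)$ has Euler characteristic at most $2$, and to prove it by induction on $\left|I\right|$ for the displayed inhomogeneous form, reducing via ``splitting'' and ``pruning'' moves on $\alpha$. Throughout one takes $\varphi_{+}=\mathrm{FD}(\varphi)$ and $\varphi_{-}=\delta\varphi_{+}\delta$, and uses that $\#(\varphi)$, $\#(\alpha)$, $\#(K(\varphi_{+},\alpha))$ and $\left|I\right|$ are additive over the $\delta$-invariant blocks of $\pm\varphi\vee\alpha$ (each such block is a union of cycles of $\varphi_{+}$, of $\varphi_{-}$ and of $\alpha$, hence of $K(\varphi_{+},\alpha)=\varphi_{+}^{-1}\alpha^{-1}\varphi_{-}$, and restriction commutes with $K(\cdot,\cdot)$).

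First I would dispose of the base cases. If $\left|I\right|=1$ then, since no cycle may contain $k$ and $-k$, each of $\varphi$, $\alpha$, $K(\varphi_{+},\alpha)$ is the identity of the two-point set $\pm I$, so $\chi(\varphi,\alpha)=1+1+1-1=2=2\#(\pm\varphi\vee\alpha)$. If $\alpha=0_{\pm I}$, then $K(\varphi_{+},\alpha)=\varphi_{+}^{-1}\varphi_{-}$ has disjoint supports $\mathrm{FD}(\varphi)$ and its complement, so $\#(K(\varphi_{+},\alpha))=\#(\varphi)=2\#(\pm\varphi)$ and hence $\chi(\varphi,\alpha)=2\#(\pm\varphi)=2\#(\pm\varphi\vee\alpha)$. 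So the inequality holds, with equality, in these cases.

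For the inductive step ($\left|I\right|\geq 2$, $\alpha\neq 0_{\pm I}$) I would pick a cycle $c$ of $\alpha$ of length $\geq 2$ together with its $\delta$-partner $\bar c$, and split $c$ at two adjacent elements, simultaneously splitting $\bar c$ in the way forced by $\delta(\,\cdot\,)\delta=(\,\cdot\,)^{-1}$; this produces $\alpha''\in\mathrm{PM}(I)$ with $\#(\alpha'')=\#(\alpha)+2$ and $\alpha=\alpha''\gamma$, where $\gamma$ is a product of two disjoint transpositions. The identity
\[K(\varphi_{+},\alpha)=\varphi_{+}^{-1}\gamma^{-1}(\alpha'')^{-1}\varphi_{-}=(\varphi_{+}^{-1}\gamma\varphi_{+})\,K(\varphi_{+},\alpha'')\]
and the fact that $\varphi_{+}^{-1}\gamma\varphi_{+}$ is again a product of two disjoint transpositions (so that left multiplication by it changes the number of cycles by $-2$, $0$, or $2$) give $\#(K(\varphi_{+},\alpha))\leq\#(K(\varphi_{+},\alpha''))+2$, and together with $\#(\alpha)=\#(\alpha'')-2$ this yields $\chi(\varphi,\alpha)\leq\chi(\varphi,\alpha'')$. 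Since $\alpha''$ has a fixed point $k$ (one of the split-off elements), I would then \emph{prune} it: splice $k$ and $-k$ out of their cycles in $\varphi$ and in $K(\varphi_{+},\alpha'')$ and delete $k$ from $I$, obtaining $(\varphi',\alpha')$ in $\mathrm{PM}(I\setminus\{k\})$ with $\left|I\setminus\{k\}\right|=\left|I\right|-1$, with $K(\varphi'_{+},\alpha')$ the corresponding splicing of $K(\varphi_{+},\alpha'')$, and (after a short cycle-count check) with $\chi(\varphi',\alpha')=\chi(\varphi,\alpha'')$ and $\#(\pm\varphi'\vee\alpha')=\#(\pm\varphi\vee\alpha'')$. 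The inductive hypothesis for $(\varphi',\alpha')$ then gives $\chi(\varphi,\alpha'')\leq 2\#(\pm\varphi\vee\alpha'')$.

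The main obstacle is relating $\#(\pm\varphi\vee\alpha'')$ to $\#(\pm\varphi\vee\alpha)$. Because the moves at $k$ and $-k$ are ``localized'' at the $\pm$-linked points $k,-k$, refining $\alpha$ to $\alpha''$ splits at most one block of $\pm\varphi\vee\alpha$, so $\#(\pm\varphi\vee\alpha'')\in\{\#(\pm\varphi\vee\alpha),\,\#(\pm\varphi\vee\alpha)+1\}$. If it is $\#(\pm\varphi\vee\alpha)$, the chain $\chi(\varphi,\alpha)\leq\chi(\varphi,\alpha'')\leq 2\#(\pm\varphi\vee\alpha'')=2\#(\pm\varphi\vee\alpha)$ finishes the induction. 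The delicate case is when the split disconnects the surface: then one must sharpen the estimate on $\#(K(\varphi_{+},\alpha))$ from $\leq\#(K(\varphi_{+},\alpha''))+2$ to $\leq\#(K(\varphi_{+},\alpha''))-2$, which gives $\chi(\varphi,\alpha)\leq\chi(\varphi,\alpha'')-2\leq 2\#(\pm\varphi\vee\alpha'')-2=2\#(\pm\varphi\vee\alpha)$. To obtain the sharpening I would argue that in the disconnecting case the cycle $c$ is the unique connection between the two resulting $\delta$-invariant components $A$ and $B$, which forces the split-off point into one of them and the rest of $c$ into the other; tracing $\gamma$ through the conjugation by $\varphi_{+}$ and using the $\delta$-symmetry of $K(\varphi_{+},\alpha'')$ (which prevents the two transpositions of $\varphi_{+}^{-1}\gamma\varphi_{+}$ from cancelling one another), both transpositions then join an $A$-cycle of $K(\varphi_{+},\alpha'')$ to a $B$-cycle, so the cycle count drops by exactly $2$. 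Making this last step precise — the point where the quaternionic sign-doubling does the real work — is the only nontrivial part; all other steps are routine bookkeeping with cycle counts, the identity displayed above, and Lemma~\ref{lemma: join} to control how the joins change under the two moves.
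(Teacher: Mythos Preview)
The paper does not actually prove this lemma; it simply writes ``See \cite{MR3217665} for a proof.'' So there is no in-paper argument to compare against, and your proposal stands as an independent proof.

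Your inductive split-and-prune scheme is sound, and you have correctly isolated the disconnecting case as the only genuine difficulty. Your appeal to ``$\delta$-symmetry'' there is a bit hand-wavy, but it can be made precise as follows. Write the split so that $a_{1}$ (and hence $-a_{1}$) becomes a fixed point of $\alpha''$, and let $\gamma=(a_{1},a_{m})(-a_{1},-a_{2})$, so $\varphi_{+}^{-1}\gamma\varphi_{+}=(\varphi_{+}^{-1}(a_{1}),\varphi_{+}^{-1}(a_{m}))(-a_{1},-a_{2})$. Since $a_{1}$ is fixed by $\alpha''$ one has $K(\varphi_{+},\alpha'')(a_{1})=\varphi_{+}^{-1}(\alpha'')^{-1}\varphi_{-}(a_{1})=\varphi_{+}^{-1}(a_{1})$, so $a_{1}$ and $\varphi_{+}^{-1}(a_{1})$ lie in the \emph{same} cycle $C_{A}$ of $K(\varphi_{+},\alpha'')$. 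Because $K(\varphi_{+},\alpha'')\in\mathrm{PM}(I)$, this cycle cannot contain $-a_{1}$. In the disconnecting situation $a_{1},-a_{1}\in A$ and $a_{2},\ldots,a_{m},-a_{2},\ldots,-a_{m}\in B$; the first transposition therefore merges the $A$-cycle $C_{A}$ with the $B$-cycle through $\varphi_{+}^{-1}(a_{m})$, and since $-a_{1}\notin C_{A}$ (and $-a_{1}\in A$, so $-a_{1}$ is not in any $B$-cycle either) the second transposition $(-a_{1},-a_{2})$ again joins two distinct cycles. This yields $\#\bigl(K(\varphi_{+},\alpha)\bigr)=\#\bigl(K(\varphi_{+},\alpha'')\bigr)-2$ exactly, and the induction closes.

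One small bookkeeping point worth making explicit in your pruning step: the equalities $\chi(\varphi',\alpha')=\chi(\varphi,\alpha'')$ and $\#(\pm\varphi'\vee\alpha')=\#(\pm\varphi\vee\alpha'')$ hold only when $a_{1}$ is not alone in its $\varphi_{+}$-cycle; when it is alone, both quantities drop (by $2$ and $1$ respectively), so the desired inequality still transfers. Note also that ``$a_{1}$ alone in $\varphi_{+}$'' automatically forces the disconnecting case, since $\{a_{1},-a_{1}\}$ then becomes an isolated block of $\pm\varphi\vee\alpha''$; the two case splits therefore interact but compatibly.
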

See \cite{MR3217665} for a proof.

\begin{definition}
Let \(\varphi_{+}\in S\left(I\right)\), with \(\varphi\in\mathrm{PM}\left(I\right)\) defined as above.  We denote the set of \(\alpha\in\mathrm{PM}\left(I\right)\) such that \(\chi\left(\varphi_{+},\alpha\right)=2\#\left(\pm\varphi_{+}\vee\alpha\right)\) by \(\mathrm{PM}_{\mathrm{nc}}\left(\varphi_{+}\right)\) or by \(\mathrm{PM}_{\mathrm{nc}}\left(\varphi\right)\).
\label{definition: spheres}
\end{definition}

In some contexts, we may characterize \(\alpha\in\mathrm{PM}_{\mathrm{nc}}\left(\varphi\right)\) in terms of crossing conditions.  In particular, we may do so when \(\#\left(\varphi_{+}\right)\) is equal to \(1\) or \(2\).

\begin{definition}
Let \(\varphi,\alpha\in S\left(I\right)\) where \(\#\left(\varphi\right)=1\).

We say that \(\alpha\) is {\em disc nonstandard} on \(\varphi\) if there are distinct \(a,b,c\in I\) such that \(\left.\varphi\right|_{\left\{a,b,c\right\}}=\left.\alpha\right|_{\left\{a,b,c\right\}}=\left(a,b,c\right)\).

We say that \(\alpha\) is {\em disc crossing} on \(\varphi\) if there are distinct \(a,b,c,d\in I\) such that \(\left.\varphi\right|_{\left\{a,b,c,d\right\}}=\left(a,b,c,d\right)\) and \(\left.\alpha\right|_{\left\{a,b,c,d\right\}}=\left(a,c\right)\left(b,d\right)\).

We say that \(\alpha\) is {\em disc noncrossing} on \(\varphi\) if it is neither disc nonstandard or disc crossing.  We denote the set of \(\alpha\in S\left(I\right)\) which are disc noncrossing on \(\varphi\) by \(S_{\mathrm{disc-nc}}\left(\varphi\right)\).
\end{definition}

\begin{definition}
\label{definition: annular noncrossing}
Let \(\varphi,\alpha\in S\left(I\right)\), where \(\#\left(\varphi\right)=2\).  We say that \(\alpha\) is {\em annular nonstandard} on \(\varphi\) if it satisfies at least one of the following two conditions:
\begin{enumerate}
  \item There exist distinct elements \(a,b,c\in I\) such that \(\left.\varphi\right|_{\left\{a,b,c\right\}}=\left.\alpha\right|_{\left\{a,b,c\right\}}=\left(a,b,c\right)\).
  \item There exist distinct elements \(a,b,c,d\in I\) such that \(\left.\varphi\right|_{\left\{a,b,c,d\right\}}=\left(a,b\right)\left(c,d\right)\) and \(\left.\alpha\right|_{\left\{a,b,c,d\right\}}=\left(a,c,b,d\right)\).
\end{enumerate}

For \(x,y\in I\) in distinct cycles of \(\varphi\), we define
\[\lambda_{x,y}:=\left(\varphi\left(x\right),\varphi^{2}\left(x\right)\ldots,\varphi^{-1}\left(x\right),\varphi\left(y\right),\varphi^{2}\left(y\right),\ldots,\varphi^{-1}\left(y\right)\right)\in S\left(I\setminus\left\{x,y\right\}\right)\textrm{.}\]
We say that \(\alpha\) is {\em annular crossing} on \(\varphi\) if it satisfies at least one of the following three conditions:
\begin{enumerate}
  \item There exist distinct elements \(a,b,c,d\in I\) such that \(\left.\varphi\right|_{\left\{a,b,c,d\right\}}=\left(a,b,c,d\right)\) and \(\left.\alpha\right|_{\left\{a,b,c,d\right\}}=\left(a,c\right)\left(b,d\right)\).
  \item There exist distinct elements \(a,b,c,x,y\in I\) such that \(x\) and \(y\) are in distinct orbits of \(\varphi\) and such that \(\left.\lambda_{x,y}\right|_{\left\{a,b,c\right\}}=\left(a,b,c\right)\) and \(\left.\alpha\right|_{\left\{a,b,c,x,y\right\}}=\left(a,b,c\right)\left(x,y\right)\).
  \item There exist distinct elements \(a,b,c,d,x,y\in I\) where \(x\) and \(y\) belong to distinct cycles of \(\varphi\) and \(\left.\lambda_{x,y}\right|_{\left\{a,b,c,d\right\}}=\left(a,b,c,d\right)\) and \(\left.\alpha\right|_{\left\{a,b,c,d,x,y\right\}}=\left(a,c\right)\left(b,d\right)\left(x,y\right)\).
\end{enumerate}

We say that \(\alpha\) is {\em annular noncrossing} on \(\varphi\) if it does not satisfy any of these conditions.  We denote the set of \(\alpha\in S\left(I\right)\) which are annular noncrossing on \(\varphi\) by \(S_{\mathrm{ann-nc}}\left(\varphi\right)\).
\end{definition}

\begin{lemma}
Let \(\varphi_{+}\in S\left(I\right)\) with \(\#\left(\varphi_{+}\right)=1\), \(\varphi\in\mathrm{PM}\left(I\right)\) as above, and let \(\alpha\in\mathrm{PM}\left(I\right)\).  Then \(\chi\left(\varphi,\alpha\right)=2\) if and only if \(\alpha\) does not connect the cycles of \(\varphi\), and \(\left.\alpha\right|_{I}\) is disc noncrossing on \(\varphi_{+}\).

Let \(\varphi_{+}\in S\left(I\right)\) with \(\#\left(\varphi\right)=2\), and let \(\alpha\in\mathrm{PM}\left(I\right)\) such that \(\pm\varphi\vee\alpha=1_{\pm I}\).  Then \(\chi\left(\varphi,\alpha\right)=2\) if and only if \(\alpha\) connects one orbit \(J\) of \(\varphi\) to exactly one other orbit of \(\varphi\) which is not \(-J\) (call it \(K\)), and \(\left.\alpha\right|_{J\cup K}\) is annular noncrossing on \(\left.\varphi\right|_{J\cup K}\).
\end{lemma}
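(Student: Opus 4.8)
The plan is to treat the two assertions in parallel, extracting in each case the content of the lemma as a description of the equality case of Lemma~\ref{lemma: spheres} and then reducing that to classical facts about the symmetric group by passing to the ``downstairs'' picture. In the first assertion $\pm\varphi=1_{\pm I}$ holds automatically, and in the second $\pm\varphi\vee\alpha=1_{\pm I}$ is assumed, so in both situations Lemma~\ref{lemma: spheres} already gives $\chi\left(\varphi,\alpha\right)\leq 2$, and what must be shown is exactly that $\chi\left(\varphi,\alpha\right)=2$ (equivalently $\alpha\in\mathrm{PM}_{\mathrm{nc}}\left(\varphi\right)$) is equivalent to the stated combinatorial condition. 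Throughout I would use that all the relevant quantities are conjugation-invariant, so that $\varphi_{+}$ may be normalized, and that an element of $\mathrm{PM}\left(I\right)$ is recoverable from its fundamental domain, so that $\chi$ can be rewritten purely in terms of cycle counts of honest permutations of subsets of $I$.

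For the disc case ($\#\left(\varphi_{+}\right)=1$), normalize $\varphi_{+}=\left(1,\ldots,n\right)$. The first step is to show that if $\alpha$ connects the two cycles $I$ and $-I$ of $\varphi$ then $\chi\left(\varphi,\alpha\right)<2$ --- the statement that an unoriented planar map on a single face is in fact orientable --- which I would obtain either by a surgery argument built on Lemma~\ref{lemma: join} (cutting along an $\alpha$-cycle meeting both $I$ and $-I$ does not decrease $\chi$ but destroys the connection, so the original $\chi$ was at most $1$) or by a parity count using that $\chi$ is even precisely when $\alpha$ separates $I$ from $-I$. When $\alpha$ does separate them, $\delta\alpha\delta=\alpha^{-1}$ forces $\alpha$ to be determined by $\alpha_{+}:=\left.\alpha\right|_{I}\in S\left(I\right)$ (with $\left.\alpha\right|_{-I}=\delta\alpha_{+}^{-1}\delta$), and a direct computation gives $\left.K\left(\varphi_{+},\alpha\right)\right|_{I}=\varphi_{+}^{-1}\alpha_{+}^{-1}$, whence
\[\chi\left(\varphi,\alpha\right)=1+\#\left(\alpha_{+}\right)+\#\left(\alpha_{+}\varphi_{+}\right)-n\textrm{,}\]
so $\chi=2$ is equivalent to $\#\left(\alpha_{+}\right)+\#\left(\alpha_{+}\varphi_{+}\right)=n+1$. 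Since $\varphi_{+}$ is an $n$-cycle this is the genus-zero bound for a one-face map, and the permutations meeting it are classically the $\varphi_{+}$-noncrossing permutations, whose two forbidden local patterns transcribe, in terms of induced permutations, precisely into ``disc nonstandard'' and ``disc crossing''; hence $\chi=2$ if and only if $\alpha_{+}\in S_{\mathrm{disc-nc}}\left(\varphi_{+}\right)$.

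For the annular case, where $\varphi_{+}$ has two cycles and hence the covering permutation $\varphi$ has four orbits $J_{1},J_{2},-J_{1},-J_{2}$ (and $\pm\varphi\vee\alpha=1_{\pm I}$ is assumed), I would analyze how $\alpha$ joins these orbits. The hypothesis forces $\pm J_{1}$ to be joined to $\pm J_{2}$, and $\delta\alpha\delta=\alpha^{-1}$ makes the joining pattern $\delta$-invariant, so either $\alpha$ joins $\left\{J_{1},J_{2}\right\}$ and $\left\{-J_{1},-J_{2}\right\}$ with no joining between the two, or it joins $\left\{J_{1},-J_{2}\right\}$ and $\left\{-J_{1},J_{2}\right\}$ likewise, or it joins all four orbits (in particular joining some $J_{i}$ to $-J_{i}$ or to both of $\pm J_{j}$). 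A surgery/parity argument in the spirit of Lemma~\ref{lemma: join} should rule out the last possibility when $\chi=2$, leaving exactly the configurations in which one orbit $J$ of $\varphi$ is joined to a single other orbit $K\neq -J$; restricting to $J\cup K$ and passing to fundamental domains then reduces $\chi\left(\varphi,\alpha\right)$ to the two-face analogue of the identity above, so $\chi=2$ becomes the genus-zero bound for a two-face map on the $2$-cycle permutation $\left.\varphi\right|_{J\cup K}$. The permutations meeting that bound are the annular noncrossing ones, and the annular nonstandard and annular crossing patterns of Definition~\ref{definition: annular noncrossing} are exactly the characterization of \cite{MR2052516}, with the unoriented variant of \cite{2012arXiv1204.6211R, MR3217665} covering the sub-case $K=-J_{j}$. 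I expect the main obstacle to lie in this annular step, and within it two points: first, ruling out the over-connected orbit patterns purely combinatorially (this is where Lemma~\ref{lemma: join} must do the real work, since there is no convenient surface to draw); and second, checking that ``annular noncrossing on $\left.\varphi\right|_{J\cup K}$'' is the correct downstairs condition in both the orientable sub-case $K=J_{j}$ and the non-orientable sub-case $K=-J_{j}$, i.e. that the single list of crossing conditions captures both.
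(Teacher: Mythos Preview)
The paper does not actually prove this lemma: immediately after the statement it writes ``See \cite{2012arXiv1204.6211R} for the proofs'' and points to \cite{MR2052516} for the orientable annular characterization. So there is no in-paper argument to compare against; your sketch is effectively a reconstruction of the content of those references.

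Your outline is sound and follows the standard route taken there. The disc reduction to the identity \(\#\left(\alpha_{+}\right)+\#\left(\alpha_{+}\varphi_{+}\right)=n+1\) is correct, and this is exactly the Biane-type genus-zero characterization whose two forbidden local patterns become the disc-nonstandard and disc-crossing conditions. In the annular case your orbit-pattern trichotomy is the right decomposition, and the two points you flag as obstacles---ruling out the over-connected configuration and verifying that the single list in Definition~\ref{definition: annular noncrossing} handles both the orientable sub-case \(K=J_{j}\) and the non-orientable sub-case \(K=-J_{j}\)---are precisely the content supplied by \cite{2012arXiv1204.6211R} and \cite{MR2052516}. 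One small caution: the ``parity count'' you offer as an alternative in the disc case (that \(\chi\) is even iff \(\alpha\) separates \(I\) from \(-I\)) is itself equivalent to orientability of the quotient surface and so is not lighter than the surgery argument; if you want the step to be self-contained, the surgery via Lemma~\ref{lemma: join} is the cleaner option.
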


See \cite{2012arXiv1204.6211R} for the proofs.  For more on the disc-noncrossing conditions, see, e.g. \cite{MR2266879, MR0404045, MR2036721}.  The noncrossing conditions for the annulus in the orientable case are established in \cite{MR2052516}.

The following lemma can be thought of as the additivity of the Euler characteristic over disconnected components and the computation of its value on connected sums.

\begin{lemma}
Let \(\pi=\left\{V_{1},\ldots,V_{m}\right\}\in{\cal P}\left(I\right)\) be a partition of \(I\) with \(m\) blocks, and let \(\alpha\in\mathrm{PM}\left(\pi\right)\).

Let \(\varphi_{+}\in S\left(I\right)\).  If \(\varphi_{+}\preceq\pi\), then
\[\chi\left(\varphi,\alpha\right)=\chi\left(\left.\varphi\right|_{V_{1}},\left.\alpha\right|_{V_{1}}\right)+\cdots+\chi\left(\left.\varphi\right|_{V_{m}},\left.\alpha\right|_{V_{m}}\right)\textrm{.}\]
If \(\varphi_{+}\) has one cycle, then
\[\chi\left(\varphi,\alpha\right)=\chi\left(\left.\varphi\right|_{V_{1}},\left.\alpha\right|_{V_{1}}\right)+\cdots+\chi\left(\left.\varphi\right|_{V_{m}},\left.\alpha\right|_{V_{m}}\right)-2\left(m-1\right)\textrm{.}\]
\label{lemma: chi}
\end{lemma}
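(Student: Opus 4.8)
The plan is to prove both identities by expanding $\chi(\varphi,\alpha)=\#(\varphi)/2+\#(\alpha)/2+\#(K(\varphi,\alpha))/2-|I|$ and tracking each of the four summands under restriction of all the relevant permutations to the blocks $\pm V_i$ of $\pm\pi$; I would treat the two cases separately.

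For the first identity, suppose $\varphi_+\preceq\pi$. Then every cycle of $\varphi_+$ lies in a single block of $\pi$, so every cycle of $\varphi_-=\delta\varphi_+\delta$, and of $\varphi=\varphi_+\varphi_-^{-1}$, lies in a single block of $\pm\pi$; since $\alpha\in\mathrm{PM}(\pi)$ shares this property, so does $K(\varphi_+,\alpha)=\varphi_+^{-1}\alpha^{-1}\varphi_-$, being a product of permutations each of which preserves every $\pm V_i$. Restricting $\varphi$, $\alpha$ and $K(\varphi_+,\alpha)$ to $\pm V_i$ gives $\varphi|_{\pm V_i}$, $\alpha|_{\pm V_i}$ and, directly from the formula for $K$, $K(\varphi_+|_{V_i},\alpha|_{\pm V_i})$. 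Hence $\#(\varphi)=\sum_i\#(\varphi|_{\pm V_i})$, $\#(\alpha)=\sum_i\#(\alpha|_{\pm V_i})$, $\#(K(\varphi_+,\alpha))=\sum_i\#(K(\varphi_+|_{V_i},\alpha|_{\pm V_i}))$ and $|I|=\sum_i|V_i|$, so each of the four terms of $\chi$ is additive over the blocks, which is the first identity.

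For the second identity I would reduce to $m=2$ by induction on $m$. With $W:=V_2\cup\cdots\cup V_m$, the induced permutation $\varphi_+|_W$ is again a single cycle and $\alpha|_{\pm W}$ lies in $\mathrm{PM}$ of the induced partition $\{V_2,\dots,V_m\}$ of $W$, so applying the $m=2$ case to $I=V_1\sqcup W$ and the inductive hypothesis to $W$ yields the general formula. For $m=2$, write $I=V\sqcup W$ and $\psi_+:=(\varphi_+|_V)(\varphi_+|_W)$. Because $\psi_+$ and $\alpha$ both preserve $\pm V$ and $\pm W$, one gets $K(\psi_+,\alpha)=K(\varphi_+|_V,\alpha|_{\pm V})\,K(\varphi_+|_W,\alpha|_{\pm W})$ with the two factors acting on the disjoint sets $\pm V$, $\pm W$, so $\#(K(\psi_+,\alpha))$ is the sum of the two restricted counts; and since the single cycle of $\varphi_+$, and its mirror, each split into two cycles, expanding $\chi$ as in the first case reduces the $m=2$ statement to the purely combinatorial identity $\#(K(\varphi_+,\alpha))=\#(K(\psi_+,\alpha))-2$. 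Substituting $\psi_-=\delta\psi_+\delta$ and $\varphi_+=\psi_+h^{-1}$ with $h:=\varphi_+^{-1}\psi_+$ into the definitions gives the factorization $K(\varphi_+,\alpha)=h\,K(\psi_+,\alpha)\,\delta h^{-1}\delta$, in which $h$ acts only on $I$ and $\delta h^{-1}\delta$ only on $-I$. Since $h$ rewrites the single cycle of $\varphi_+$ in terms of the two cycles of $\psi_+$ — here one uses the feature, present in the applications, that each $V_i$ is traversed as one arc of $\varphi_+$ — it is a single transposition $(x,y)$ with $x\in V$, $y\in W$. As $x$ and $y$ lie in distinct cycles of $K(\psi_+,\alpha)$ (inside $\pm V$ and $\pm W$ respectively), left multiplication by $(x,y)$ merges two cycles and drops the count by one; the cycles of $K(\psi_+,\alpha)$ through $-x$ and $-y$ lie inside $\pm V$ and $\pm W$ and are untouched by that merge, so right multiplication by $\delta h^{-1}\delta=(-x,-y)$ again merges two distinct cycles and drops the count by a further one — a total decrease of exactly $2$.

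The step I expect to be the main obstacle is precisely this last one: getting the number of cycles of $K$ to drop by \emph{exactly} $2$ on passing from $\psi_+$ to $\varphi_+$ — that is, extracting the transposition factorization of $h$ from the single-cycle structure and confirming that each of the two mirrored transpositions joins two distinct cycles rather than acting within one. This is the quantitative content of the connected-sum computation referred to before the statement; Lemma~\ref{lemma: join}, applied on each of the two sheets, supplies the corresponding one-sided inequalities and is what one would fall back on if that arc structure is weakened.
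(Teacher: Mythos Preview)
Your argument for the first identity is correct and is precisely the paper's: once $\varphi_+\preceq\pi$, all three factors in $K(\varphi_+,\alpha)=\varphi_+^{-1}\alpha^{-1}\varphi_-$ preserve every $\pm V_i$, so each term of $\chi$ splits as a sum over the blocks.

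For the second identity there is a real gap. Your reduction to $m=2$ and the factorisation $K(\varphi_+,\alpha)=h\,K(\psi_+,\alpha)\,\delta h^{-1}\delta$ are fine, but the step ``$h$ is a single transposition'' relies on the extra hypothesis that each $V_i$ is a single contiguous arc of the cycle $\varphi_+$. This is not assumed in the lemma, and it is \emph{not} satisfied in the paper's own applications: in Proposition~\ref{proposition: cumulants} the lemma is invoked with $\zeta=(\infty,1,\ldots,n_m)$ and the partition $\hat\tau=\bigl\{\bigcup_{k\in V}[n_{k-1}+1,n_k]:V\in\tau\bigr\}$ for an arbitrary $\tau\in{\cal P}(m)$, whose blocks are generically unions of several arcs of $\zeta$. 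In the non-arc case $h=\varphi_+^{-1}\psi_+$ has longer cycles (for instance with $\varphi_+=(1,2,3,4)$, $V=\{1,3\}$, $W=\{2,4\}$ one gets $h=(1,2,3,4)$), and your ``two merges, drop of exactly $2$'' count no longer applies. The fallback you mention via Lemma~\ref{lemma: join} yields only an inequality, not the exact defect $2(m-1)$ the statement asserts.

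The paper's proof handles the general case directly: setting $\varphi_+':=\varphi_+|_{V_1}\cdots\varphi_+|_{V_m}$, it observes that $\varphi_+'\varphi_+^{-1}$ is a product of transpositions each of whose two points lie in \emph{different} blocks of $\pi$ (because $\varphi_+'$ and $\varphi_+$ disagree only where $\varphi_+$ jumps between blocks), hence in different orbits of $\alpha$ since $\alpha\in\mathrm{PM}(\pi)$. For any such transposition $\tau$ one has $K(\varphi_0\tau,\alpha)=\tau\,K(\varphi_0,\alpha)\,\delta\tau\delta$, and the two factors act on cycles lying in distinct $\pm V_i$, so $\chi(\varphi_0\tau,\alpha)-\chi(\varphi_0,\alpha)=2\bigl(\#(\varphi_0\tau)-\#(\varphi_0)\bigr)$ at each step; summing over all the transpositions gives $\chi(\varphi_+,\alpha)=\chi(\varphi_+',\alpha)+2\bigl(\#(\varphi_+)-\#(\varphi_+')\bigr)=\chi(\varphi_+',\alpha)-2(m-1)$, and the first identity applied to $\varphi_+'$ finishes. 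So the fix is not an induction on $m$, but allowing $h$ to be an arbitrary product of cross-block transpositions and arguing one transposition at a time.
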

\begin{proof}
If \(\varphi_{+}\preceq\pi\), then every cycle of \(K\left(\varphi,\alpha\right)\) is also contained within a block of \(\pi\).  The first part of the result follows.

Let \(\varphi_{0}\) be a permutation and let \(\tau=\left(a,b\right)\in S\left(I\right)\) be a transposition conncting two cycles of \(\varphi_{0}\) and two blocks of \(\pm\alpha\).  Then \(\left(a,b\right)\) connects two cycles of \(K\left(\varphi_{0},\alpha\right)\) (as above), so left multiplying \(\left(a,b\right)\) and right multiplying by \(\left(-a,-b\right)\) both reduce the number of cycles, that is \(\chi\left(\varphi_{0}\tau,\alpha\right)=\chi\left(\varphi_{0},\alpha\right)-2\).  Likewise, if \(\tau\) does not connect cycles of \(\varphi_{0}\), but connects blocks of \(\pm\alpha\), then \(\chi\left(\varphi_{0}\tau,\alpha\right)=\chi\left(\varphi_{0},\alpha\right)+2\).  Thus, if we can construct \(\varphi_{+}\) from \(\varphi_{0}\) by right-multiplication by a series of transpositions \(\tau_{1},\ldots,\tau_{m}\in S_{n}\) that do not connect blocks of \(\pm\alpha\), then
\begin{equation}
\label{formula: connected sum}
\chi\left(\varphi_{+},\alpha\right)=\chi\left(\varphi_{0},\alpha\right)+2\left(\#\left(\varphi_{+}\right)-\#\left(\varphi_{0}\right)\right)\textrm{.}
\end{equation}
Let \(\varphi_{+}\in S\left(I\right)\) have a single cycle.  We note that \(\varphi_{+}^{\prime}:=\left.\varphi_{+}\right|_{V_{1}}\cdots\left.\varphi_{+}\right|_{V_{m}}\) differs from \(\varphi_{+}\) only on \(k\in I\) where \(k\) belongs to a different block of \(\pi\) from \(\varphi_{+}\left(k\right)\), so \(\varphi_{+}^{\prime}\varphi_{+}^{-1}\) consists of single element cycles as well as cycles \(\left(c_{1},\ldots,c_{m}\right)\) where \(c_{i}\) always belongs to a different block of \(\pi\) than \(c_{i+1}\).  We can write such a cycle \(\left(c_{1},\ldots,c_{m}\right)=\left(c_{1},c_{2}\right)\left(c_{2},c_{3}\right)\cdots\left(c_{m-1},c_{m}\right)\) as a product of transpositions containing elements in different blocks of \(\pi\).  Then since \(K\left(\varphi_{+}^{\prime},\alpha\right)=\varphi_{+}^{\prime}\varphi_{+}^{-1}K\left(\varphi_{+},\alpha\right)\varphi_{-}\varphi_{-}^{\prime-1}\), the second part of the result follows from (\ref{formula: connected sum}).
\end{proof}

\subsection{Quaternions}

\begin{definition}
The {\em quaternions} \(\mathbb{H}\) are an algebra \(a+bi+cj+dk\), \(a,b,c,d\in\mathbb{R}\), with \(i,j,k\) satisfying \(ij=k\), \(jk=i\), and \(ki=j\).

The real part of a quaternion is
\[\mathrm{Re}\left(a+bi+cj+dk\right)=a\textrm{.}\]

The quaternion conjugate is
\[\overline{a+bi+cj+dk}=a-bi-cj-dk\textrm{.}\]
\end{definition}

The quaternions may be faithfully represented as \(2\times 2\) complex matrices
\[\left(\begin{array}{cc}a+bi&c+di\\-c+di&a-bi\end{array}\right)\textrm{.}\]
We will use \(1\) and \(-1\) to index the rows and columns.  We note that the real part is the normalized trace, and the quaternion conjugate is the (complex) conjugate transpose of the matrix.

\subsection{Matrices}

We will often use subscripts on matrices, \(A_{k}\).  We will often move the subscript to a superscript, in brackets: \(A^{\left(k\right)}\).  We will use a bracketed superscript of \(1\) to represent the matrix itself, and a bracketed superscript of \(-1\) to represent the (quaternion) conjugate transpose of the matrix.  These notations may be combined: \(A^{\left(-k\right)}=A_{k}^{\ast}\).

\begin{definition}
We denote the usual trace by \(\mathrm{Tr}\left(A\right)=\sum_{i=1}^{n}A_{ii}\) and the normalized trace by \(\mathrm{tr}=\frac{1}{n}\mathrm{Tr}.\)
\end{definition}

\begin{definition}
A bracket diagram is a product in which subexpressions may be enclosed in brackets.

We define the permutation \(\pi\in\mathrm{PM}\left(I_{\infty}\right)\) of a bracket diagram on terms with subscripts in \(I\) by placing a term with subscript \(\infty\) before and after the expression (outside any brackets).  For each \(k\in I_{\infty}\), let \(\pi\left(k\right)\) be the subscript of the term after the one subscripted \(k\), ignoring bracketed intervals.
\end{definition}

\begin{definition}
Let \(\pi,\rho\in\mathrm{PM}\left(\left[n\right]_{\infty}\right)\).  Then we define a quaternion-valued matrix in terms of its entries:
\begin{multline*}
\left[\mathrm{Re}_{\mathrm{FD}\left(\pi\right)}\mathrm{tr}_{\mathrm{FD}\left(\rho\right)}\left(A_{1},\ldots,A_{n}\right)\right]_{i_{\infty},i_{\rho\left(\infty\right)};h_{\infty},h_{\pi\left(\infty\right)}}
\\:=2^{-\left(\#\left(\pi\right)-1\right)}N^{-\left(\#\left(\rho\right)-1\right)}\sum_{\substack{i:\mathrm{FD}\left(\pi\right)\setminus\left\{\infty,\pm\rho\left(\infty\right)\right\}\rightarrow\left[N\right]\\h:\mathrm{FD}\left(\pi\right)\setminus\left\{\infty,\pi\left(\infty\right)\right\}\rightarrow\left\{1,-1\right\}}}\prod_{k\in\mathrm{FD}\left(\pi\right)\setminus\left\{\infty\right\}}A^{\left(k\right)}_{i_{k},i_{\rho\left(k\right)};h_{k},h_{\pi\left(k\right)}}\textrm{.}
\end{multline*}
\end{definition}
We note that there is only one \(\rho\in\mathrm{PM}\left(\left[n\right]_{\infty}\right)\) corresponding to a given \(\mathrm{FD}\left(\rho\right)\), so \(\rho\left(k\right)\) is well defined, even if \(k\) is not in \(\mathrm{FD}\left(\rho\right)\).  See \cite{2014arXiv1412.0646R} for how this generalizes the more standard trace along a permutation, as well as the proof of the following lemma:
\begin{lemma}
Consider a bracket diagram on a product of \(X_{k}\), \(k\in I\), where \(\mathrm{Re}\) and \(\mathrm{tr}\) are applied to bracketed intervals.  Let \(\pi\) be the permutation of the bracket diagram with only the brackets to which \(\mathrm{Re}\) is applied, and let \(\rho\) be the permutation of the bracket diagram with only the brackets to which \(\mathrm{tr}\) is applied.  Then the value of the bracket diagram is \(\mathrm{Re}_{\mathrm{FD}\left(\pi\right)}\mathrm{tr}_{\mathrm{FD}\left(\rho\right)}\left(X_{1},\ldots,X_{n}\right)\).
\end{lemma}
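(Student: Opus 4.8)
The plan is to pass to the faithful $2\times 2$ complex model of $\mathbb{H}$ and then to recognize the bracket diagram and the right-hand side as two expressions for one and the same sum over one and the same index set. First I would fix the model: an $N\times N$ quaternionic matrix $A$ becomes the complex array $\bigl(A_{i,i';h,h'}\bigr)$ with $i,i'\in[N]$ and $h,h'\in\{1,-1\}$, so that quaternionic matrix multiplication is ordinary complex matrix multiplication, $(AB)_{i,i'';h,h''}=\sum_{i',h'}A_{i,i';h,h'}\,B_{i',i'';h',h''}$; the quaternion conjugate transpose is the complex conjugate transpose, $A^{(-k)}_{i,i';h,h'}=\overline{A^{(k)}_{i',i;h',h}}$, consistent with the superscript convention $A^{(-k)}=A_k^{\ast}$; the quaternionic trace is $\mathrm{tr}(A)_{h,h'}=N^{-1}\sum_i A_{i,i;h,h'}$, still quaternion-valued; and the real part of a quaternion $q$ is $\frac12\sum_h q_{h,h}$. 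Consequently, $\mathrm{tr}$ applied to a bracketed interval closes that interval's $i$-index into a loop and contributes a factor $N^{-1}$, while $\mathrm{Re}$ applied to it closes its $h$-index into a loop and contributes a factor $\frac12$.

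Second I would recall the dictionary between a bracket diagram and its permutation in $\mathrm{PM}(I_\infty)$: each bracketed interval, together with the ambient expression delimited by the two formal $\infty$-terms, is a face of a hypermap; the matrix factors on its boundary are read off in cyclic order as a cycle of $\pi_+$; and the $\delta$-mirrored cycles record the backs of the faces and thereby the conjugate-transpose factors $X_k^{\ast}=X^{(-k)}$. The only combinatorial fact I need is that deleting the $\mathrm{tr}$-brackets leaves exactly the faces of $\pi$ and deleting the $\mathrm{Re}$-brackets leaves exactly the faces of $\rho$, and that the cycle of $\pi$ (resp.\ $\rho$) through $X^{(k)}$ sends $k$ to the label of the next factor lying in the same $\mathrm{Re}$-bracket (resp.\ $\mathrm{tr}$-bracket) once deeper nested brackets are skipped --- which is literally "ignoring bracketed intervals" in the definition. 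I would state this and refer to \cite{2014arXiv1412.0646R} for its justification.

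Third, with the model and the dictionary in place the proof is an induction on the number of brackets. In the base case there are no brackets, the bracket diagram is a plain quaternionic matrix product, and the defining formula unwinds to its entries by the multiplication rule above; this is the starting point made explicit in \cite{2014arXiv1412.0646R} (where it specializes the standard trace-along-a-permutation identity). For the inductive step I would choose an innermost bracket; it encloses a plain product $Y=X^{(a)}\cdots X^{(b)}$ with nothing bracketed inside, which I evaluate to one quaternionic matrix by the multiplication rule, contracting precisely the internal indices $i_{a+1},\dots,i_b$ and $h_{a+1},\dots,h_b$ along the adjacencies dictated by $\rho$ and $\pi$. If the bracket carries $\mathrm{tr}$, I then identify $i_{\rho(b)}$ with $i_a$, sum over it and divide by $N$; if it carries $\mathrm{Re}$, I identify $h_{\pi(b)}$ with $h_a$, sum and divide by $2$. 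Either way I am left with a bracket diagram having one fewer bracket on a shortened matrix list, whose $\mathrm{Re}_{\mathrm{FD}}\mathrm{tr}_{\mathrm{FD}}$-data is obtained from $(\pi,\rho)$ by deleting the collapsed cycle from the permutation that sees it and splicing the freed endpoints into the ambient cycle of the other permutation; the induction hypothesis together with the deferred sums and constants then gives the claim.

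The main obstacle is exactly this last bookkeeping step: checking that collapsing an innermost bracket transforms the pair $(\pi,\rho)$ of $\mathrm{PM}(I_\infty)$-permutations in precisely the way the right-hand side demands --- deleting a $\delta$-symmetric pair of cycles for the permutation seeing the bracket, restricting the other permutation, keeping the $\mathrm{FD}$-representatives consistent, and altering the normalization $2^{-(\#(\pi)-1)}N^{-(\#(\rho)-1)}$ by exactly the one factor ($\frac12$ or $N^{-1}$) that was just peeled off. This is routine but delicate. To keep the conjugate-transpose factors under control I would carry out the whole argument on $\pm I_\infty$ and only pass to $\mathrm{FD}$ at the very end, so that the identity $A^{(-k)}_{i,i';h,h'}=\overline{A^{(k)}_{i',i;h',h}}$ together with $\delta\pi\delta=\pi^{-1}$ and $\delta\rho\delta=\rho^{-1}$ makes the $\delta$-mirrored half of every sum automatic, and so that the matrix product $X^{(a)}\cdots X^{(b)}$ inside each collapsed bracket comes with its conjugate-transpose mirror for free.
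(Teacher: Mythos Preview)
The paper does not prove this lemma in the text; it simply cites \cite{2014arXiv1412.0646R} for the proof. So there is no in-paper argument to compare against, and your proposal is effectively supplying what the paper outsources.

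Your approach---pass to the $2\times 2$ complex model, identify $\mathrm{tr}$ with closing the $i$-index loop (factor $N^{-1}$) and $\mathrm{Re}$ with closing the $h$-index loop (factor $\tfrac12$), then induct on the number of brackets by collapsing an innermost one---is the natural direct argument and is sound. You have correctly isolated the only nontrivial point: verifying that collapsing an innermost bracket transforms the pair $(\pi,\rho)$ exactly as the right-hand side requires, namely deleting the $\delta$-paired cycle from the permutation that sees the bracket, splicing the collapsed factor into the ambient cycle of the other permutation, and decrementing the relevant cycle count so that the normalization $2^{-(\#(\pi)-1)}N^{-(\#(\rho)-1)}$ changes by the single factor just peeled off. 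Your plan to work on $\pm I_\infty$ throughout and pass to $\mathrm{FD}$ only at the end is the right way to keep the conjugate-transpose mirrors automatic via $\delta\pi\delta=\pi^{-1}$.

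One small point worth making explicit in your write-up: when you collapse an innermost $\mathrm{Re}$-bracket to a single factor $Y'$, that factor is real-entried, so its $h$-indices decouple ($Y'_{i,i';h,h'}=Y'_{i,i'}\,\delta_{h,h'}$); this is what lets $Y'$ sit in the ambient $\pi$-cycle without introducing a spurious $h$-constraint. Dually, collapsing an innermost $\mathrm{tr}$-bracket yields a quaternion scalar whose $i$-indices have disappeared, which is what lets it sit in the ambient $\rho$-cycle. Stating this makes the ``splicing'' step transparent and prevents the reader from worrying about index mismatches at the seam.
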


Thus, for example
\[X_{1}\mathrm{tr}\left(\mathrm{Re}\left(X_{2}^{\ast}\mathrm{Re}\left(X_{3}\right)\right)X_{4}^{\ast}\mathrm{Re}\left(X_{5}\right)\right)X_{6}=\mathrm{Re}_{\pi}\mathrm{tr}_{\rho}\left(X_{1},X_{2},X_{3},X_{4},X_{5},X_{6}\right)\]
where \(\pi=\left(\infty,1,-4,6\right)\left(-2\right)\left(3\right)\left(5\right)\) and \(\left(\infty,1,6\right)\left(-2,3,-4,5\right)\).

\subsection{Moments and cumulants}

Throughout, \(\left(\Omega,{\cal F},\mathbb{P}\right)\) will be a probability triple.

We will give a definition of cumulants which may be applied to quaternion-valued random variables.  Since the variables do not commute, we wish to take the necessary expected values before we perform the multiplications.

\begin{definition}
The M\"{o}bius function (on the poset of partitions) is a function \(\mu:{\cal P}\left(I\right)^{2}\rightarrow\mathbb{C}\)
which is given by
\[\mu\left(\pi,\rho\right):=\left\{\begin{array}{ll}\prod_{V\in\rho}\left(-1\right)^{\left|\left\{U\in\pi:U\subseteq V\right\}\right|}\left(\left|\left\{U\in\pi:U\subseteq V\right\}\right|\right)!\textrm{,}&\pi\preceq\rho\\0\textrm{,}&\textrm{otherwise}\end{array}\right.\textrm{.}\]

The M\"{o}bius function has the property that for any \(\pi,\rho\in{\cal P}\left(I\right)\) with \(\pi\preceq\rho\):
\[\sum_{\sigma\in{\cal P}\left(I\right):\pi\preceq\sigma\preceq\rho}\mu\left(\pi,\sigma\right)=\sum_{\sigma\in{\cal P}\left(I\right):\pi\preceq\sigma\preceq\rho}\mu\left(\sigma,\rho\right)=\left\{\begin{array}{ll}1\textrm{,}&\pi=\rho\\0\textrm{,}&\textrm{otherwise}\end{array}\right.\textrm{.}\]
\end{definition}

See \cite{MR2266879}, Chapter~10, for more on M\"{o}bius functions in general and on the computation of this particular M\"{o}bius function.

\begin{definition}
Let \(Q_{1},\ldots,Q_{n}:\Omega\rightarrow\mathbb{H}\) be random variables.  The {\em \(n\)th mixed moment} is the function
\[a_{n}\left(Q_{1},\ldots,Q_{n}\right)=\mathbb{E}\left(Q_{1}\cdots Q_{n}\right)\textrm{.}\]

We define a moment associated to each \(\pi=\left\{V_{1},\ldots,V_{m}\right\}\in{\cal P}\left(n\right)\).  For each \(V_{k}\in\pi\), we define a probability space \(\left(\Omega_{k},{\cal F}_{k},\mathbb{P}_{k}\right)\).  For each \(i\in V_{k}\), we define random variables \(Q^{\prime}_{i}:\Omega_{k}\rightarrow\mathbb{H}\) which are jointly distributed identically to the \(Q_{i}:\Omega\rightarrow\mathbb{H}\).  The moment associated to \(\pi\) is the expected value on the product of the probability spaces:
\[a_{\pi}\left(Q_{1},\ldots,Q_{n}\right):=\int_{\Omega_{1}\times\cdots\times\Omega_{m}}Q^{\prime}_{1}\cdots Q^{\prime}_{n}d\mathbb{P}_{1}\cdots d\mathbb{P}_{m}\]
(so the \(Q_{k}^{\prime}\) from different blocks are independent).

The cumulant associated with \(\pi\in{\cal P}\left(n\right)\) is the function
\[k_{\pi}:=\sum_{\rho\in{\cal P}\left(n\right):\rho\preceq\pi}\mu\left(\rho,\pi\right)a_{\rho}\textrm{.}\]
The cumulants then satisfy the {\em moment-cumulant formula}:
\[a_{\pi}=\sum_{\rho\in{\cal P}\left(n\right):\rho\preceq\pi}k_{\rho}\textrm{.}\]
We define
\[k_{n}:=k_{1_{n}}=\sum_{\pi\in{\cal P}\left(n\right)}\mu\left(\pi,1_{n}\right)a_{\pi}\textrm{.}\]
\label{definition: cumulants}
\end{definition}
If a cumulant has an argument which is independent from all the others, then the cumulant will vanish.  We note that \(k_{2}\) is the covariance of its arguments.  See \cite{MR1669968} for more on quaternionic covariance.

If a quaternionic random variable is sufficiently symmetric, then its expected value is real (since quaternionic conjugation is a diffeomorphism which preserves volume).  All of the quantities we will be dealing with have this property: we can see that the expected value of any expression calculated with Proposition~\ref{proposition: topological expansion} will be real.

\begin{lemma}
Let \(Q_{1},\ldots,Q_{n}:\Omega\rightarrow\mathbb{H}\) be quaternionic random variables with \(\mathbb{E}\left(Q_{i_{1}}^{\left(\varepsilon\left(1\right)\right)}\cdots Q_{i_{m}}^{\left(\varepsilon\left(n\right)\right)}\right)\) real for any \(i_{1}\,\ldots,i_{m}\in\left[n\right]\).  For \(\pi\in{\cal P}\left(n\right)\), let \(\zeta=\left(\infty,1,\ldots,n\right)\), and let \(\zeta^{\prime}=\prod_{V\in\pi}\left.\zeta\right|_{V}\).  Then
\[a_{\pi}\left(Q_{1},\ldots,Q_{n}\right)=\left(\mathbb{E}\circ\mathrm{Re}\right)_{\zeta^{\prime}}\left(Q_{1},\ldots,Q_{n}\right)\]
and, denoting the smallest element of \(V\) by \(v_{0}\):
\[k_{\pi}\left(Q_{1},\ldots,Q_{n}\right)=\prod_{V\in\rho}k_{\left|V\right|}\left(v_{0},\zeta^{\prime}\left(v_{0}\right),\ldots,\zeta^{\prime\left(\left|V\right|-1\right)}\left(v_{0}\right)\right)\textrm{.}\]
\label{lemma: quaternion cumulant}
\end{lemma}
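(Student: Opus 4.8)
The plan is to prove the two identities separately, with the moment identity serving as the main input to the cumulant identity. First I would establish the moment formula $a_{\pi}(Q_1,\ldots,Q_n)=(\mathbb{E}\circ\mathrm{Re})_{\zeta'}(Q_1,\ldots,Q_n)$. Starting from $a_{\pi}=\prod_{V\in\pi}\mathbb{E}_{\Omega_{k}}\big(\prod_{i\in V}Q'_i\big)$, which is just the definition of the moment along $\pi$ (independent copies from distinct blocks factor, and within each block the product is taken in the order inherited from $\zeta$), I would note that each factor $\mathbb{E}(\prod_{i\in V}Q'_i)$ is by hypothesis a real number. Hence the product of these factors is unaffected if we apply $\mathrm{Re}$ to each; and since each factor is a scalar (real), they commute, so the product equals $\prod_{V\in\pi}\mathbb{E}\,\mathrm{Re}(\cdots)$, which is exactly $(\mathbb{E}\circ\mathrm{Re})_{\zeta'}$ evaluated at the arguments, because $\zeta'=\prod_{V\in\pi}\zeta|_V$ is the permutation whose cycles list the elements of each block in $\zeta$-order. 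The only subtlety is matching the cycle $(\infty,1,\ldots,n)$ convention with the product ordering inside blocks, which is a bookkeeping check rather than a real obstacle.

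Next I would turn to the cumulant identity. The key point is that once every $a_{\rho}$ appearing in $k_{\pi}=\sum_{\rho\preceq\pi}\mu(\rho,\pi)a_{\rho}$ is real-valued (which follows from the first part, applied to each $\rho$), the entire computation takes place among commuting real scalars, so it reduces to the classical (commutative) moment–cumulant inversion. Concretely, I would argue that the reality hypothesis lets us pass to the commutative setting: define scalar-valued ``moments'' $\tilde a_{\rho}:=a_{\rho}(Q_1,\ldots,Q_n)\in\mathbb{R}$, which by multiplicativity over blocks and the first part satisfy the same factorization as classical moments of commuting random variables with joint ``moments'' given by $\mathbb{E}\,\mathrm{Re}$ along the relevant cyclic orders. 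The classical theory (the moment–cumulant relation over the partition lattice, as recalled in the Definition of cumulants via the Möbius function) then gives $k_{\pi}=\prod_{V\in\pi}k_{|V|}$ of the appropriately ordered arguments. I would spell this out by substituting the first identity into the definition of $k_{\pi}$, using that $\mu(\rho,\pi)=\prod_{V\in\pi}\mu(\rho|_V,1_V)$ when $\rho\preceq\pi$, and factoring the sum over $\rho\preceq\pi$ as a product of sums over $\rho|_V\in\mathcal{P}(V)$; each inner sum is precisely $k_{|V|}$ of the block's arguments listed in $\zeta'$-order, i.e.\ starting from $v_0$ and iterating $\zeta'$.

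The step I expect to be the genuine (if modest) obstacle is the order-tracking: making sure that the arguments fed to $k_{|V|}$ really are $v_0,\zeta'(v_0),\ldots,\zeta'^{(|V|-1)}(v_0)$ and not some other rotation or reversal, given that $\zeta'$ restricted to $V$ is a single cycle and the smallest element $v_0$ is the canonical starting point. This requires checking that $\zeta'|_V=\zeta|_V$ as cyclic sequences and that the moment $a_{\rho}$ for $\rho\preceq\pi$ factors over blocks of $\pi$ \emph{compatibly} with this indexing — i.e.\ that restricting and then taking cumulants commutes with the block decomposition. Everything else is the standard Möbius-inversion argument over $\mathcal{P}(n)$, valid verbatim because reality has collapsed the noncommutativity; I would cite the classical moment–cumulant machinery (as in \cite{MR2266879}, Chapter~11) for that part rather than reproving it.
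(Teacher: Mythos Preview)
Your argument for the cumulant identity (part two) matches the paper's: both use multiplicativity of the M\"{o}bius function $\mu(\rho,\pi)=\prod_{V\in\pi}\mu(\rho|_V,1_V)$ together with the moment formula to factor the defining sum for $k_\pi$ over the blocks of $\pi$. That part is fine.

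There is, however, a genuine gap in your treatment of the moment identity. You begin by asserting
\[
a_\pi=\prod_{V\in\pi}\mathbb{E}_{\Omega_k}\Big(\prod_{i\in V}Q'_i\Big)
\]
and call this ``just the definition of the moment along $\pi$ (independent copies from distinct blocks factor).'' It is not the definition: by Definition~\ref{definition: cumulants}, $a_\pi$ is the integral over $\Omega_1\times\cdots\times\Omega_m$ of the single product $Q'_1\cdots Q'_n$ \emph{in the original order}. Because the $Q'_i$ are quaternion-valued and do not commute, independence alone does not give you this factorization; the variables from different blocks are interleaved in the product and cannot simply be regrouped. For instance, with $\pi=\{\{1,3\},\{2\}\}$ one has $a_\pi=\int Q'_1Q'_2Q'_3\,d\mathbb{P}_1d\mathbb{P}_2$, and integrating out $\Omega_2$ yields $\int Q'_1\,\mathbb{E}(Q_2)\,Q'_3\,d\mathbb{P}_1$; only the hypothesis that $\mathbb{E}(Q_2)\in\mathbb{R}$ lets you commute it past $Q'_1$ and obtain $\mathbb{E}(Q_2)\,\mathbb{E}(Q_1Q_3)$.

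You do invoke the reality hypothesis, but only \emph{after} assuming the factorization, to justify inserting $\mathrm{Re}$ and commuting the resulting scalar factors. The paper's proof uses reality at the earlier and essential step: it integrates the probability spaces one at a time, observing that once all but $\Omega_k$ are integrated, every $Q'_i$ with $i\notin V_k$ has become a real scalar and hence commutes with the remaining $Q'_i$, so the $\Omega_k$-integral contributes exactly $\mathbb{E}\big(\prod_{i\in V_k}Q_i\big)$ in $\zeta$-order. Inserting this inductive step at the start of your argument repairs the gap; the rest of your proof then goes through as written.
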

\begin{proof}
We use the notation from Definition~\ref{definition: cumulants}.  If we have integrated all but one of the probability spaces (say, all except for \(\Omega_{k}\)), then all variables \(Q^{\prime}_{i}\) with \(i\notin V_{k}\) have been integrated and are therefore real and commute with the \(Q^{\prime}_{i}\) with \(i\in V_{k}\).  Thus if \(V_{k}=\left\{i_{1},\ldots,i_{m}\right\}\) with \(i_{1}<\cdots<i_{m}\), the contribution of the \(\Omega_{k}\) is \(\mathbb{E}\left(Q_{i_{1}}\cdots Q_{i_{m}}\right)\), that is, \(\mathbb{E}\left(\mathrm{Re}_{\left.\zeta\right|_{V_{k}}}\left(Q_{1},\ldots,Q_{n}\right)\right)\) (since the expected value is real), giving the first part of the result.

Noting that the M\"{o}bius function \(\mu\left(\pi,\rho\right)\) is multiplicative over the blocks of \(\rho\), we may calculate:
\[k_{\rho}\left(Q_{1},\ldots,Q_{n}\right)=\sum_{\pi\preceq\rho}\prod_{V\in\rho}\mu\left(\left.\pi\right|_{V},\left\{V\right\}\right)\left(\mathbb{E}\circ\mathrm{Re}\right)_{\left.\zeta^{\prime}\right|_{V}}\left(Q_{1},\ldots,Q_{n}\right)\]
which gives the second part.
\end{proof}

\begin{example}
We calculate that if \(Q_{1},Q_{2},Q_{3},Q_{4}\) satisfy the hypotheses of Lemma~\ref{lemma: quaternion cumulant}, then
\begin{multline*}
k_{4}\left(Q_{1},Q_{2},Q_{3},Q_{4}\right)\\=\mathbb{E}\left(Q_{1}Q_{2}Q_{3}Q_{4}\right)-\mathbb{E}\left(Q_{1}\right)\mathbb{E}\left(Q_{2}Q_{3}Q_{4}\right)-\mathbb{E}\left(Q_{2}\right)\mathbb{E}\left(Q_{1}Q_{3}Q_{4}\right)\\-\mathbb{E}\left(Q_{3}\right)\mathbb{E}\left(Q_{1}Q_{2}Q_{4}\right)-\mathbb{E}\left(Q_{4}\right)\mathbb{E}\left(Q_{1}Q_{2}Q_{3}\right)-\mathbb{E}\left(Q_{1}Q_{2}\right)\mathbb{E}\left(Q_{3}Q_{4}\right)\\-\mathbb{E}\left(Q_{1}Q_{3}\right)\mathbb{E}\left(Q_{2}Q_{4}\right)-\mathbb{E}\left(Q_{1}Q_{4}\right)\mathbb{E}\left(Q_{2}Q_{3}\right)+2\mathbb{E}\left(Q_{1}Q_{2}\right)\mathbb{E}\left(Q_{3}\right)\mathbb{E}\left(Q_{4}\right)\\+2\mathbb{E}\left(Q_{1}Q_{3}\right)\mathbb{E}\left(Q_{2}\right)\mathbb{E}\left(Q_{4}\right)+2\mathbb{E}\left(Q_{1}Q_{4}\right)\mathbb{E}\left(Q_{2}\right)\mathbb{E}\left(Q_{3}\right)\\+2\mathbb{E}\left(Q_{2}Q_{3}\right)\mathbb{E}\left(Q_{1}\right)\mathbb{E}\left(Q_{4}\right)+2\mathbb{E}\left(Q_{2}Q_{4}\right)\mathbb{E}\left(Q_{1}\right)\mathbb{E}\left(Q_{3}\right)\\+2\mathbb{E}\left(Q_{3}Q_{4}\right)\mathbb{E}\left(Q_{1}\right)\mathbb{E}\left(Q_{2}\right)-6\mathbb{E}\left(Q_{1}\right)\mathbb{E}\left(Q_{2}\right)\mathbb{E}\left(Q_{3}\right)\mathbb{E}\left(Q_{4}\right)\textrm{.}
\end{multline*}
We note that the real part function is redundant, since the expected values of the products are all real.  However, it is used in the expression in order to give the order of terms in the products.  (It would also be possible to use the trace function in its place.)
\end{example}

\subsection{Topological Expansion}

The Weingarten function is defined in \cite{MR1959915, MR2217291, MR2567222}, and the results in the lemma that follows are proven there.  (See \cite{2014arXiv1412.0646R} for more detail on the quaternionic case.)  The following definition is the quaternionic Weingarten function.  (Throughout, the Weingarten function will refer to the quaternionic Weingarten function.  It may be calculated from the real Weingarten function: \(\mathrm{Wg}^{\mathrm{Sp}\left(N\right)}=\left(-1\right)^{n/2}\mathrm{Wg}^{O\left(-2N\right)}\).  See \cite{MR2217291, MR2567222} for tables of values.)

\begin{definition}
For \(\pi,\rho\in{\cal P}_{2}\left(n\right)\), let \(d\left(\pi,\rho\right):=n/2-\#\left(\pi\vee\rho\right)\).

Define matrix with rows and columns indexed by \({\cal P}_{2}\left(n\right)\) by letting \(\mathrm{Gr}\left(\pi,\rho\right)=\left(-1\right)^{n/2}\left(-2N\right)^{\#\left(\pi\vee\rho\right)}\) for \(\pi,\rho\in{\cal P}_{2}\left(n\right)\).  We define the (quaternionic) Weingarten function and normalized Weingarten function \(\mathrm{Wg},\mathrm{wg}:{\cal P}_{2}^{2}\rightarrow\mathbb{R}\):
\[\mathrm{Wg}\left(\pi,\rho\right):=\mathrm{Gr}^{-1}\left(\pi,\rho\right)\textrm{;}\]
\[\mathrm{wg}\left(\pi,\rho\right):=\left(-2N\right)^{n/2+d\left(\pi,\rho\right)}\mathrm{Wg}\left(\pi,\rho\right)\textrm{.}\]

We note that the Weingarten function depends only on the block structure of \(\pi\vee\rho\).  Since all the blocks contain an even number of elements, we define an integer partition \(\Lambda\left(\pi\vee\rho\right)\) whose parts \(\lambda_{1},\ldots,\lambda_{k}\) are half the sizes of the blocks of \(\pi\vee\rho\).  If \(\lambda\) is an integer partition of \(n\), we define \(\mathrm{wg}\left(\lambda\right):=\mathrm{wg}\left(\pi,\rho\right)\) for any \(\pi,\rho\) such that \(\Lambda\left(\pi\vee\rho\right)=\lambda\).

For \(\pi_{1},\pi_{2}\in{\cal P}_{2}\left(n\right)\) with \(\pi_{1}\vee\pi_{2}=\pi\in{\cal P}\left(n\right)\) and \(\rho,\sigma\in{\cal P}\left(n\right)\) with \(\pi\preceq\rho\preceq\sigma\), we define a sort of cumulant of the Weingarten function by
\[c_{\pi,\rho,\sigma}:=\sum_{\tau:\rho\preceq\tau\preceq\sigma}\mu\left(\tau,\sigma\right)\prod_{V\in\tau}\mathrm{wg}\left(\left.\pi_{1}\right|_{V},\left.\pi_{2}\right|_{V}\right)\textrm{,}\]
or equivalently,
\[\prod_{V\in\sigma}\mathrm{wg}\left(\left.\pi_{1}\right|_{V},\left.\pi_{2}\right|_{V}\right)=\sum_{\tau:\rho\preceq\tau\preceq\sigma}c_{\pi,\rho,\tau}\textrm{.}\]
\end{definition}

The following is from \cite{MR1959915, MR2217291}, where more details can be found:
\begin{lemma}[Collins, \'{S}niady]
The Weingarten function may be expressed:
\begin{multline*}
\mathrm{wg}\left(\pi,\rho\right)
\\=\left(2N\right)^{d\left(\pi,\rho\right)}\sum_{k\geq 0}\left(-1\right)^{k}\sum_{\substack{\sigma_{0},\ldots,\sigma_{k}\in{\cal P}_{2}\left(n\right)\\\sigma_{0}=\pi,\sigma_{k}=\rho\\\sigma_{0}\neq\sigma_{1}\neq\cdots\neq\sigma_{k}}}\left(2N\right)^{-\left(d\left(\sigma_{0},\sigma_{1}\right)+\cdots+d\left(\sigma_{k-1},\sigma_{k}\right)\right)}\textrm{.}
\end{multline*}

For \(\pi,\rho\in{\cal P}_{2}\left(n\right)\),
\[\lim_{N\rightarrow\infty}\mathrm{wg}\left(\pi,\rho\right)=\prod_{i\in\Lambda\left(\pi,\rho\right)}\left(-1\right)^{i-1}C_{i-1}+O\left(N^{-1}\right)\]
where \(C_{k}:=\frac{1}{k+1}\binom{2k}{k}\) is the \(k\)th Catalan number.

For \(\pi_{1},\pi_{2}\in{\cal P}_{2}\left(n\right)\) with \(\pi_{1}\vee\pi_{2}=\pi\in{\cal P}\left(n\right)\) and \(\rho,\sigma\in{\cal P}\left(n\right)\) such that \(\pi\preceq\rho\preceq\sigma\), the cumulant \(c_{\pi,\rho,\sigma}\) is \(O\left(N^{2\left(\#\left(\sigma\right)-\#\left(\rho\right)\right)}\right)\).
\label{lemma: Weingarten}
\end{lemma}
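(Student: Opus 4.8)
The plan is to establish the three claims of Lemma~\ref{lemma: Weingarten} in sequence, since the second and third follow naturally once the first (the series expansion) is in hand. For the series expansion, I would begin from the defining relation \(\mathrm{Gr}\cdot\mathrm{Wg}=I\) on the space of functions on \({\cal P}_{2}\left(n\right)\). Writing \(\mathrm{Gr}\left(\pi,\rho\right)=\left(-1\right)^{n/2}\left(-2N\right)^{n/2}\left(-2N\right)^{-d\left(\pi,\rho\right)}\), one factors out the diagonal-in-\(N\) scalar and is left with inverting a matrix of the form \(I-E\), where \(E\left(\pi,\rho\right)\) is supported off the diagonal and carries a strictly negative power of \(2N\) (because \(d\left(\pi,\rho\right)\ge 1\) whenever \(\pi\neq\rho\)). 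The Neumann series \(\left(I-E\right)^{-1}=\sum_{k\ge 0}E^{k}\) then converges for \(N\) large, and unwinding the matrix product \(E^{k}\) as a sum over chains \(\sigma_{0}=\pi,\sigma_{1},\ldots,\sigma_{k}=\rho\) with consecutive terms distinct produces exactly the stated formula after renormalizing by \(\left(2N\right)^{d\left(\pi,\rho\right)}\) to pass to \(\mathrm{wg}\). Here one should cite \cite{MR1959915, MR2217291} for the sign bookkeeping and the precise identification with \(\mathrm{Wg}^{\mathrm{Sp}(N)}\); the content is really the observation that \(d\) is a metric on \({\cal P}_{2}\left(n\right)\) (it is half the word-length in the Cayley graph) so that \(d\left(\sigma_{0},\sigma_{1}\right)+\cdots+d\left(\sigma_{k-1},\sigma_{k}\right)\ge d\left(\pi,\rho\right)\) with each positive summand.

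For the large-\(N\) limit, I would read off from the series that the dominant contribution to \(\left(2N\right)^{-d\left(\pi,\rho\right)}\mathrm{wg}\left(\pi,\rho\right)\) comes from geodesic chains, i.e.\ those where \(d\left(\sigma_{0},\sigma_{1}\right)+\cdots+d\left(\sigma_{k-1},\sigma_{k}\right)=d\left(\pi,\rho\right)\); all non-geodesic chains are \(O\left(N^{-1}\right)\) smaller. Since \(\mathrm{wg}\) depends only on the block structure of \(\pi\vee\rho\) and is multiplicative over the blocks of \(\pi\vee\rho\) (the geodesics decompose blockwise), it suffices to evaluate the leading term on a single block, i.e.\ for an integer partition consisting of one part \(i\), where \(\pi,\rho\) are the two pairings whose join is a single block of size \(2i\). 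The number of geodesics between two such pairings, counted with the sign \(\left(-1\right)^{k}\) from the series, is the classical computation that yields the signed Catalan number \(\left(-1\right)^{i-1}C_{i-1}\); I would cite \cite{MR2567222} (or carry out the short induction on \(i\) using the recursive structure of the Cayley graph of the hyperoctahedral group) rather than reprove it.

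For the cumulant bound, I would use the definition \(c_{\pi,\rho,\sigma}=\sum_{\tau:\rho\preceq\tau\preceq\sigma}\mu\left(\tau,\sigma\right)\prod_{V\in\tau}\mathrm{wg}\left(\left.\pi_{1}\right|_{V},\left.\pi_{2}\right|_{V}\right)\). Each factor \(\mathrm{wg}\left(\left.\pi_{1}\right|_{V},\left.\pi_{2}\right|_{V}\right)\) is \(O\left(1\right)\) by the limit just established (it converges, hence is bounded), so naively the sum is \(O\left(1\right)\); the point is that the Möbius-weighted alternating sum produces cancellation, lowering the order. The clean way to see the gain is the telescoping/inductive identity: \(c_{\pi,\rho,\sigma}\) measures the failure of \(\prod_{V}\mathrm{wg}\) to be determined by its values on coarser partitions, and each time two blocks of \(\rho\) are merged in passing up to \(\sigma\), the connected (cumulant-type) part is suppressed by a factor \(O\left(N^{-2}\right)\) — this is exactly the statement that the leading order of \(\mathrm{wg}\) is multiplicative over \(\pi\vee\rho\), which forces the \(k=0\) term in the cumulant to vanish and leaves a contribution of order \(N^{2\left(\#\left(\sigma\right)-\#\left(\rho\right)\right)}\) accounting for the \(\#\left(\sigma\right)-\#\left(\rho\right)\) independent mergings; again I would attribute this to \cite{MR1959915, MR2217291}.

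I expect the main obstacle to be the second claim: pinning down that the signed count of geodesics in the hyperoctahedral Cayley graph between the relevant pair of pairings equals precisely \(\left(-1\right)^{i-1}C_{i-1}\), rather than merely that it is \(O\left(1\right)\). Getting the constant exactly right requires either invoking the known Weingarten asymptotics of \cite{MR2567222} wholesale or reproducing a genuine combinatorial identity (e.g.\ via a generating-function argument on non-crossing structures), and the sign conventions inherited from \(\mathrm{Wg}^{\mathrm{Sp}(N)}=\left(-1\right)^{n/2}\mathrm{Wg}^{O(-2N)}\) must be tracked carefully throughout. The other two parts are essentially formal once the series is set up.
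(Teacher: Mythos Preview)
Your treatment of the first two claims matches the paper's: both you and the paper defer the series expansion and the Catalan-number asymptotics to Collins--\'{S}niady, and your Neumann-series sketch is the standard derivation behind the citation.

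For the third claim, however, your argument has a genuine gap. You assert that multiplicativity of the leading term of \(\mathrm{wg}\) over the blocks of \(\pi_{1}\vee\pi_{2}\) ``forces the \(k=0\) term in the cumulant to vanish'' and that ``each merger costs \(O\left(N^{-2}\right)\)''. But multiplicativity of the \emph{leading} term only shows that the \(O\left(1\right)\) contributions cancel in the M\"{o}bius sum, leaving \(c_{\pi,\rho,\sigma}=O\left(N^{-1}\right)\) whenever \(\rho\neq\sigma\); the next correction to \(\mathrm{wg}\) is generically \(O\left(N^{-1}\right)\) (paths one unit longer than geodesic), and nothing in your sketch explains why those cancel as well, nor how the suppression compounds to \(O\left(N^{2\left(\#\left(\sigma\right)-\#\left(\rho\right)\right)}\right)\) over \(\#\left(\rho\right)-\#\left(\sigma\right)\) mergers. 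An inductive ``one merger at a time'' argument would need to know that each partial cumulant is again of a form whose leading order is multiplicative, which you do not establish.

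The paper's proof supplies the missing mechanism by going back into the path expansion rather than arguing abstractly. Using the functional \(S\left(f\right)=\sum_{k\geq 1}\left(-1\right)^{k}f\left(k\right)\) and the convolution \(\ast\) from \cite{MR1959915} (for which \(S\left(g\ast h\right)=S\left(g\right)S\left(h\right)\)), one rewrites \(\prod_{V\in\tau}\mathrm{wg}\left(\left.\pi_{1}\right|_{V},\left.\pi_{2}\right|_{V}\right)\) itself as a single sum over paths \(\upsilon_{0},\ldots,\upsilon_{k}\) in \({\cal P}_{2}\left(n\right)\) constrained to respect \(\tau\). The M\"{o}bius sum over \(\tau\) between \(\rho\) and \(\sigma\) then collapses, leaving exactly those paths with \(\rho\vee\upsilon_{0}\vee\cdots\vee\upsilon_{k}=\sigma\). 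At this point Lemma~\ref{lemma: join} gives the length bound: any \(\upsilon\) that merges blocks of \(\rho\) satisfies \(d\left(\upsilon,\upsilon_{m}\right)\geq\#\left(\rho\right)-\#\left(\upsilon\vee\rho\right)\) for every \(\upsilon_{m}\) already in the path, so inserting it raises the total length by at least twice that quantity, and the required increase of \(2\left(\#\left(\rho\right)-\#\left(\sigma\right)\right)\) follows. It is this passage through the interleaved path sum, not a bare multiplicativity statement, that produces the factor of \(2\) in the exponent.
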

\begin{proof}
The form for the Weingarten function and its asymptotics may be found in \cite{MR2217291}.

We define functional \(S\) on functions on the positive integers \(\mathbb{N}^{\ast}\rightarrow\mathbb{C}\) which are eventually zero by \(S\left(f\right)=\sum_{k=1}^{\infty}\left(-1\right)^{k}f\left(k\right)\) and a product \(g\ast h\) by
\[\left(g\ast h\right)\left(k\right)=\sum_{U_{1},U_{2}\subseteq\left[k\right]:U_{1}\cup U_{2}=\left[k\right]}g\left(\left|U_{1}\right|\right)h\left(\left|U_{2}\right|\right)\textrm{.}\]
The product is associative, and \(S\left(g\ast h\right)=S\left(g\right)S\left(h\right)\) (see \cite{MR1959915}).

Let \(\tau=\left\{V_{1},\ldots,V_{t}\right\}\in{\cal P}\left(n\right)\).  For \(m\in\left[t\right]\) and for each nonnegative integer \(l\), define \(f_{m}^{\left(l\right)}:\mathbb{N}\rightarrow\mathbb{C}\) by letting \(f_{m}^{\left(l\right)}\left(k\right)\) be the number paths in \({\cal P}_{2}\left(V_{m}\right)\) (distance metric \(d\)) of \(k\) steps and total length \(l\) (where each step is distinct from the previous one).  Treating \(\mathrm{wg}\) as a function of \(2N\), the coefficient of \(\left(2N\right)^{d\left(\pi_{1},\pi_{2}\right)-l}\) in \(\prod_{V\in\tau}\mathrm{wg}\left(\left.\pi_{1}\right|_{V},\left.\pi_{2}\right|_{V}\right)\) is
\begin{multline*}
\sum_{l_{1},\cdots,l_{t}:l_{1}+\cdots+l_{t}=l}S\left(f_{1}^{\left(l_{1}\right)}\right)\cdots S\left(f_{t}^{\left(l_{t}\right)}\right)
\\=\sum_{l_{1},\cdots,l_{t}:l_{1}+\cdots+l_{t}=l}S\left(f_{1}^{\left(l_{1}\right)}\ast\cdots\ast f_{t}^{\left(l_{t}\right)}\right)
\end{multline*}
which may be interpreted as the coefficient of \(\left(2N\right)^{d\left(\pi_{1},\pi_{2}\right)-l}\) in
\[\left(2N\right)^{d\left(\pi_{1},\pi_{2}\right)}\sum_{k\geq 0}\left(-1\right)^{k}\sum_{\substack{\upsilon_{0},\ldots,\upsilon_{k}\in{\cal P}_{2}\left(\tau\right)\\\upsilon_{0}=\pi_{1},\upsilon_{k}=\pi_{2}\\\upsilon_{0}\neq\upsilon_{1}\neq\cdots\neq\upsilon_{k}}}\left(2N\right)^{-\left(d\left(\upsilon_{0},\upsilon_{1}\right)+\cdots+d\left(\upsilon_{k-1},\upsilon_{k}\right)\right)}\]
(where the \(U_{m}\) from the definition of the product \(\ast\) are interpreted as the sets on which the \(k_{m}\) steps of the path on \(V_{m}\) take place).

The cumulant \(c_{\pi,\rho,\sigma}\) is the sum over \(\tau\) with \(\rho\preceq\tau\preceq\sigma\) of \(\mu\left(\tau,\sigma\right)\) times the above sum.  If we consider all occurrences of the term associated with path \(\upsilon_{0},\ldots,\upsilon_{k}\), it will occur in the sum associated with all \(\tau\) with \(\upsilon_{0}\vee\cdots\vee\upsilon_{k}\preceq\tau\).  The sum of the coefficients will be \(\sum_{\tau:\rho,\upsilon_{0}\vee\cdots\vee\upsilon_{k}\preceq\tau\preceq\sigma}\mu\left(\tau,\sigma\right)\), which vanishes unless \(\rho\vee\upsilon_{0}\vee\cdots\vee\upsilon_{k}=\sigma\).  Thus the cumulant may be expressed
\[c_{\pi,\rho,\sigma}=\left(2N\right)^{d\left(\pi_{1},\pi_{2}\right)}\sum_{k\geq 0}\left(-1\right)^{k}\sum_{\substack{\upsilon_{0},\ldots,\upsilon_{k}\in{\cal P}_{2}\left(\tau\right)\\\upsilon_{0}=\pi_{1},\upsilon_{k}=\pi_{2}\\\upsilon_{0}\neq\upsilon_{1}\neq\cdots\neq\upsilon_{k}\\\rho\vee\upsilon_{0}\vee\cdots\vee\upsilon_{k}=\sigma}}\left(2N\right)^{-\left(d\left(\upsilon_{0},\upsilon_{1}\right)+\cdots+d\left(\upsilon_{k-1},\upsilon_{k}\right)\right)}\textrm{.}\]

Let \(\upsilon_{0},\ldots,\upsilon_{k}\) be a path in \({\cal P}_{2}\left(n\right)\) with \(\upsilon_{0}\vee\cdots\vee\upsilon_{k}=\rho\) of total length \(l\).  Joining \(\upsilon\in{\cal P}_{2}\left(n\right)\) to \(\upsilon_{m}\) must reduce the number of blocks by at least as much as joining \(\upsilon\) to \(\rho\) (Lemma~\ref{lemma: join}).  Thus \(d\left(\upsilon,\upsilon_{m}\right)=n/2-\#\left(\upsilon\vee\upsilon_{m}\right)\geq\#\left(\rho\right)-\#\left(\upsilon\vee\rho\right)\), so inserting it into the path must increase the total length of the path by at least \(2\#\left(\left(\rho\right)-\#\left(\upsilon\vee\rho\right)\right)\).  Thus the length of the path from \(\pi_{1}\) to \(\pi_{2}\) must be increased by at least \(2\left(\#\left(\pi\right)-\#\left(\sigma\right)\right)\geq 2\left(\#\left(\rho\right)-\#\left(\sigma\right)\right)\).  The result follows.
\end{proof}

The following definition is adapted from in \cite{MR2337139, MR2483727}.  See also \cite{MR2240781}.

\begin{definition}
Let \(X_{1},\ldots,X_{n}:\Omega\rightarrow M_{N\times N}\left(\mathbb{H}\right)\) be random quaternionic matrices.  Then the {\em normalized symplectic matrix cumulant} is the function \(f:\mathrm{PM}\left(n\right)\rightarrow\mathbb{C}\)
\begin{multline*}
f\left(\alpha\right)=\sum_{\pi\in\mathrm{PM}\left(n\right)}\left(-2N\right)^{\chi\left(\alpha,\pi\right)-\#\left(\alpha\right)}\mathrm{wg}\left(\delta\alpha,\delta\pi\right)
\\\times\mathbb{E}\left(\mathrm{Re}_{\mathrm{FD}\left(\pi\right)}\mathrm{tr}_{\mathrm{FD}\left(\pi\right)}\left(X_{1},\ldots,X_{n}\right)\right)\textrm{.}
\end{multline*}
\end{definition}

We note that if the matrices have a limit distribution, then
\begin{multline*}
\lim_{N\rightarrow\infty}f\left(\alpha\right)
\\=\sum_{\substack{\pi\in\mathrm{PM}_{\mathrm{nc}}\left(\alpha\right)\\\pi\preceq\alpha}}\prod_{i\in\Lambda\left(\left(\delta\alpha\right)\vee\left(\delta\pi\right)\right)}\left(-1\right)^{i-1}C_{i-1}\left(\lim_{N\rightarrow\infty}\circ\mathbb{E}\circ\mathrm{Re}\circ\mathrm{tr}\right)\left(X_{1},\ldots,X_{n}\right)
\end{multline*}
which is multiplicative in the sense that for \(\tau=\left\{V_{1},\ldots,V_{m}\right\}\in{\cal P}\left(n\right)\), if \(\alpha\preceq\pm\tau\), then, denoting the matrix cumulant of \(X_{i}\), \(i\in V_{k}\) associated with \(\left.\alpha\right|_{V_{k}}\) by \(f_{k}\left(\left.\alpha\right|_{V_{k}}\right)\),
\[\lim_{N\rightarrow\infty}f\left(\alpha\right)=\lim_{N\rightarrow\infty}f_{1}\left(\left.\alpha\right|_{V_{1}}\right)\cdots\lim_{N\rightarrow\infty}f_{m}\left(\left.\alpha\right|_{V_{m}}\right)\textrm{.}\]
The cumulants are multiplicative over independent matrices (see \cite{2014arXiv1412.0646R}):
\begin{lemma}
Let \(w:\left[n\right]\rightarrow\left[C\right]\) be a word in colours \(\left[C\right]\), and let \(X_{1},\ldots,X_{n}:\Omega\rightarrow M_{N\times N}\left(\mathbb{H}\right)\) be symplectically invariant random quaternionic matrices.  For each \(c\in\left[C\right]\), let the \(X_{k}\) with \(w\left(k\right)=c\) be independent from the rest of the matrices, and have normalized matrix cumculants \(f_{c}:\mathrm{PM}\left(w^{-1}\left(c\right)\right)\rightarrow\mathbb{C}\).  Then the normalized matrix cumulants \(f:\mathrm{PM}\left(n\right)\rightarrow\mathbb{C}\) vanish unless \(\alpha\preceq\pm\ker\left(w\right)\), in which case
\[f\left(\alpha\right)=f_{1}\left(\left.\alpha\right|_{w^{-1}\left(1\right)}\right)\cdots f_{C}\left(\left.\alpha\right|_{w^{-1}\left(C\right)}\right)\textrm{.}\]
\end{lemma}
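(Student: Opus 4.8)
The plan is to reduce the statement to the multiplicativity of the symplectic Weingarten calculus under independent Haar conjugations, together with a Möbius‑type cancellation of the cross‑colour terms. First I would exploit symplectic invariance: since the colour classes \(\{X_k:w(k)=c\}\), \(c\in[C]\), are independent and each is symplectically invariant, the family \((X_1,\ldots,X_n)\) has the same joint distribution as \((V_{w(1)}X_1V_{w(1)}^{\ast},\ldots,V_{w(n)}X_nV_{w(n)}^{\ast})\), where \(V_1,\ldots,V_C\) are independent Haar‑distributed on \(\mathrm{Sp}(N)\) and independent of all the \(X_k\). Substituting this into the definition of \(f(\alpha)\) and integrating over the \(V_c\) first, each moment \(\mathbb{E}(\mathrm{Re}_{\mathrm{FD}(\pi)}\mathrm{tr}_{\mathrm{FD}(\pi)}(VXV^{\ast}))\) is evaluated by the \(\mathrm{Sp}(N)\) Weingarten formula of Lemma~\ref{lemma: Weingarten} in its matrix‑integral form (see the references cited there): the integral over \(V_c\) produces a sum over pairings of the occurrences of colour \(c\), weighted by symplectic Weingarten functions, together with a ``rerouted'' moment in which the trace structure \(\mathrm{FD}(\pi)\) has been composed with those pairings. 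The crucial feature is that in the rerouted moment every trace‑cycle multiplies only matrices of a single colour, so the remaining expectation over the \(X_k\) factors as a product over \(c\in[C]\) by independence.

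Next I would reorganise the resulting double sum — over \(\pi\) and over the per‑colour pairings — by collecting it into a product over the colour classes. Two structural inputs do the work. First, the symplectic Weingarten function depends only on the integer partition \(\Lambda\) of the join it is attached to and is multiplicative over that join's blocks, so in the rerouted expression, where every trace‑cycle and every Weingarten pairing respects the colouring, the Weingarten weights factor over \(c\); one must also check that the \(\mathrm{wg}(\delta\alpha,\delta\pi)\) already present in the definition combines with the Weingarten factors coming out of the integration, using the \(\mathrm{Gr}\)–\(\mathrm{Wg}\) inversion. Second, Lemma~\ref{lemma: chi} splits \(\chi(\alpha,\cdot)\) additively over the colour blocks when the relevant permutation is subordinate to \(\pm\ker(w)\), and records the connected‑sum correction when a colour‑respecting structure is further glued. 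Putting these together gives \(f(\alpha)=\bigl(\prod_c f_c(\alpha|_{w^{-1}(c)})\bigr)D(\alpha)\), where \(D(\alpha)\) is a purely combinatorial coefficient recording the gluing of colour classes; when \(\alpha\preceq\pm\ker(w)\) no gluing is forced and \(D(\alpha)=1\), which is the product formula.

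For the vanishing statement, when \(\alpha\) has a cycle meeting two distinct colours the coefficient \(D(\alpha)\) becomes an alternating sum over the ways the colour classes can be glued, and I would show it collapses to \(0\). This is the exact analogue of a classical cumulant with an independent argument vanishing, and of the corresponding vanishing for free cumulants: concretely it is an inclusion–exclusion identity over an interval of the partition lattice, of the type already used in the proof of Lemma~\ref{lemma: Weingarten} through \(S(g\ast h)=S(g)S(h)\) and the Möbius relation \(\sum_{\rho\preceq\sigma\preceq\tau}\mu(\sigma,\tau)=0\) for \(\rho\neq\tau\). An alternative and perhaps cleaner route avoids re‑deriving the Haar integral: one starts from the ``forward'' moment–cumulant relation expressing \(\mathbb{E}(\mathrm{Re}_{\mathrm{FD}(\rho)}\mathrm{tr}_{\mathrm{FD}(\rho)}(X))\) as a \(\mathrm{Gr}\)‑weighted sum of the \(f(\alpha)\) (established in \cite{2014arXiv1412.0646R}); restricting to \(\rho\preceq\pm\ker(w)\) makes the left‑hand side block‑multiplicative by independence, and one inverts block by block, using induction on \(n\) to kill the mixed cumulants.

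I expect the main obstacle to be the bookkeeping underlying the first two paragraphs: tracking how the rerouting by the per‑colour pairings interacts with the fundamental‑domain conventions in \(\mathrm{FD}(\pi)\) and with the sign structure of \(\mathrm{PM}(n)\) (cycles paired with their sign‑reversed, order‑reversed partners), and verifying that the powers of \(-2N\) coming from \(\chi(\alpha,\pi)\), from the two Weingarten normalisations, and from the normalisation of the rerouted moment all combine into precisely the per‑colour exponents \(\chi(\alpha|_{w^{-1}(c)},\pi|_{w^{-1}(c)})-\#(\alpha|_{w^{-1}(c)})\), with no residual power of \(N\). This is entirely parallel to the orthogonal case of \cite{MR3217665}, so the quaternionic features add heavier index gymnastics but no new conceptual difficulty.
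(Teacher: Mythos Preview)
The paper does not prove this lemma; it states the result and defers to \cite{2014arXiv1412.0646R} for the proof (note the sentence ``The cumulants are multiplicative over independent matrices (see \cite{2014arXiv1412.0646R}):'' immediately preceding the statement). So there is no in-paper argument to compare your proposal against.

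That said, your sketch follows the standard route used in the complex and orthogonal analogues (and presumably in the cited quaternionic paper): exploit invariance to insert independent Haar conjugations \(V_{c}\) per colour class, integrate them out via the symplectic Weingarten formula so that the rerouted trace structure has monochromatic cycles, factor the remaining expectation by independence, and then match the combinatorics. Your alternative---invert the forward moment--cumulant relation block by block using M\"{o}bius inversion---is also a standard and arguably cleaner path to the same conclusion. The concerns you flag (tracking \(\mathrm{FD}\), the sign pairing in \(\mathrm{PM}(n)\), and the bookkeeping of the \((-2N)\)-exponents via Lemma~\ref{lemma: chi}) are exactly the quaternionic adaptations one expects \cite{2014arXiv1412.0646R} to carry out; none of them represents a conceptual gap, only the index gymnastics you already anticipate.
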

Integrals of traces may be computed in terms of the matrix cumulants.  See \cite{2014arXiv1412.0646R}:
\begin{proposition}
Let \(X_{1},\ldots,X_{n}:\Omega\rightarrow\mathbb{H}\) be symplectically invariant matrices with normalized matrix cumulants \(f:\mathrm{PM}\left(\left[n\right]\right)\rightarrow\mathbb{C}\).  Let \(\varphi_{\mathrm{Re}},\varphi_{\mathrm{tr}}\in\left[n\right]_{\infty}\).  Let \(\varepsilon:\left[n\right]\rightarrow\left\{1,-1\right\}\), and let \(\delta_{\varepsilon}\in S\left(\pm\left[n\right]\right):k\mapsto\varepsilon\left(\left|k\right|\right)k\).  Then
\begin{multline*}
\mathbb{E}\left[\mathrm{Re}_{\varphi_{\mathrm{Re}}}\mathrm{tr}_{\varphi_{\mathrm{tr}}}\left(X_{1}^{\left(\varepsilon\left(1\right)\right)},\ldots,X_{n}^{\left(\varepsilon\left(n\right)\right)}\right)\right]
\\=\sum_{\alpha\in\mathrm{PM}\left(n\right)}\left(-2\right)^{\chi\left(\varphi_{\mathrm{Re}},\delta_{\varepsilon}\alpha\delta_{\varepsilon}\right)-2\#\left(\varphi_{\mathrm{Re}}\right)}N^{\chi\left(\varphi_{\mathrm{tr}},\delta_{\varepsilon}\alpha\delta_{\varepsilon}\right)-2\#\left(\varphi_{\mathrm{tr}}\right)}f\left(\alpha\right)
\end{multline*}
where \(\alpha\) acts trivially on \(\pm\infty\).
\label{proposition: topological expansion}
\end{proposition}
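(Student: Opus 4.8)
The plan is to compute the left-hand side directly by the symplectic Weingarten calculus and then reorganize the resulting sum into the claimed form; equivalently, one may substitute the definition of \(f\) into the right-hand side and collapse the inner sum by Gram-matrix inversion, and these are the same computation read in opposite directions. First I would use symplectic invariance: for \(U\) Haar-distributed on \(\mathrm{Sp}\left(N\right)\) and independent of the \(X_{k}\),
\[
\mathbb{E}\left[\mathrm{Re}_{\varphi_{\mathrm{Re}}}\mathrm{tr}_{\varphi_{\mathrm{tr}}}\left(X_{1}^{\left(\varepsilon\left(1\right)\right)},\ldots,X_{n}^{\left(\varepsilon\left(n\right)\right)}\right)\right]=\mathbb{E}\int_{\mathrm{Sp}\left(N\right)}\mathrm{Re}_{\varphi_{\mathrm{Re}}}\mathrm{tr}_{\varphi_{\mathrm{tr}}}\left(\left(UX_{1}U^{\ast}\right)^{\left(\varepsilon\left(1\right)\right)},\ldots,\left(UX_{n}U^{\ast}\right)^{\left(\varepsilon\left(n\right)\right)}\right)dU,
\]
using \(\left(UXU^{\ast}\right)^{\left(\varepsilon\right)}=UX^{\left(\varepsilon\right)}U^{\ast}\) for \(\varepsilon=\pm1\). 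Passing to the \(2N\times 2N\) complex representation of the quaternionic matrices and expanding in matrix entries, the \(U\) and \(U^{\ast}\) factors pull out of the \(X\)-entry product, and the \(U\)-integral is evaluated by the \(\mathrm{Sp}\left(N\right)\)-Weingarten formula: a sum over pairs of pairings of the \(2n\) ends, weighted by \(\mathrm{Wg}\), with index contractions performed through the invariant symplectic form. In the doubled index set \(\pm\left[n\right]\) of the \(\mathrm{PM}\)-formalism the symplectic form is precisely the sign-flip \(\delta\), so these contractions assemble into an element \(\alpha\in\mathrm{PM}\left(n\right)\) (the conditions \(\delta\alpha\delta=\alpha^{-1}\) and ``no cycle contains both \(k\) and \(-k\)'' are exactly what the form enforces), and the Weingarten weight appears as \(\mathrm{wg}\left(\delta\alpha,\delta\pi\right)\) as in the definition of \(f\).

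After the \(U\)-integration the surviving \(X\)-entries are threaded along \(\alpha\) (the adjoint word \(\varepsilon\) being realized as conjugation by \(\delta_{\varepsilon}\)), so their contribution is \(\mathbb{E}\left(\mathrm{Re}_{\mathrm{FD}\left(\pi\right)}\mathrm{tr}_{\mathrm{FD}\left(\pi\right)}\left(X_{1},\ldots,X_{n}\right)\right)\) for the various \(\pi\in\mathrm{PM}\left(n\right)\) produced, while the indices not pinned by the threading are summed freely. The free \(\left[N\right]\)-valued indices form loops enumerated by the cycles of \(K\left(\varphi_{\mathrm{tr}},\delta_{\varepsilon}\alpha\delta_{\varepsilon}\right)\), and the free \(\left\{1,-1\right\}\)-valued indices (from the quaternion \(2\times2\) blocks) form loops enumerated by the cycles of \(K\left(\varphi_{\mathrm{Re}},\delta_{\varepsilon}\alpha\delta_{\varepsilon}\right)\); carrying out these sums, together with the normalizing prefactors built into \(\mathrm{Re}_{\varphi}\) and \(\mathrm{tr}_{\varphi}\) and the Gram-matrix sign conventions, yields precisely \(N^{\chi\left(\varphi_{\mathrm{tr}},\delta_{\varepsilon}\alpha\delta_{\varepsilon}\right)-2\#\left(\varphi_{\mathrm{tr}}\right)}\) and \(\left(-2\right)^{\chi\left(\varphi_{\mathrm{Re}},\delta_{\varepsilon}\alpha\delta_{\varepsilon}\right)-2\#\left(\varphi_{\mathrm{Re}}\right)}\), by the definition of \(\chi\). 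Collecting terms, for each \(\alpha\) the accumulated coefficient of the family of inner moments is exactly \(\sum_{\pi}\left(-2N\right)^{\chi\left(\alpha,\pi\right)-\#\left(\alpha\right)}\mathrm{wg}\left(\delta\alpha,\delta\pi\right)\mathbb{E}\left(\mathrm{Re}_{\mathrm{FD}\left(\pi\right)}\mathrm{tr}_{\mathrm{FD}\left(\pi\right)}\left(X_{1},\ldots,X_{n}\right)\right)=f\left(\alpha\right)\), which is how \(f\) was defined; this recovers the right-hand side. The reorganization of the raw Weingarten output into this weighted form is a formal manipulation using the relation between \(\mathrm{Gr}\), \(\mathrm{Wg}\), \(\mathrm{wg}\), the cumulant \(c_{\pi,\rho,\sigma}\), and the moment--cumulant formula of Definition~\ref{definition: cumulants}.

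I expect the main obstacle to be the \(\mathrm{PM}\)-bookkeeping of the quaternionic structure: checking that the symplectic-form contractions genuinely land in \(\mathrm{PM}\left(n\right)\) rather than in \(S\left(\pm\left[n\right]\right)\), that conjugation by \(\delta_{\varepsilon}\) faithfully encodes the adjoint word \(\varepsilon\) through the doubled index set, and — most delicately — that the two loop-counts are governed by \(K\left(\varphi_{\mathrm{tr}},\delta_{\varepsilon}\alpha\delta_{\varepsilon}\right)\) and \(K\left(\varphi_{\mathrm{Re}},\delta_{\varepsilon}\alpha\delta_{\varepsilon}\right)\) with exactly the shifts \(-2\#\left(\varphi_{\mathrm{tr}}\right)\) and \(-2\#\left(\varphi_{\mathrm{Re}}\right)\). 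This uses the facts that \(K\left(\varphi_{+},\alpha\right)\in\mathrm{PM}\left(I\right)\) and the additivity of \(\chi\) in Lemma~\ref{lemma: chi}, together with careful tracking of the sign \(\left(-1\right)^{n/2}\) and of the factor \(\left(-2N\right)\) versus \(\left(2N\right)\) in \(\mathrm{Gr}\), \(\mathrm{wg}\), and \(\mathrm{Wg}\). Once the exponents and the Weingarten weights are matched, the identity follows.
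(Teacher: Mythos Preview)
The paper does not actually prove this proposition; it is quoted from the companion paper \cite{2014arXiv1412.0646R} (see the sentence ``Integrals of traces may be computed in terms of the matrix cumulants. See \cite{2014arXiv1412.0646R}:'' immediately preceding the statement). So there is no in-paper proof to compare against.

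That said, your outline is the standard route and is almost certainly what is carried out in the cited reference: exploit symplectic invariance to insert a Haar conjugation, pass to the \(2N\times 2N\) complex model, apply the \(\mathrm{Sp}(N)\) Weingarten formula, and then identify the resulting index contractions with the \(\mathrm{PM}\)-combinatorics so that the two families of free index sums become \(K\left(\varphi_{\mathrm{tr}},\cdot\right)\)- and \(K\left(\varphi_{\mathrm{Re}},\cdot\right)\)-cycle counts. Your identification of the delicate points is accurate: the verification that the contractions land in \(\mathrm{PM}(n)\), the role of \(\delta_{\varepsilon}\) in encoding the adjoint word, and the careful matching of the \((-2)\) and \(N\) exponents (including the shifts \(-2\#\left(\varphi_{\mathrm{Re}}\right)\), \(-2\#\left(\varphi_{\mathrm{tr}}\right)\) coming from the normalizations of \(\mathrm{Re}_{\varphi}\) and \(\mathrm{tr}_{\varphi}\)). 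One small point: the appeal to Lemma~\ref{lemma: chi} and to the cumulant \(c_{\pi,\rho,\sigma}\) at the end is not really needed for this identity itself---the reorganization into \(f(\alpha)\) is just the definition of \(f\) together with the Gram/Weingarten inversion \(\mathrm{Gr}\cdot\mathrm{Wg}=\mathrm{id}\), not a moment--cumulant argument. Otherwise the plan is sound.
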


\subsection{Freeness}

\begin{definition}
A {\em noncommutative probability space} is a pair \(\left(A,\phi_{1}\right)\) consisting of a unital algebra \(A\) and a linear functional \(\phi_{1}:A\rightarrow\mathbb{F}\) such that \(\phi_{1}\left(1_{A}\right)=1\).  (In this paper, \(\mathbb{F}\) will generally be \(\mathbb{H}\), although \(\mathbb{F}=\mathbb{C}\) and \(\mathbb{F}=\mathbb{R}\) are more frequently studied.)

Subalgebras \(A_{1},\ldots,A_{C}\subseteq A\) are {\em free} if \(\phi_{1}\left(x_{1}\cdots x_{p}\right)=0\) whenever, for all \(k\in\left[p\right]\), \(\phi_{1}\left(x_{k}\right)=0\) and \(x_{k}\in A_{w\left(k\right)}\) where \(w:\left[p\right]\rightarrow\left[C\right]\) is a word with \(w\left(1\right)\neq w\left(2\right)\neq\cdots\neq w\left(p\right)\).

Sets are free if the algebras they generate are free.  (If \(A\) is a \(\ast\)-algebra, the algebra generated by a set should also be closed under the operation \(\ast\).)
\end{definition}

\begin{definition}
If we have a set of random \(N\times N\) matrices \(\left\{X_{\lambda}\right\}_{\lambda\in\Lambda}\) for arbitrarily large \(N\in\mathbb{Z}\) (we will usually suppress the dimension in the notation), then we say they have a {\em limit distribution} \(\left\{x_{\lambda}\right\}_{\lambda\in\Lambda}\subseteq A\), where \(\left(A,\phi_{1}\right)\) is a noncommutative probability space, if, for any \(\lambda_{1},\ldots,\lambda_{p}\in\Lambda\):
\[\lim_{N\rightarrow\infty}\mathbb{E}\left(\mathrm{tr}\left(X_{\lambda_{1}}\cdots X_{\lambda_{p}}\right)\right)=\phi_{1}\left(x_{\lambda_{1}}\cdots x_{\lambda_{p}}\right)\]
and all higher cumulants of normalized traces of products of the \(X_{\lambda}\) vanish for large \(N\).

We may also wish the limit distribution to respect a structure such as an involution \(\ast:A\rightarrow A\), \(a\mapsto a^{\ast}\).  We incorporate this into the subscript notation: for each \(\lambda\in\Lambda\), we must have \(-\lambda\in\Lambda\), and \(X_{\lambda}^{\ast}=X_{-\lambda}\) (where here \(\ast\) is the conjugate transpose) and \(x_{\lambda}^{\ast}=x_{-\lambda}\).

In the quaternionic context, we would like the involution to be conjugate-linear and reverse the order of multiplication: \(\left(qx\right)^{\ast}=\overline{q}x^{\ast}\) and \(\left(xy\right)^{\ast}=y^{\ast}x^{\ast}\), for \(q\in\mathbb{H}\) and \(x,y\in A\).

Sets of matrices are {\em asymptotically free} if their limits in \(A\) are free.
\end{definition}

Second-order probability spaces are constructed in \cite{MR2216446, MR2294222, MR2302524} in order to model the covariance of traces, in order to study the fluctuations around the limits (central limit type results, as opposed to law of large numbers type results).

\begin{definition}
A {\em second-order probability space} is a triple \(\left(A,\phi_{1},\phi_{2}\right)\) where \(\left(A,\phi_{1}\right)\) is a noncommutative probability space and \(\phi_{2}:A^{2}\rightarrow\mathbb{F}\) is a bilinear function.  (In the complex and real cases, \(\phi_{2}\) is required to be tracial in each of its arguments.)

A quaternionic second-order probability space must also be a \(\ast\)-algebra, where \(\ast\) is an involution \(A\rightarrow A\), \(a\mapsto a^{\ast}\), which is conjugate linear and reverses the order of multiplication.  (In the complex case, the involution is not required, and in the real case the involution is linear.)

We say that subalgebras \(A_{1},\ldots,A_{C}\subseteq A\) are {\em quaternion second-order free} if they are free; and letting \(u:\left[p\right]\rightarrow\left[C\right]\) be a word with either \(u\left(1\right)\neq u\left(2\right)\neq\cdots\neq u\left(p\right)\neq u\left(1\right)\) or \(p=1\) and \(v:\left[q\right]\rightarrow\left[C\right]\) be a word with either \(v\left(1\right)\neq v\left(2\right)\neq\cdots\neq v\left(q\right)\neq v\left(1\right)\) or \(q=1\), when \(p\neq q\) or if \(p=q=1\) and \(u\left(1\right)\neq v\left(1\right)\):
\[\phi_{2}\left(x_{1}\cdots x_{p},y_{1}\cdots y_{q}\right)=0\]
and when \(p=q>2\):
\begin{multline*}
\phi_{2}\left(x_{1}\cdots x_{p},y_{1}\cdots y_{q}\right)
\\=\prod_{i=1}^{p}\mathrm{Re}\left(\phi_{1}\left(x_{i}y_{p+1-i}\right)\right)+\frac{1}{4}\sum_{k=1}^{p-1}\prod_{i=1}^{p}\mathrm{Re}\left(\phi_{1}\left(x_{i}y_{p+1-k-i}\right)\right)\\-\frac{1}{2}\prod_{i=1}^{p}\mathrm{Re}\left(\phi_{1}\left(x_{i}y_{i}^{\ast}\right)\right)+\frac{1}{4}\sum_{k=1}^{p-1}\prod_{i=1}^{p}\mathrm{Re}\left(\phi_{1}\left(x_{i}y_{k+i}^{\ast}\right)\right)\textrm{.}
\end{multline*}
(All indices are taken modulo the appropriate value to fall in the appropriate interval.)

We note that if the subalgebras are quaternion second-order free, then if \(x_{i}\) and \(y_{j}\) do not come from the same subalgebra, \(\phi_{1}\left(x_{i}y_{j}\right)=\phi_{1}\left(x_{i}y_{j}^{\ast}\right)=0\) (since they are free) so any term containing such an expression will vanish.

Sets are quaternion second-order free if the \(\ast\)-algebras they generate are quaternion second-order free.
\label{definition: second-order freeness}
\end{definition}

The expression for the value of \(\phi_{2}\) is motivated by Proposition~\ref{proposition: spokes}.  For the expression in the complex case, see \cite{MR2216446, MR2294222, MR2302524}, and for the expression in the real case, see \cite{MR3217665}.  Like the complex and real cases, the terms correspond to spoke diagrams (see Figure~\ref{figure: spokes}), but in the quaternionic case they appear with different weights and signs.

\begin{figure}
\label{figure: spokes}
\centering
\begin{tabular}{cc}
\input{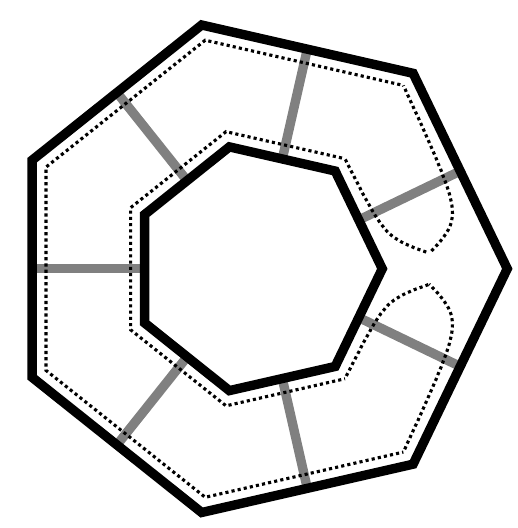_t}&\input{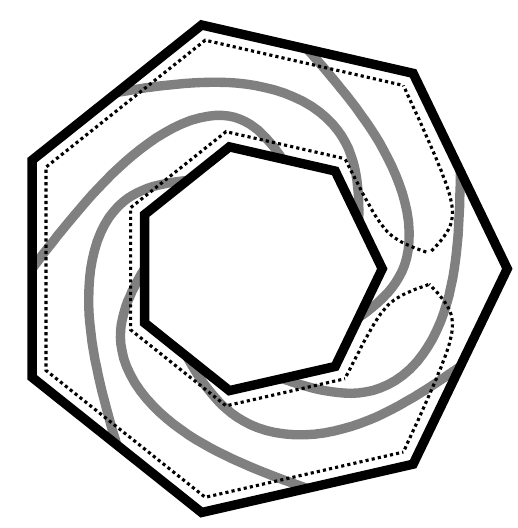_t}\\
\input{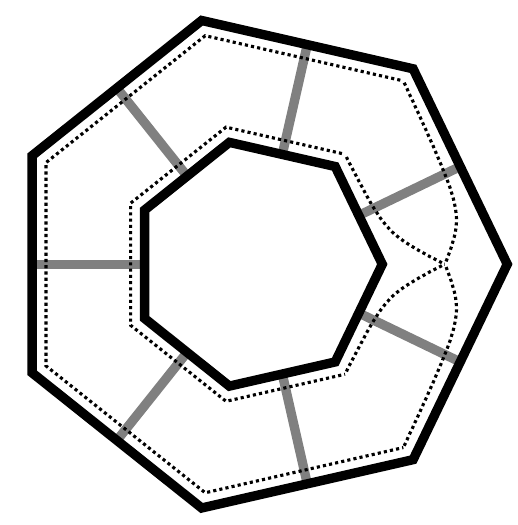_t}&\input{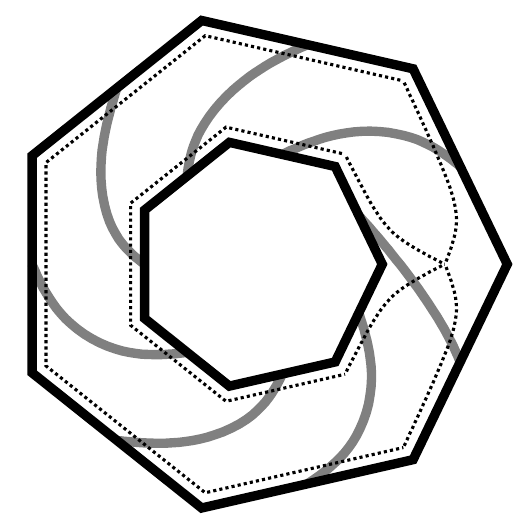_t}
\end{tabular}
\caption{The four types of spoke diagrams which appear in the quaternionic case.  The solid polygons represent the cycles of \(\mathrm{tr}\), and the dotted lines the cycle of \(\mathrm{Re}\).  The upper left is a sphere (\(k=p\) where \(k\) is the signed offset defined in Lemma~\ref{lemma: spokes}, \(\chi=2\)), the upper right a torus (\(k=2\), \(\chi=0\)), the lower left a projective plane (\(k=-1\), \(\chi=1\)), and the lower right a Klein bottle (\(k=-2\), \(\chi=0\)).}
\end{figure}

\begin{definition}
We say that matrices \(\left\{X_{\lambda}\right\}_{\lambda\in\Lambda}\) have a {\em second-order limit distribution} \(\left\{x_{\lambda}\right\}_{\lambda\in\Lambda}\subseteq A\) if this set is a limit distribution for \(\left(A,\phi_{1}\right)\), and in addition
\begin{multline*}
\lim_{N\rightarrow\infty}k_{2}\left(\mathrm{Tr}\left(X_{\lambda_{1}}\cdots X_{\lambda_{p}}\right),\mathrm{Tr}\left(X_{\lambda_{p+1}}\cdots X_{\lambda_{p+q}}\right)\right)
\\=\phi_{2}\left(x_{\lambda_{1}}\cdots x_{\lambda_{p}},x_{\lambda_{p+1}}\cdots x_{\lambda_{p+q}}\right)\textrm{,}
\end{multline*}
and all higher cumulants of unnormalized traces vanish for large \(N\).

In the quaternionic case, we require that \(A\) has a conjugate-linear involution \(\ast:A\rightarrow A\), \(x\mapsto x^{\ast}\), and for each \(\lambda\in\Lambda\), \(-\lambda\in\Lambda\), and \(X_{-\lambda}=X_{\lambda}^{\ast}\) and \(x_{-\lambda}=x_{\lambda}^{\ast}\).

We say that matrices are {\em asymptotically quaternion second-order free} if they have a second-order limit distribution and their images in the limit distribution are second-order free.
\end{definition}

We note that if matrices have a limit distribution, then if we expand a product of traces in cumulants:
\[\lim_{N\rightarrow\infty}a_{\pi}\left(\mathrm{tr}\left(X_{1}\right),\ldots,\mathrm{tr}\left(X_{n}\right)\right)=\lim_{N\rightarrow\infty}\sum_{\rho\preceq\pi}\prod_{V\in\rho}k_{\left|V\right|}\left(\mathrm{tr}\left(X_{i_{1}}\right),\ldots,\mathrm{tr}\left(X_{i_{\left|V\right|}}\right)\right)\textrm{.}\]
Since all cumulants \(k_{m}\) with \(m>1\) vanish as \(N\rightarrow\infty\), the only remaining term is
\[\lim_{N\rightarrow\infty}\mathbb{E}\left(\mathrm{tr}\left(X_{1}\right)\right)\cdots\mathbb{E}\left(\mathrm{tr}\left(X_{n}\right)\right)\textrm{.}\]
If they have a second-order limit distribution, then the difference from this value is \(O\left(N^{-1}\right)\).

\section{Cumulants}

\label{section: cumulants}

In this section we show a sense in which the cumulants of traces of products of symplectically invariant random matrices are sums over connected terms, and look at bounds on the order of cumulants of traces of expressions in independent matrices with limit distributions.

\begin{proposition}
\label{proposition: cumulants}
Let \(0<n_{1}<\cdots<n_{m}\), and let \(X_{1},\ldots,X_{n_{m}}:\Omega\rightarrow M_{N\times N}\left(\mathbb{H}\right)\) be symplectically invariant random quaternionic matrices.  Let \(w:\left[n_{m}\right]\rightarrow\left[C\right]\) be a word in colours \(\left[C\right]\), and let the matrices \(X_{k}\) with \(w\left(k\right)=c\) be independent from the other matrices, with matrix cumulants \(f_{c}:\mathrm{PM}\left(w^{-1}\left(c\right)\right)\rightarrow\mathbb{C}\).

Let
\[\zeta=\left(\infty,1,\ldots,n_{m}\right)\]
and let
\[\varphi=\left(\infty\right)\left(1,\ldots,n_{1}\right)\cdots\left(n_{m-1}+1,\ldots,n_{m}\right)\textrm{.}\]
Then, denoting the cycles of \(\mathrm{FD}\left(\pi\right)\) by \(\pi_{1},\ldots,\pi_{p}\), and for \(\sigma\in{\cal P}\left(p\right)\), letting \(\hat{\sigma}=\left\{\bigcup_{k\in V}\pi_{k}:V\in\sigma\right\}\):
\begin{multline*}
k_{n}\left(\mathrm{tr}\left(X_{1}\cdots X_{n_{1}}\right),\ldots,\mathrm{tr}\left(X_{n_{m-1}+1}\cdots X_{n_{m}}\right)\right)
\\=\sum_{\substack{\alpha\in\mathrm{PM}\left(n\right),\pi\in \mathrm{PM}\left(n\right)\\\rho:\left(\delta\alpha\right)\vee\left(\delta\pi\right)\preceq\rho\preceq\pm\ker\left(w\right)\\\sigma\in{\cal P}\left(p\right):\hat{\sigma}\preceq\pm\ker\left(w\right)\\\pm\varphi\vee\alpha\vee\rho\vee\hat{\sigma}=1_{\pm\left[n_{m}\right]}}}\left(-2\right)^{\chi\left(\zeta,\alpha\right)-2+\chi\left(\alpha,\pi\right)-\#\left(\alpha\right)}N^{\chi\left(\varphi,\alpha\right)-2m+\chi\left(\alpha,\pi\right)-\#\left(\alpha\right)}\\\times c_{\left(\delta\alpha\right)\vee\left(\delta\pi\right),\left(\delta\alpha\right)\vee\left(\delta\pi\right),\rho}k_{\sigma}\left(\left(\mathrm{Re}\circ\mathrm{tr}\right)_{\pi_{1}}\left(X_{1},\ldots,X_{n_{m}}\right),\ldots,\left(\mathrm{Re}\circ\mathrm{tr}\right)_{\pi_{p}}\left(X_{1},\ldots,X_{n_{m}}\right)\right)\textrm{.}
\end{multline*}
\end{proposition}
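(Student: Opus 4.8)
The plan is to expand \(k_n\) by the moment--cumulant formula and then substitute, in turn, the topological expansion, the definition of the normalized symplectic matrix cumulants, and the defining identity of the Weingarten cumulants \(c_{\pi,\rho,\sigma}\), reorganizing the resulting iterated sum so that the two ``cumulant'' structures and the connectivity constraint appear. Write \(T_i:=\mathrm{tr}\left(X_{n_{i-1}+1}\cdots X_{n_i}\right)\). By Definition~\ref{definition: cumulants}, \(k_n\left(T_1,\ldots,T_m\right)=\sum_{\tau\in{\cal P}(m)}\mu\left(\tau,1_m\right)a_\tau\left(T_1,\ldots,T_m\right)\). Since the matrices are symplectically invariant, every expectation of a product of traces is real, so Lemma~\ref{lemma: quaternion cumulant} together with the bracket-diagram lemma lets me write \(a_\tau\) as an honest product over blocks: \(a_\tau=\prod_{V\in\tau}\mathbb{E}\left(\mathrm{Re}_{\left.\zeta\right|_{\hat V}}\mathrm{tr}_{\left.\varphi\right|_{\hat V}}\left(X_1,\ldots,X_{n_m}\right)\right)\), where \(\hat V\) is the union of the index intervals of the traces indexed by \(V\), the \(\mathrm{Re}\)-permutation \(\left.\zeta\right|_{\hat V}\) is a single cycle, and the \(\mathrm{tr}\)-permutation \(\left.\varphi\right|_{\hat V}\) has one cycle per trace in \(V\).

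Next I would apply Proposition~\ref{proposition: topological expansion} (with trivial \(\varepsilon\)) to each factor, obtaining a sum over \(\alpha^{(V)}\in\mathrm{PM}\left(\hat V\right)\) weighted by \(\left(-2\right)^{\chi\left(\left.\zeta\right|_{\hat V},\alpha^{(V)}\right)-2}N^{\chi\left(\left.\varphi\right|_{\hat V},\alpha^{(V)}\right)-2|V|}f\left(\alpha^{(V)}\right)\); then use the multiplicativity lemma for independent matrices to factor \(f\left(\alpha^{(V)}\right)=\prod_c f_c\left(\left.\alpha^{(V)}\right|_{w^{-1}(c)\cap\hat V}\right)\) (which vanishes unless \(\alpha^{(V)}\preceq\pm\ker(w)\)); and finally substitute the definition of the normalized symplectic matrix cumulant, introducing a \(\pi_{V,c}\) for each \(\left(V,c\right)\) with a factor \(\left(-2N\right)^{\chi(\alpha,\pi)-\#(\alpha)}\) on the relevant block, a factor \(\mathrm{wg}\left(\left.\delta\alpha\right|_\cdot,\left.\delta\pi\right|_\cdot\right)\), and a factor \(\mathbb{E}\left(\mathrm{Re}_{\mathrm{FD}(\pi_{V,c})}\mathrm{tr}_{\mathrm{FD}(\pi_{V,c})}\left(\cdots\right)\right)\). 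Writing \(\alpha\) and \(\pi\) for the disjoint unions of the \(\alpha^{(V)}\) and the \(\pi_{V,c}\), and \(\pi_1,\ldots,\pi_p\) for the cycles of \(\mathrm{FD}(\pi)\), three reorganizations now run in parallel. First, the exponents of \(-2\) and \(N\) are assembled by Lemma~\ref{lemma: chi}: its single-cycle version applied to the \(\left.\zeta\right|_{\hat V}\)-terms gives \(\chi(\zeta,\alpha)-2\), its \(\varphi_+\preceq\pi\) version applied to the \(\left.\varphi\right|_{\hat V}\)-terms gives \(\chi(\varphi,\alpha)-2m\), and the same version applied to the \(\chi(\alpha,\pi)\)-terms (using \(\alpha\preceq\pm\left(\ker(w)\wedge\hat\tau\right)\)) gives \(\chi(\alpha,\pi)-\#(\alpha)\), reproducing exactly the prefactor in the statement. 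Second, the product \(\prod_{V,c}\mathrm{wg}\left(\left.\delta\alpha\right|_U,\left.\delta\pi\right|_U\right)\) over the blocks \(U\) of \(\pm\left(\ker(w)\wedge\hat\tau\right)\) is inverted, via the defining identity for \(c_{\pi',\rho',\sigma'}\) with \(\pi_1=\delta\alpha\), \(\pi_2=\delta\pi\), becoming \(\sum_\rho c_{(\delta\alpha)\vee(\delta\pi),(\delta\alpha)\vee(\delta\pi),\rho}\) with \((\delta\alpha)\vee(\delta\pi)\preceq\rho\preceq\pm\left(\ker(w)\wedge\hat\tau\right)\). Third, the expectations \(\mathbb{E}\left(\mathrm{Re}_{\mathrm{FD}(\pi_{V,c})}\mathrm{tr}_{\mathrm{FD}(\pi_{V,c})}\left(\cdots\right)\right)\), multiplied over \(\left(V,c\right)\), recombine — using independence across colours and across blocks of \(\tau\), and reality of the factors — into \(a_\nu\left(\left(\mathrm{Re}\circ\mathrm{tr}\right)_{\pi_1},\ldots,\left(\mathrm{Re}\circ\mathrm{tr}\right)_{\pi_p}\right)\), where \(\nu\in{\cal P}(p)\) groups \(\pi_i\) and \(\pi_j\) exactly when they lie in the same block of \(\pm\left(\ker(w)\wedge\hat\tau\right)\); the moment--cumulant formula then rewrites \(a_\nu=\sum_{\sigma\preceq\nu}k_\sigma\), so that \(k_\sigma\) survives only for \(\hat\sigma\preceq\pm\left(\ker(w)\wedge\hat\tau\right)\).

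At this stage \(k_n\left(T_1,\ldots,T_m\right)\) has become \(\sum_{\alpha,\pi,\rho,\sigma}\left(\text{asserted summand}\right)\sum_\tau\mu\left(\tau,1_m\right)\), the inner sum running over all \(\tau\in{\cal P}(m)\) compatible with the fixed data; since \(\pm\varphi\preceq\pm\hat\tau\) automatically and \(\pi\preceq\pm\hat\tau\) is forced by \(\delta\pi\preceq\rho\), this is the set of \(\tau\) with \(\pm\hat\tau\succeq\theta:=\pm\varphi\vee\alpha\vee\rho\vee\hat\sigma\), so by the Möbius property the inner sum is \(1\) if \(\theta\) connects all \(m\) traces and \(0\) otherwise. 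The final point is that ``\(\theta\) connects all \(m\) traces'' is equivalent to ``\(\theta=1_{\pm\left[n_m\right]}\)'': since \(\delta\alpha\preceq\rho\preceq\theta\) while \(\alpha\preceq\theta\), for every \(k\) both \(\alpha(k)\) and \(-\alpha(k)=\delta\alpha(k)\) lie in the same block of \(\theta\), so every block of \(\theta\) is \(\delta\)-invariant; hence \(\theta\) is the \(\pm\)-lift of a partition of the traces, and connecting all traces forces that partition to be \(1_m\). This yields precisely the constraints \(\pm\varphi\vee\alpha\vee\rho\vee\hat\sigma=1_{\pm\left[n_m\right]}\) and \(\rho\preceq\pm\ker(w)\) (the \(\pm\hat\tau\)-halves of the ranges of \(\rho\), \(\sigma\), and \(\alpha\) having been absorbed into the connectivity constraint), which is the statement.

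I expect the main obstacle to be this last reorganization: making the Weingarten factors assemble into exactly \(c_{(\delta\alpha)\vee(\delta\pi),(\delta\alpha)\vee(\delta\pi),\rho}\) (with the middle argument equal to the first), and verifying that the colour-and-block refinement conditions split cleanly enough that the outer Möbius inversion produces the genuine connectivity condition \(\pm\varphi\vee\alpha\vee\rho\vee\hat\sigma=1_{\pm[n_m]}\) rather than the a~priori weaker ``\(\theta\) connects all traces''. The reality of all the intervening expectations — which is what lets Lemma~\ref{lemma: quaternion cumulant} apply and what legitimizes interchanging products with expectations — is the hypothesis that makes the whole scheme go through.
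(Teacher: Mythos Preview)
Your proposal is correct and follows essentially the same route as the paper: expand the classical cumulant as \(\sum_{\tau}\mu(\tau,1_m)a_\tau\), apply the topological expansion blockwise, feed in the definition of the normalized matrix cumulants (together with colour multiplicativity) to introduce \(\pi\) and the Weingarten factors, expand those Weingarten products and the trace expectations in their respective cumulants (\(c_{\cdot,\cdot,\rho}\) and \(k_\sigma\)), reassemble the Euler-characteristic exponents via Lemma~\ref{lemma: chi}, and finally perform the M\"obius sum over \(\tau\) to obtain the connectivity constraint. The paper's proof is terser---it writes the blockwise expansion in one line and does not name the intermediate moment \(a_\nu\) or argue the \(\delta\)-invariance of \(\theta\) explicitly---but the logic is identical, and the two points you flag as the ``main obstacle'' (that the Weingarten product unfolds into \(\sum_\rho c_{(\delta\alpha)\vee(\delta\pi),(\delta\alpha)\vee(\delta\pi),\rho}\) with the middle argument equal to the first, and that the M\"obius sum produces the genuine condition \(\theta=1_{\pm[n_m]}\)) are handled exactly as you describe.
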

\begin{proof}
For \(\tau\in{\cal P}\left(m\right)\), we define \(\hat{\tau}\in{\cal P}\left(n_{m}\right)\) by \(\hat{\tau}:=\left\{\bigcup_{k\in V}\left[n_{k-1}+1,n_{k}\right]:V\in\tau\right\}\).  Using Proposition~\ref{proposition: topological expansion}, we compute the summand in the cumulant corresponding to \(\tau\):
\begin{multline*}
\mu\left(\tau,1_{m}\right)\sum_{\alpha\in\mathrm{PM}\left(\hat{\tau}\wedge\ker\left(w\right)\right)}\left(-2\right)^{\chi\left(\zeta,\alpha\right)-2}N^{\chi\left(\varphi,\alpha\right)-2m}
\\\times\sum_{\pi\in\mathrm{PM}\left(\hat{\tau}\wedge\ker\left(w\right)\right)}\left(-2N\right)^{\chi\left(\alpha,\pi\right)-\#\left(\alpha\right)}\left(\prod_{U\in\hat{\tau}\wedge\ker\left(w\right)}\mathrm{wg}\left(\left.\delta\alpha\right|_{\pm U},\left.\delta\pi\right|_{\pm U}\right)\right)
\\\times\left(\prod_{V\in\hat{\tau}\wedge\ker\left(w\right)}\mathbb{E}\left(\left(\mathrm{Re}\circ\mathrm{tr}\right)_{\mathrm{FD}\left(\left.\pi\right|_{\pm V}\right)}\left(X_{1},\ldots,X_{n_{m}}\right)\right)\right)\textrm{.}
\end{multline*}
(The Euler characteristics are calculated using Lemma~\ref{lemma: chi}: \(\sum_{V\in\hat{\tau}}\chi\left(\varphi,\alpha\right)=\chi\left(\varphi,\alpha\right)\) and \(\sum_{V\in\hat{\tau}}\left(\chi\left(\left.\zeta\right|_{V},\alpha\right)-2\right)=\chi\left(\zeta,\alpha\right)-2\).)

We collect terms associated to a fixed \(\alpha\in\mathrm{PM}\left(\ker\left(w\right)\right)\) (there will be one for each \(\tau\in{\cal P}\left(m\right)\) such that \(\pm\hat{\tau}\succeq\alpha\)).  We expand the Weingarten function and expected value in cumulants:
\[\prod_{U\in\hat{\tau}\wedge\ker\left(w\right)}\mathrm{wg}\left(\left.\delta\alpha\right|_{\pm U},\left.\delta\pi\right|_{\pm U}\right)=\sum_{\rho:\left(\delta\alpha\right)\vee\left(\delta\pi\right)\preceq\rho\preceq\pm\hat{\tau}\wedge\ker\left(w\right)}c_{\left(\delta\alpha\right)\vee\left(\delta\pi\right),\left(\delta\alpha\right)\vee\left(\delta\pi\right),\rho}\]
(so all \(\rho\preceq\pm\ker\left(w\right)\), and a term associated with \(\rho\) appears in the expansion for \(\tau\) if \(\pm\hat{\tau}\succeq\rho\)).  We likewise expand the expected value in cumulants:
\begin{multline*}
\prod_{V\in\hat{\tau}\wedge\ker\left(w\right)}\mathbb{E}\left(\left(\mathrm{Re}\circ\mathrm{tr}\right)_{\mathrm{FD}\left(\left.\pi\right|_{\pm V}\right)}\left(X_{1},\ldots,X_{n_{m}}\right)\right)
\\=\sum_{\substack{\sigma\in{\cal P}\left(p\right)\\\pm\hat{\sigma}\preceq\pm\hat{\tau}\wedge\ker\left(w\right)}}k_{\sigma}\left(\left(\mathrm{Re}\circ\mathrm{tr}\right)_{\pi_{1}}\left(X_{1},\ldots,X_{n_{m}}\right),\ldots,\left(\mathrm{Re}\circ\mathrm{tr}\right)_{\pi_{p}}\left(X_{1},\ldots,X_{n_{m}}\right)\right)
\end{multline*}
(so all \(\sigma\) have \(\hat{\sigma}\preceq\pm\ker\left(w\right)\) and the term associated with \(\sigma\) appears in the expansion for \(\tau\) if \(\pm\hat{\tau}\succeq\hat{\sigma}\)).  Collecting terms associated to a fixed \(\alpha\), \(\rho\), and \(\sigma\), the sum of the M\"{o}bius functions (noting that \(\mu\left(\tau,1_{m}\right)=\mu\left(\pm\hat{\tau},1_{\pm\left[n_{m}\right]}\right)\)) is
\[\sum_{\tau:\pm\hat{\tau}\succeq\pm\varphi\vee\alpha\vee\rho\vee\hat{\sigma}}\mu\left(\pm\hat{\tau},1_{\pm\left[n_{m}\right]}\right)\textrm{,}\]
which vanishes unless \(\pm\varphi\vee\alpha\vee\rho\vee\hat{\sigma}=1_{\pm\left[n_{m}\right]}\).
\end{proof}

\begin{corollary}
\label{corollary: cumulants}
If independent sets of random matrices have limit distributions, the the \(\ast\)-algebra they generate also has a limit distribution, and if they have second-order limit distributions, then the \(\ast\)-algebra they generate also has a second-order limit distribution.

If \(m>1\), then the \(m\)th cumulant is \(O\left(N^{-m}\right)\), and in the notation of Proposition~\ref{proposition: cumulants}, terms of order \(N^{-m}\) have \(\alpha\in\mathrm{PM}_{\mathrm{nc}}\left(\varphi\right)\) and \(\pi\in\mathrm{PM}_{\mathrm{nc}}\left(\alpha\right)\).
\end{corollary}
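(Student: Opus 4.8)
The plan is to extract everything from Proposition~\ref{proposition: cumulants} by bounding the $N$-order of each summand and then locating the extremal ones. First note that each generator of the $\ast$-algebra is symplectically invariant, and since the colour classes are independent, so is any product of generators; hence Proposition~\ref{proposition: cumulants} applies to $k_m$ of traces of arbitrary products of generators. The two ``limit distribution'' assertions are then the cases $m=1$ and $m=2$ of the order analysis: that $\mathbb{E}(\mathrm{tr}(\cdots))$ of an arbitrary product of generators converges, that $k_2(\mathrm{Tr},\mathrm{Tr})=N^2k_2(\mathrm{tr},\mathrm{tr})$ converges, and that $k_m(\mathrm{Tr},\dots,\mathrm{Tr})\to0$ for $m\geq2$ (needed for the first claim, and in particular for $m\geq3$, needed for the second).

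For the order bound I would collect the three $N$-dependent factors of a summand. The explicit power is $N^{\chi(\varphi,\alpha)-2m+\chi(\alpha,\pi)-\#(\alpha)}$. By Lemma~\ref{lemma: Weingarten}, $c_{(\delta\alpha)\vee(\delta\pi),(\delta\alpha)\vee(\delta\pi),\rho}=O(N^{2(\#(\rho)-\#((\delta\alpha)\vee(\delta\pi)))})$; since $\rho\succeq(\delta\alpha)\vee(\delta\pi)$ this exponent is $\geq0$, and it is $0$ (with the factor converging to a product of signed Catalan numbers) exactly when $\rho=(\delta\alpha)\vee(\delta\pi)$. Since each block of $\sigma$ is monochromatic ($\hat\sigma\preceq\pm\ker(w)$), the factor $k_\sigma$ is a product over $V\in\sigma$ of cumulants $k_{|V|}$ of normalised traces of a single colour; the limit-distribution hypothesis makes $k_{|V|}=O(1)$ for $|V|=1$ and $o(1)$ for $|V|\geq2$, with the second-order hypothesis sharpening this to $O(N^{-2})$ for $|V|=2$ and $o(N^{-|V|})$ for $|V|\geq3$. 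Hence $k_\sigma=O(1)$, with $\sigma=0_p$ the only case of that order. The remaining work is to combine these with the topological inequalities: bound $\chi(\varphi,\alpha)$ and $\chi(\alpha,\pi)$ by twice the relevant numbers of connected components via Lemma~\ref{lemma: spheres}, and use the connectivity requirement $\pm\varphi\vee\alpha\vee\rho\vee\hat\sigma=1_{\pm[n_m]}$ together with Lemmas~\ref{lemma: join} and \ref{lemma: chi} to control how component counts fall under successive joins; this should give total $N$-order $\leq-m$ for $m\geq2$ and $\leq0$ for $m=1$.

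The equality discussion yields the last sentence of the corollary. If a summand has $N$-order exactly $-m$, every inequality above is tight: in particular $\chi(\varphi,\alpha)=2\#(\pm\varphi\vee\alpha)$, i.e.\ $\alpha\in\mathrm{PM}_{\mathrm{nc}}(\varphi)$, the corresponding bound for $\chi(\alpha,\pi)$ is tight, i.e.\ $\pi\in\mathrm{PM}_{\mathrm{nc}}(\alpha)$, while also $\rho=(\delta\alpha)\vee(\delta\pi)$, and $\sigma=0_p$ unless $m\geq3$. For $m=1$ the order-$0$ summands are finite in number and each converges (Weingarten cumulant to its Catalan limit; each $\mathbb{E}((\mathrm{Re}\circ\mathrm{tr})_{\pi_i})$ by the single-colour limit distribution), so the $\ast$-algebra has a limit distribution, and since $k_m(\mathrm{tr},\dots,\mathrm{tr})=O(N^{-m})\to0$ for $m\geq2$ the higher-cumulant condition holds too. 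For $m=2$ the order-$N^{-2}$ summands are again finite in number, the $\sigma=0_p$ ones converging as before and the $\sigma\neq0_p$ ones converging by the single-colour \emph{second-order} limit distributions (so that $N^2$ times the relevant single-colour $k_2(\mathrm{tr},\mathrm{tr})$ converges); hence $k_2(\mathrm{Tr},\mathrm{Tr})$ converges. For $m\geq3$ one checks that a summand of nominal order $N^{-m}$ either has $\sigma=0_p$, in which case the connectivity constraint cannot be met without a further loss so the summand is in fact $o(N^{-m})$, or carries a single-colour cumulant of order $\geq3$ (hence $o(1)$) inside $k_\sigma$, again giving $o(N^{-m})$; so $k_m(\mathrm{Tr},\dots,\mathrm{Tr})\to0$ and the $\ast$-algebra has a second-order limit distribution.

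The main obstacle is the final step of the order bound, namely extracting the sharp constant $-m$ from the raw exponent $\chi(\varphi,\alpha)-2m+\chi(\alpha,\pi)-\#(\alpha)$ together with the Weingarten and $k_\sigma$ orders. The $-2m$ records the $m$ trace-faces and the point at infinity and must be offset by the number of components identified in forming $\pm\varphi\vee\alpha\vee\rho\vee\hat\sigma=1_{\pm[n_m]}$; the delicacy is that each identification must pay for itself exactly, so that a connected configuration lands precisely at $-m$ while one connected only ``with slack'' lands strictly lower, and that the counting of Lemmas~\ref{lemma: join} and \ref{lemma: chi} must be carried out over the two-sheeted ($\delta$) structure, where a single block of $\rho$ or $\hat\sigma$ can identify several pairs of $\pm$-components at once. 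The real-part powers of $2$ do not affect the $N$-order and can be bounded separately by applying Lemma~\ref{lemma: spheres} to $\zeta$.
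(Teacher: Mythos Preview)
Your approach is essentially the paper's: expand via Proposition~\ref{proposition: cumulants}, bound the Euler characteristics by Lemma~\ref{lemma: spheres}, chain the component counts through Lemma~\ref{lemma: join}, add in the Weingarten-cumulant order from Lemma~\ref{lemma: Weingarten} and the $k_\sigma$ order from the limit-distribution hypotheses, and read off equality cases. Two points need tightening.

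First, a sign slip: since $\rho\succeq(\delta\alpha)\vee(\delta\pi)$ we have $\#(\rho)\leq\#((\delta\alpha)\vee(\delta\pi))$, so the Weingarten exponent $2(\#(\rho)-\#((\delta\alpha)\vee(\delta\pi)))$ is $\leq 0$, not $\geq 0$. Your subsequent use (that the maximal order occurs at $\rho=(\delta\alpha)\vee(\delta\pi)$) is consistent with the correct sign, so this is only a misstatement, but it matters for the chain of inequalities: the negative Weingarten order is precisely what pays for the drop in $\#(\pm\varphi\vee\alpha\vee\pi)$ when $\rho$ is used to achieve connectivity, and the paper makes that trade-off explicit via Lemma~\ref{lemma: join} to obtain $\chi(\varphi,\alpha)+\chi(\alpha,\pi)+2\#(\rho)-2\#((\delta\alpha)\vee(\delta\pi))\leq 2\#(\pm\varphi\vee\alpha\vee\pi\vee\rho)+2\#(\pm\alpha)$.

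Second, your $m\geq 3$ dichotomy is incomplete: ``either $\sigma=0_p$ or some block of $\sigma$ has size $\geq 3$'' omits the case where the non-singleton blocks of $\sigma$ are all pairs. The paper handles this case separately: if $\pm\varphi\vee\alpha\vee\pi\vee\rho$ has $b>1$ blocks, each must contain some $\pi_k$ lying in a non-singleton block of $\sigma$, so at least $b$ elements are in such blocks; but if those blocks are all pairs, then strictly more than $b$ elements are in them (pairs have two elements each, and $b$ of them cannot be hit by exactly one element apiece while still connecting everything), forcing $k_\sigma=O(N^{-b'})$ with $b'>b$ and hence total order strictly below $N^{-m}$. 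Your sketch should include this third branch.
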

\begin{proof}
By Lemma~\ref{lemma: spheres}, we know that \(\chi\left(\varphi,\alpha\right)\leq 2\#\left(\pm\varphi\vee\alpha\right)\) and \(\chi\left(\alpha,\pi\right)\leq 2\#\left(\pm\alpha\vee\pi\right)\), so \(\chi\left(\varphi,\alpha\right)+\chi\left(\alpha,\pi\right)\leq 2\#\left(\pm\varphi\vee\alpha\vee\pi\right)+2\#\left(\pm\alpha\right)\) (Lemma~\ref{lemma: join}).  Similarly, \(\left(\delta\alpha\right)\vee\left(\delta\pi\right)\preceq\pm\varphi\vee\alpha\vee\pi\), so \(\chi\left(\varphi,\alpha\right)+\chi\left(\alpha,\pi\right)+2\#\left(\rho\right)-2\#\left(\left(\delta\alpha\right)\vee\left(\delta\pi\right)\right)\leq 2\#\left(\pm\varphi\vee\alpha\vee\pi\vee\rho\right)+2\#\left(\pm\alpha\right)\).  Thus if \(m=1\), the limit exists, so the matrices have a limit distribution

Since the \(X_{k}\), \(w\left(k\right)=c\) have a second-order limit distribution, then if block \(V\in\sigma\) has \(\left|V\right|>1\), the cumulant associated with the block \(V\) is \(O\left(N^{-\left|V\right|}\right)\), and \(\lim_{N\rightarrow\infty}N^{\left|V\right|}k_{\left|V\right|}\) vanishes for \(\left|V\right|>2\).  (Mixed cumulants always vanish, since the terms are independent.)  In a nonzero term, \(\pm\varphi\vee\alpha\vee\rho\vee\hat{\sigma}=1_{\pm\left[n_{m}\right]}\), so if \(\pm\varphi\vee\alpha\vee\pi\vee\rho\) has more than one block, each block must contain at least one \(\pi_{k}\) contained in a block of \(\hat{\sigma}\) containing more than one element.  We calculate that if \(m>1\), the \(m\)th cumulant is \(O\left(N^{-m}\right)\).  Furthermore, if \(m>2\), then either a block of \(\sigma\) with more than one element has more than two elements, in which case the limit of \(N^{m}\) times the cumulant vanishes, or the blocks of \(\sigma\) with more than one element all have two elements, in which case more than \(\#\left(\varphi\vee\alpha\vee\pi\vee\rho\right)\) elements are contained in such blocks, and the cumulant is \(O\left(N^{-m^{\prime}}\right)\) for some \(m^{\prime}>m\).  Thus the generated algebra has a second-order limit distribution.

A term from the cumulant which does not vanish faster than \(N^{-m}\) (or \(N^{0}\) when \(m=1\)) must have \(\chi\left(\varphi,\alpha\right)=2\#\left(\varphi\vee\alpha\right)\) and \(\chi\left(\alpha,\pi\right)=2\#\left(\alpha,\pi\right)\), which demonstrates the second part.
\end{proof}

\begin{example}
The diagram shown in Figure~\ref{figure: map} corresponds to an \(\alpha\in\mathrm{PM}_{\mathrm{nc}}\left(\varphi\right)\), as would appear in the expansion of 
\[k_{3}\left(\mathrm{tr}\left(X_{1}X_{2}X_{3}X_{4}X_{5}X_{6}\right),\mathrm{tr}\left(X_{7}X_{8}X_{9}X_{10}X_{11}X_{12}X_{13}\right),\mathrm{tr}\left(X_{14}X_{15}X_{16}\right)\right)\textrm{.}\]
Since \(\pm\varphi\vee\alpha=\left\{\pm\left[13\right],\pm\left[14,16\right]\right\}\), either \(\pi\preceq\alpha\) or \(\pi\) connects at least one cycle of \(\alpha\) from \(\pm\left[13\right]\) to one cycle from from \(\pm\left[14,16\right]\), as well as their paired cycles (so \(\chi\left(\alpha,\pi\right)=\#\left(\alpha\right)-2\)).  In the first case, either \(\rho\) or \(\sigma\) must connect \(\pm\left[13\right]\) and \(\pm\left[14,16\right]\).

We calculate
\begin{multline*}
K\left(\zeta_{+},\alpha\right)\\=\left(1,16,15,13,-3,6\right)\left(-6,3,-13,-15,-16,-1\right)\left(2,4,-10\right)\left(10,-4,-2\right)\\\left(5\right)\left(-5\right)\left(7\right)\left(-7\right)\left(8\right)\left(-8\right)\left(9\right)\left(-9\right)\left(11,12\right)\left(-12,-11\right)\left(14\right)\left(-14\right)\textrm{,}
\end{multline*}
so \(\chi\left(\zeta,\alpha\right)=0\), contributing a factor of \(\frac{1}{4}\).
\end{example}

\section{Second-Order Freeness}
\label{section: second-order freeness}

\begin{lemma}
Let \(p>1\), let \(\varphi_{+}=\left(1,\ldots,p\right)\in S_{p}\), let \(\varphi_{-}=\left(-1,\ldots,-p\right)\), and let \(\varphi=\varphi_{+}\varphi_{-}^{-1}\).  Let \(\alpha\in\mathrm{PM}_{\mathrm{nc}}\left(\varphi\right)\) have no cycle containing both \(k\) and \(k+1\), \(k\in\left[p-1\right]\).  Then \(\alpha\) has at least one cycle with only one element.  If, in addition, \(p\) is not in the same cycle of \(\alpha\) as \(1\), then \(\alpha\) must have at least two cycles with only one element.
\label{lemma: singlets}
\end{lemma}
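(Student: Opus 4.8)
The plan is to strip away the matrix‑combinatorial apparatus and reduce the statement to an elementary fact about non‑crossing partitions. First I would unwind $\mathrm{PM}_{\mathrm{nc}}\left(\varphi\right)$ in this case. Since $\varphi_+=\left(1,\ldots,p\right)$ has a single cycle, $\varphi$ has exactly two cycles, $\left(1,\ldots,p\right)$ and its mirror, so $\pm\varphi=1_{\pm[p]}$ and hence $\pm\varphi\vee\alpha=1_{\pm[p]}$ for every $\alpha$; thus $\alpha\in\mathrm{PM}_{\mathrm{nc}}\left(\varphi\right)$ is equivalent to $\chi\left(\varphi,\alpha\right)=2$. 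By the lemma characterising $\chi\left(\varphi,\alpha\right)=2$ when $\#\left(\varphi_+\right)=1$, this means $\alpha$ does not connect the two cycles of $\varphi$ and $\beta:=\left.\alpha\right|_{[p]}\in S_p$ is disc noncrossing on $\varphi_+$. A fixed point $j\in[p]$ of $\beta$ yields singleton cycles $\left(j\right)$ and $\left(-j\right)$ of $\alpha$; the hypothesis that no cycle of $\alpha$ contains $k$ and $k+1$, and (for the second assertion) that $p$ and $1$ lie in distinct cycles of $\alpha$, translate verbatim into statements about $\beta$. So it suffices to show that $\beta$ has at least one fixed point, and at least two under the extra hypothesis.

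Let $\pi$ be the partition of $[p]$ given by the cycles of $\beta$. Because $\beta$ is not disc crossing, $\pi$ is non‑crossing for the linear order $1<\cdots<p$: two crossing blocks would give $a<b<c<d$ with $a,c$ in one block and $b,d$ in the other, whence $\left.\beta\right|_{\{a,b,c,d\}}=\left(a,c\right)\left(b,d\right)$ and $\left.\varphi_+\right|_{\{a,b,c,d\}}=\left(a,b,c,d\right)$, contradicting disc noncrossing. Under the translation, no block of $\pi$ contains $\{k,k+1\}$ for $k\in[p-1]$ (and, in the second case, no block contains $\{1,p\}$), and a fixed point of $\beta$ is a singleton block of $\pi$.

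The key observation is that if $B$ is a block of $\pi$ for which $\max B$ is smallest possible, then $B$ is an interval $\{\min B,\ldots,\max B\}$: an element $y$ with $\min B<y<\max B$ and $y\notin B$ would, by non‑crossing, have its whole block contained in $\left(\min B,\max B\right)$, contradicting minimality of $\max B$. An interval of size at least $2$ contains some pair $\{k,k+1\}$ with $k\in[p-1]$, so $B$ is a singleton; this proves the first assertion. For the second, note also that the block $B'$ with $\min B'$ largest possible is, symmetrically, a singleton $\{s\}$; write $B=\{t\}$. Then $t<p$ and $s>1$, for otherwise every block would contain $p$, resp.\ $1$, i.e.\ $\pi$ would be the one‑block partition, whose block contains $\{1,2\}$. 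Suppose $t=s$. Then every block has maximum $\ge t$ and minimum $\le t$, so the block $B_1$ of $1$ and the block $B_p$ of $p$ each meet both $\{1,\ldots,t-1\}$ and $\{t+1,\ldots,p\}$; they are distinct since $\{1,p\}$ lies in no block; and putting $u=\max B_1$, $v=\min B_p$ one checks $1<v<u<p$ with $1,u$ in $B_1$ and $v,p$ in $B_p$, which is a crossing — impossible. Hence $t\ne s$, so $\{t\}$ and $\{s\}$ are two distinct singleton blocks and $\beta$ has two fixed points, completing the proof.

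I expect the only delicate point to be this last step: extracting $1<v<u<p$ from the assumption $t=s$ and ruling out the degenerate coincidences ($v=1$, $u=p$, $v=u=t$, or $B_1=B_p$), each of which is excluded using disjointness of blocks together with $t=s>1$, $t<p$, and the no‑$\{1,p\}$ hypothesis. The reduction to $\beta$ and the observation that a block of smallest maximum is an interval are routine once $\mathrm{PM}_{\mathrm{nc}}\left(\varphi\right)$ has been unwound.
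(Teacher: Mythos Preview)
Your argument is correct and takes a genuinely different route from the paper's. After the common reduction to \(\beta=\left.\alpha\right|_{[p]}\) being disc noncrossing on \(\varphi_{+}\), the paper works directly with the permutation: it locates an \(a\) with \(\alpha^{-1}(a)>a\) (using the disc-standard condition to guarantee such an \(a\) exists), observes that the elements \(\varphi(a),\ldots,\varphi^{k}(a)\) strictly between \(a\) and \(\alpha^{-1}(a)\) are trapped in that gap by the noncrossing and nonstandard conditions, and recurses on this strictly smaller interval until a fixed point is found; for the second assertion it invokes the cyclic symmetry now available and repeats the search on a disjoint portion of the cycle. You instead forget the permutation and pass to the noncrossing partition \(\pi\) of cycles, then use an extremal principle: the block with smallest maximum is forced to be an interval, hence a singleton; symmetrically the block with largest minimum is a singleton; and a short crossing argument shows these two singletons are distinct when \(\{1,p\}\) lies in no block. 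Your route is cleaner in two respects: it never uses the disc-nonstandard condition (only ``not disc crossing'' is needed to make \(\pi\) noncrossing), and the second assertion is handled explicitly rather than by a terse ``repeat the argument''. The paper's approach, on the other hand, keeps track of the permutation orientation, which is what is actually needed later in Lemma~\ref{lemma: spokes}.
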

\begin{proof}
We can find an \(a\in\left[p\right]\) such that \(\alpha^{-1}\left(a\right)>a\) (if all elements of \(\left[p\right]\) are all fixed points, the result follows, and if \(\alpha^{-1}\left(a\right)<a\), then \(\alpha^{-1}\left(\alpha^{-1}\left(a\right)\right)>\alpha^{-1}\left(a\right)\) since \(\alpha\) is disc standard).  Then \(\alpha^{-1}\left(a\right)\neq\varphi\left(a\right)\), so there are \(\varphi\left(a\right),\ldots,\varphi^{k}\left(a\right)\) which do not share an orbit with \(a\) or any other elements of \(\left[p\right]\) (by the disc-nonstandard and disc-noncrossing conditions, respectively).  If any of these is not a fixed point, then we may repeat the argument with a \(k^{\prime}<k\).  Thus there must be at least one fixed point.

If \(p\) and \(1\) are not in the same orbit of \(\alpha\), then we may repeat the argument taking \(\varphi\left(a\right)\) as \(a\), which gives us a distinct set of elements, so we may find at least one more fixed point.
\end{proof}

\begin{proposition}
Independent sets of symplectically invariant quaternionic random matrices with a limit distribution are asymptotically free.
\label{proposition: free}
\end{proposition}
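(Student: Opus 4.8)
\emph{Plan.} I would verify the characterization of freeness by vanishing of alternating centred products, reducing to a single trace and applying the topological expansion. Since \(A_{c}\) is generated by the limits of the colour-\(c\) matrices and their adjoints, any \(\phi_{1}\)-centred \(a_{k}\in A_{w(k)}\) is the limit of \(\mathring{Y}_{k}:=Y_{k}-\mathbb{E}\left(\mathrm{tr}\left(Y_{k}\right)\right)I_{N}\), where \(Y_{k}\) is a \(\ast\)-polynomial in the colour-\(w(k)\) matrices. A \(\ast\)-polynomial in symplectically invariant matrices is again symplectically invariant (conjugation commutes with multiplication, commutes with the adjoint, and fixes scalar matrices), so each \(Z_{k}:=\mathring{Y}_{k}\) is symplectically invariant; by Corollary~\ref{corollary: cumulants} the \(\ast\)-algebra generated by the ensembles has a limit distribution, so the \(Z_{k}\) have matrix cumulants with the limiting behaviour recorded after the definition of the normalized symplectic matrix cumulant, and \(\mathbb{E}\left(\mathrm{tr}\left(Z_{k}\right)\right)=0\). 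Since \(\phi_{1}\left(a_{1}\cdots a_{p}\right)=\lim_{N}\mathbb{E}\left(\mathrm{tr}\left(Z_{1}\cdots Z_{p}\right)\right)\), it remains to prove that this limit is \(0\) whenever consecutive \(Z_{k}\) have distinct colours, the colours are independent, and \(\mathbb{E}\left(\mathrm{tr}\left(Z_{k}\right)\right)=0\); the case \(p=1\) is immediate, so assume \(p>1\).

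Next I would apply Proposition~\ref{proposition: topological expansion} to \(\mathbb{E}\left(\mathrm{tr}\left(Z_{1}\cdots Z_{p}\right)\right)\), with the single trace-cycle \(\varphi_{\mathrm{tr}}=\left(\infty,1,\ldots,p\right)\), no \(\mathrm{Re}\) brackets, and \(\varepsilon\equiv 1\), obtaining a finite sum over \(\alpha\in\mathrm{PM}\left(p\right)\). By multiplicativity of the matrix cumulants over colours, the term for \(\alpha\) vanishes unless \(\alpha\preceq\pm\ker\left(w\right)\), in which case \(f\left(\alpha\right)\) is a product over the colours. Bounding the exponent of \(N\) by Lemma~\ref{lemma: spheres}, and using that each colour's matrix cumulant converges, the limit is a finite sum of fixed constants times \(\lim_{N}f\left(\alpha\right)\), taken over \(\alpha\in\mathrm{PM}_{\mathrm{nc}}\left(\varphi_{\mathrm{tr}}\right)\) with \(\alpha\preceq\pm\ker\left(w\right)\). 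For such an \(\alpha\), its restriction to \(\left[p\right]\) is disc noncrossing on \(\left(1,\ldots,p\right)\), and because consecutive \(Z_{k}\) have different colours and \(\alpha\preceq\pm\ker\left(w\right)\), no cycle of \(\alpha\) contains both \(k\) and \(k+1\) for \(k\in\left[p-1\right]\); thus \(\alpha\) satisfies the hypotheses of Lemma~\ref{lemma: singlets} with \(\varphi_{+}=\left(1,\ldots,p\right)\).

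By Lemma~\ref{lemma: singlets}, every such \(\alpha\) has a cycle \(\left\{j\right\}\) consisting of a single element (together with its mirror \(\left\{-j\right\}\)). Applying the multiplicativity of the limiting matrix cumulant with the partition \(\tau=\left\{\left\{j\right\},\left[p\right]\setminus\left\{j\right\}\right\}\) (legitimate since \(\alpha\preceq\pm\tau\)), one gets \(\lim_{N}f\left(\alpha\right)=\lim_{N}f\left(\left.\alpha\right|_{\left\{j\right\}}\right)\cdot\lim_{N}f\left(\left.\alpha\right|_{\left[p\right]\setminus\left\{j\right\}}\right)\), and \(f\left(\left.\alpha\right|_{\left\{j\right\}}\right)\) is a constant multiple of \(\mathbb{E}\left(\mathrm{tr}\left(Z_{j}\right)\right)=0\). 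Hence every surviving term vanishes, so \(\lim_{N}\mathbb{E}\left(\mathrm{tr}\left(Z_{1}\cdots Z_{p}\right)\right)=0\) and the ensembles are asymptotically free. I expect the main obstacle to be the bookkeeping of the reduction together with identifying exactly which \(\alpha\) survive in the limit and matching them to the hypotheses of Lemma~\ref{lemma: singlets} — in particular, that the surviving \(\alpha\) are disc noncrossing and respect the colour word; once this is in place, the combinatorial input of Lemma~\ref{lemma: singlets} and the multiplicativity of the matrix cumulants close the argument.
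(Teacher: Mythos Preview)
Your argument is correct and follows essentially the same route as the paper: restrict to disc-noncrossing \(\alpha\preceq\pm\ker(w)\), invoke Lemma~\ref{lemma: singlets} to produce a singleton cycle, and conclude from \(\mathbb{E}\left(\mathrm{tr}\left(Z_{j}\right)\right)=0\). The only cosmetic difference is that the paper applies Proposition~\ref{proposition: cumulants} with \(m=1\) (so the vanishing factor appears via a fixed point of the inner permutation \(\pi\preceq\alpha\)), whereas you apply Proposition~\ref{proposition: topological expansion} directly and extract the same factor via the asymptotic multiplicativity of \(f\).
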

\begin{proof}
Let \(A_{1},\ldots,A_{C}\) be independent sets of symplectically invariant quaternionic random matrices, and let \(X_{1},\ldots,X_{p}:\Omega\rightarrow\mathbb{H}\) have \(\mathbb{E}\left(\mathrm{tr}\left(X_{k}\right)\right)=0\) and \(X_{k}\in A_{w\left(k\right)}\), where \(w:\left[p\right]\rightarrow\left[C\right]\) is a word where \(w\left(1\right)\neq w\left(2\right)\neq\cdots\neq w\left(p\right)\).  We consider the value of
\[\lim_{N\rightarrow\infty}\mathrm{tr}\left(X_{1}\cdots X_{p}\right)\textrm{.}\]
According to Proposition~\ref{proposition: cumulants} (using the same notation) when \(m=1\), surviving terms will have \(\alpha\in\mathrm{PM}\left(\varphi\right)\) and \(\pi\preceq\alpha\) (since \(\chi\left(\alpha,\pi\right)\leq 2\#\left(\pm\alpha\vee\pi\right)\)).  By Lemma~\ref{lemma: singlets}, there is a cycle of \(\alpha\), and hence of \(\pi\), which contains only one element, so a term \(\mathbb{E}\left(\mathrm{Re}\left(\mathrm{tr}\left(X_{k}\right)\right)\right)=0\) appears for some \(k\in\left[p\right]\).  Thus the expression vanishes.
\end{proof}

\begin{lemma}
Let
\[\varphi_{+}=\left(1,\ldots,p\right)\left(p+1,\ldots,p+q\right)\in S_{p+q}\textrm{.}\]
Let \(\varphi_{-}=\left(-1,\ldots,-p\right)\left(-\left(p+1\right),\ldots,-\left(p+q\right)\right)\) and \(\varphi=\varphi_{+}\varphi_{-}^{-1}\).  Let \(\alpha\in \mathrm{PM}_{\mathrm{nc}}\left(\varphi_{+}\varphi_{-}^{-1}\right)\) have no fixed points and no cycles containing neighbours \(a\), \(\varphi_{+}\left(a\right)\).  Then \(p=q\), and \(\alpha\) is a pairing, with pairs of the form \(\left(i,\varphi^{-i}\left(\pm\left(2p+1\right)-k\right)\right)\), where \(k\in\pm\left[p+1,2p\right]\) can be thought of as a signed offset (see Figure~\ref{figure: spokes}).

If \(\zeta_{+}=\left(1,\ldots,2p\right)\in S_{2p}\), then \(\chi\left(\zeta_{+},\alpha\right)\) depends on \(k\) as follows:
\[\chi\left(\zeta_{+},\alpha\right)=\left\{\begin{array}{ll}2\textrm{,}&k=p\\1\textrm{,}&k=-1\\0\textrm{,}&\textrm{otherwise}\end{array}\right.\textrm{.}\]
\label{lemma: spokes}
\end{lemma}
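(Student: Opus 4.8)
The plan is to argue in three steps: first that $\alpha$ must join the two cycles of $\varphi_{+}$ (so $\chi(\varphi,\alpha)=2$); then that, with no fixed points and no ``neighbour'' cycles, $\alpha$ is forced to be a perfect matching between the two cycles of one of finitely many rotational types, in particular $p=q$; and finally a direct cycle count of $K(\zeta_{+},\alpha)$ to evaluate $\chi(\zeta_{+},\alpha)$. For the first step, the partition $\pm\varphi_{+}$ has exactly the two blocks $\pm[p]$ and $\pm[p+1,p+q]$, so $\#(\pm\varphi_{+}\vee\alpha)\in\{1,2\}$ and, because $\alpha\in\mathrm{PM}_{\mathrm{nc}}(\varphi)$, $\chi(\varphi,\alpha)\in\{2,4\}$. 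If $\chi(\varphi,\alpha)=4$ then no cycle of $\alpha$ meets both blocks; taking $\pi=\{[p],[p+1,p+q]\}$, so $\varphi_{+}\preceq\pi$, Lemma~\ref{lemma: chi} gives $\chi(\varphi,\alpha)=\chi(\varphi|_{[p]},\alpha|_{[p]})+\chi(\varphi|_{[p+1,p+q]},\alpha|_{[p+1,p+q]})$, and Lemma~\ref{lemma: spheres} bounds each summand by $2$, so each restriction of $\alpha$ is noncrossing on the corresponding one-cycle disc. Since no cycle of $\alpha$ contains $a$ and $\varphi_{+}(a)$, Lemma~\ref{lemma: singlets} (applied within each disc, splitting off the two sheets if necessary, or direct inspection when a cycle of $\varphi_{+}$ is a singleton) exhibits a one-element cycle of $\alpha$, contradicting the hypothesis; hence $\alpha$ joins the two cycles of $\varphi_{+}$ and $\chi(\varphi,\alpha)=2$.

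For the second step, applying the same local reasoning to the arcs of each cycle of $\varphi_{+}$ cut off between consecutive endpoints of the $\alpha$-edges that cross between the two cycles shows that each such arc carries a disc-noncrossing piece of $\alpha$ and hence, by Lemma~\ref{lemma: singlets} together with ``no fixed points'' and ``no neighbours,'' must be empty. Thus $\alpha$ has no cycle contained in one block of $\pm\varphi_{+}$: it is a perfect matching of the two cycles, so $p=q$. For such an $\alpha$, writing out $K(\varphi,\alpha)=\varphi_{+}^{-1}\alpha^{-1}\varphi_{-}$ shows that $K$ sends the positive copy of one polygon onto a copy of the other and back, so all cycles of $K$ have even length and $\#(K(\varphi,\alpha))=2\,\#\!\left(K^{2}|_{[p]}\right)$; the constraint $\chi(\varphi,\alpha)=2$ is then $\#(K(\varphi,\alpha))=2p$, i.e.\ $K^{2}|_{[p]}=\mathrm{id}$, which forces the matching to intertwine the cyclic shift of one polygon with either the shift of the other or its inverse — according as $\alpha$ matches like-signed or oppositely-signed copies. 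Unwinding through $\delta$, these are precisely the $2p$ pairings $\bigl(i,\varphi^{-i}(\pm(2p+1)-k)\bigr)$, the sign of the offset $k$ recording which of the two types occurs.

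For the third step, $\zeta_{+}$ is a single $2p$-cycle, so $\#(\zeta)=2$, $\#(\alpha)=2p$, $|I|=2p$, and $\chi(\zeta,\alpha)=\tfrac{1}{2}\#(K(\zeta_{+},\alpha))+1-p$; it remains to count the cycles of $K(\zeta_{+},\alpha)=\zeta_{+}^{-1}\alpha^{-1}\zeta_{-}$. In the like-signed family $\alpha$ restricts to a matching $\sigma$ of $[2p]$ between the two polygons, $K(\zeta_{+},\alpha)$ restricts to $\zeta_{+}^{-1}\sigma$ on $[2p]$ and to its inverse on $-[2p]$, so $\#(K(\zeta_{+},\alpha))=2\,\#(\zeta_{+}^{-1}\sigma)$; a direct computation shows $\zeta_{+}^{-1}\sigma$ has $p+1$ cycles exactly when $\sigma$ is the order-reversing matching (this is $k=p$, giving $\chi(\zeta,\alpha)=2$) and $p-1$ cycles in every other case (giving $\chi(\zeta,\alpha)=0$). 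In the oppositely-signed family $K(\zeta_{+},\alpha)$ interchanges the two sheets, and the analogous count yields $\#(K(\zeta_{+},\alpha))=2p$ for the single ``radial'' matching ($k=-1$, giving $\chi(\zeta,\alpha)=1$) and $2p-2$ otherwise ($\chi(\zeta,\alpha)=0$). Collecting the cases gives the stated values of $\chi(\zeta_{+},\alpha)$.

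The step I expect to be the main obstacle is the third one: carrying out the cycle count of $\zeta_{+}^{-1}\sigma$, and of $K(\zeta_{+},\alpha)$ in the mixed-sheet case, uniformly in the offset $k$, and checking that the number of cycles drops by exactly the right amount the moment $k$ leaves its distinguished value ($p$, resp.\ $-1$). A secondary difficulty is making the rigidity in the second step fully precise: the local application of Lemma~\ref{lemma: singlets} to the arcs must be set up so that it really does exclude every $\alpha$ except the $2p$ rotational matchings, with $p=q$ falling out as a consequence.
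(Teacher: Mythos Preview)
Your three–step plan matches the paper's architecture, and your Step~1 is essentially what the paper uses (implicitly, via the disc/annulus characterisation lemma) to reduce to the annular case with $\chi(\varphi,\alpha)=2$.

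The substantive difference is in Step~2. The paper does not argue via ``arcs between consecutive crossing endpoints'' followed by a $K^{2}\!\mid_{[p]}=\mathrm{id}$ rigidity argument. Instead it invokes the $\lambda_{x,y}$ device directly from Definition~\ref{definition: annular noncrossing}: pick $x,y$ in distinct $\varphi$-cycles lying in a common $\alpha$-cycle, so that $\alpha|_{I\setminus\{x,y\}}$ is disc-noncrossing on the single cycle $\lambda_{x,y}$; then the proof of Lemma~\ref{lemma: singlets} on that disc shows that any $\alpha$-cycle containing two elements of the same $\varphi$-orbit would force a singlet or a pair of $\lambda_{x,y}$-neighbours, which is excluded. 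This single reduction simultaneously yields (i) that every $\alpha$-cycle meets each $\varphi$-orbit at most once, hence $\alpha$ is a pairing between the two cycles and $p=q$, and (ii), via the third annular-crossing condition, that all pairs have the \emph{same} signed offset $k$. Your arc argument, as you note yourself, only shows that every element lies in a crossing cycle; it does not by itself rule out $\alpha$-cycles such as $(1,p+2,3)$ with two elements from $[p]$, so your passage to ``$\alpha$ is a perfect matching'' is not yet justified, and your $K^{2}$ computation already presupposes that matching structure. The $\lambda_{x,y}$ reduction closes exactly this gap with no extra work.

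For Step~3 the paper does what you propose but carries it out explicitly: it writes down $\mathrm{FD}(K(\zeta_{+},\alpha))$ in each of the four cases $k=p$, $k\in[p-1]$, $k=-1$, $k\in[-p,-2]$, reads off $\#(K(\zeta_{+},\alpha))$ as $2(p+1)$, $2(p-1)$, $2p$, $2(p-1)$ respectively, and obtains $\chi(\zeta_{+},\alpha)=2,0,1,0$. Your more conceptual cycle-count via $\zeta_{+}^{-1}\sigma$ would give the same answers, but the paper's brute-force listing is short and avoids having to argue uniformly in $k$.
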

\begin{proof}
Since \(\alpha\in\mathrm{PM}_{\mathrm{nc}}\left(\varphi_{+}\varphi_{-}^{-1}\right)\), then for some choice of sign, it does not connect \(I:=\left[p\right]\cup\left(\pm\left[p+1,p+q\right]\right)\) to \(-I\).  Throughout the rest of this proof, we will implicitly consider \(\varphi\) and \(\alpha\) on \(I\).  Then \(\alpha\) is annular noncrossing on \(\varphi\) as described in Definition~\ref{definition: annular noncrossing}.

We show first that \(\alpha\) cannot contain any \(a\) and \(b\) in the same orbit of \(\varphi\).  We know there is at least one cycle of \(\alpha\) which contains an \(x,y\in I\) in distinct cycles of \(\varphi\), and \(\left.\alpha\right|_{I\setminus\left\{x,y\right\}}\in S_{\mathrm{disc-nc}}\left(\lambda_{x,y}\right)\) (the second and third annular crossing conditions imply the first and second disc crossing conditions, respectively, for cycles not containing \(x\) and \(y\), and the annular nonstandard conditions and the first annular crossing condition imply the first and second disc crossing conditions, respectively, when one of the cycles involved is the one containing \(x\) and \(y\)).  Then, following the proof of Lemma~\ref{lemma: singlets}, if a cycle of \(\alpha\) contains two elements from the same cycle of \(\varphi\), then a cycle of \(\alpha\) must contain either a singlet or two neighbours in \(\lambda_{x,y}\).

Thus \(\alpha\) must be a pairing on \(I\), in which each pair contains an element of \(\left[p\right]\) and an element of \(\pm\left[p+1,p+q\right]\) (so \(p=q\)).  Define a signed offset \(k\in\pm\left[p\right]\) by letting \(\pm\left(2p+1\right)-k\) be the partner of \(p\).  Then for \(i\in\left[p\right]\), \(i\) is partnered with \(\varphi^{-i}\left(\pm\left(2p+1\right)-k\right)\).  (Otherwise, let \(i\) be the first element of \(\left[p\right]\) not partnered with \(\varphi^{-i}\left(k\right)\); the partner of \(i\) will be \(\varphi^{-\left(i+m\right)}\left(\pm\left(2p+1\right)-k\right)\) for some \(0<m<p\).  Letting \(x=i-1\) and \(y=\varphi^{-\left(i-1\right)}\left(\pm\left(2p+1\right)-k\right)\), the third annular noncrossing condition with \(\lambda_{x,y}\) implies that the elements in \(\left\{\varphi^{-i}\left(\pm\left(2p+1\right)-k\right),\ldots,\varphi^{-\left(i+m-1\right)}\left(\pm\left(2p+1\right)-k\right)\right\}\) may not share cycles with elements outside that set.  Since they are all elements of the same cycle of \(\varphi\), they may also not share cycles with each other, a contradiction.)

We may calculate \(K\left(\zeta_{+},\alpha\right)\) for several cases.  If \(k=p\), then
\[\mathrm{FD}\left(K\left(\zeta_{+},\alpha\right)\right)=\left(1,2p-1\right)\left(2,2p-2\right)\cdots\left(p-1,p+1\right)\left(p\right)\left(2p\right)\]
so \(\chi\left(\zeta_{+},\alpha\right)=2\).  If \(k\in\left[p-1\right]\), then
\begin{multline*}
\mathrm{FD}\left(K\left(\zeta_{+},\alpha\right)\right)\\=\left(1,2p-k-1\right)\left(2,2p-k-2\right)\cdots\left(p-k-1,p+1\right)\left(p-k,p,2p-k,2p\right)\\\left(p-k+1,2p-1\right)\cdots\left(p-1,2p-k+1\right)
\end{multline*}
so \(\chi\left(\zeta_{+},\alpha\right)=0\).  If \(k=-1\), then \[\mathrm{FD}\left(K\left(\zeta_{+},\alpha\right)\right)=\left(1,-\left(p+1\right)\right)\left(2,-\left(p+2\right)\right)\cdots\left(p-1,-\left(2p-1\right)\right)\left(p,-2p\right)\]
so \(\chi\left(\zeta_{+},\alpha\right)=1\).  If \(k\in\left[-p,-2\right]\), then
\begin{multline*}
\mathrm{FD}\left(K\left(\zeta_{+},\alpha\right)\right)\\=\left(1,-\left(2p+k+2\right)\right)\left(2,-\left(2p+k+3\right)\right)\cdots\left(-k,-\left(2p-1\right)\right)\\\left(-k+1,-2p,2p+k+1,-p\right)\left(-k+2,-\left(p+1\right)\right)\cdots\left(p-1,-\left(2p+k\right)\right)
\end{multline*}
so \(\chi\left(\zeta_{+},\alpha\right)=0\).
\end{proof}

\begin{proposition}
Let \(A_{1},\ldots,A_{C}\) be independent sets of symplectically invariant quaternionic matrices with second-order limit distributions.

Let \(X_{1},\ldots,X_{p},Y_{1},\ldots,Y_{q}:\Omega\rightarrow M_{N\times N}\left(\mathbb{H}\right)\) be random quaternionic matrices with \(\mathbb{E}\left(\mathrm{tr}\left(X_{k}\right)\right)=\mathbb{E}\left(\mathrm{tr}\left(Y_{k}\right)\right)=0\) for all \(k\), and such that \(X_{k}\in A_{u\left(k\right)}\) and \(Y_{k}\in A_{v\left(k\right)}\), for words \(u:\left[p\right]\rightarrow\left[C\right]\) with either \(u\left(1\right)\neq u\left(2\right)\neq\cdots\neq u\left(p\right)\neq u\left(1\right)\) or \(p=1\) and \(v:\left[q\right]\rightarrow\left[C\right]\) with either \(v\left(1\right)\neq v\left(2\right)\neq\cdots\neq v\left(q\right)\neq v\left(1\right)\) or \(q=1\).

Then if \(p\neq q\), or if \(p=q=1\) and \(u\left(1\right)\neq v\left(1\right)\):
\[\lim_{N\rightarrow\infty}k_{2}\left(\mathrm{tr}\left(X_{1}\cdots X_{p}\right),\mathrm{tr}\left(Y_{1}\cdots Y_{q}\right)\right)=0\]
and if \(p=q>1\):
\begin{multline*}
\lim_{N\rightarrow\infty}k_{2}\left(\mathrm{tr}\left(X_{1}\cdots X_{p}\right),\mathrm{tr}\left(Y_{1}\cdots Y_{q}\right)\right)
\\=\prod_{i=1}^{p}\lim_{N\rightarrow\infty}\mathbb{E}\left(\mathrm{Re}\left(\mathrm{tr}\left(X_{i}Y_{p+1-i}\right)\right)\right)+\frac{1}{4}\sum_{k=1}^{p-1}\prod_{i=1}^{p}\lim_{N\rightarrow\infty}\mathbb{E}\left(\mathrm{Re}\left(\mathrm{tr}\left(X_{i}Y_{p+1-k-i}\right)\right)\right)
\\-\frac{1}{2}\prod_{i=1}^{p}\lim_{N\rightarrow}\mathbb{E}\left(\mathrm{Re}\left(\mathrm{tr}\left(X_{i}Y_{i}^{\ast}\right)\right)\right)+\frac{1}{4}\sum_{k=1}^{p-1}\prod_{i=1}^{p}\lim_{N\rightarrow\infty}\mathbb{E}\left(\mathrm{Re}\left(\mathrm{tr}\left(X_{i}Y_{k+i}^{\ast}\right)\right)\right)
\end{multline*}
where indices are taken modulo \(p\).
\label{proposition: spokes}
\end{proposition}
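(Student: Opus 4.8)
The plan is to apply Proposition~\ref{proposition: cumulants} with $m=2$.  Relabelling $Y_{j}=:X_{p+j}$, so that $\zeta=\left(\infty,1,\ldots,p+q\right)$ and $\varphi=\left(\infty\right)\left(1,\ldots,p\right)\left(p+1,\ldots,p+q\right)$, the quantity of interest is the second cumulant appearing in that proposition.  By Corollary~\ref{corollary: cumulants} the second cumulant of the normalized traces is $O\left(N^{-2}\right)$, so after the appropriate rescaling only the leading terms contribute in the limit; these have $\alpha\in\mathrm{PM}_{\mathrm{nc}}\left(\varphi\right)$, $\pi\in\mathrm{PM}_{\mathrm{nc}}\left(\alpha\right)$, together with $\rho=\left(\delta\alpha\right)\vee\left(\delta\pi\right)$ and $\sigma=0$ forced by the second-order limit distribution hypothesis (mixed cumulants vanish and $N^{\left|V\right|}k_{\left|V\right|}\to 0$ for $\left|V\right|>2$, so larger blocks of $\sigma$ and non-minimal $\rho$ cost too much in powers of $N$).

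First I would rule out the degenerate shapes of $\alpha$.  Since the matrix cumulant $f_{c}$ vanishes unless $\alpha\preceq\pm\ker\left(w\right)$ and $w$ is cyclically alternating along each cycle of $\varphi_{+}$, no cycle of a surviving $\alpha$ contains a pair of $\varphi_{+}$-neighbours lying in a single cycle of $\varphi_{+}$.  Since $\mathbb{E}\left(\mathrm{tr}\left(X_{k}\right)\right)=\mathbb{E}\left(\mathrm{tr}\left(Y_{k}\right)\right)=0$, a fixed point of $\alpha$ produces a vanishing factor $\mathbb{E}\left(\mathrm{Re}\left(\mathrm{tr}\left(X_{k}\right)\right)\right)=0$ (or, if absorbed into a larger block of $\sigma$, a strictly subleading term), so surviving $\alpha$ have no fixed points.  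Lemma~\ref{lemma: spokes} then applies (its proof invokes Lemma~\ref{lemma: singlets} to rule out an $\alpha$ supported on the two cycles of $\varphi_{+}$ individually): one gets $p=q$, $\alpha$ is a spoke pairing indexed by a signed offset, and $\chi\left(\zeta_{+},\alpha\right)\in\left\{2,1,0\right\}$.  When $p\neq q$ no term survives, so the limit is $0$; when $p=q=1$ with $u\left(1\right)\neq v\left(1\right)$ the result is immediate from independence of $X_{1}$ and $Y_{1}$.

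For $p=q$ I would match coefficients.  For a fixed spoke the prefactor $\left(-2\right)^{\chi\left(\zeta,\alpha\right)-2}$ equals $1$ for the sphere ($\chi=2$), $-\tfrac{1}{2}$ for the projective plane ($\chi=1$), and $\tfrac{1}{4}$ for the torus and for the Klein bottle ($\chi=0$); the $2p$ possible offsets distribute as $1+\left(p-1\right)+1+\left(p-1\right)$ among these four families, matching precisely the two single products and the two sums $\sum_{k=1}^{p-1}$ in the statement, with the spoke partner of each $X_{i}$ and the conjugation sign of the matching $Y$ read off from the vertex permutation $K\left(\zeta_{+},\alpha\right)$ computed in Lemma~\ref{lemma: spokes}.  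It then remains to show that, for each fixed spoke, the residual sum
\[\sum_{\pi,\rho,\sigma}\left(-2\right)^{\chi\left(\alpha,\pi\right)-\#\left(\alpha\right)}N^{\chi\left(\varphi,\alpha\right)-4+\chi\left(\alpha,\pi\right)-\#\left(\alpha\right)}c_{\left(\delta\alpha\right)\vee\left(\delta\pi\right),\left(\delta\alpha\right)\vee\left(\delta\pi\right),\rho}\,k_{\sigma}\left(\cdots\right)\longrightarrow\prod_{i=1}^{p}\lim_{N\rightarrow\infty}\mathbb{E}\left(\mathrm{Re}\left(\mathrm{tr}\left(X_{i}Y_{\cdot}^{\left(\pm 1\right)}\right)\right)\right)\textrm{.}\]
Because $\alpha$ already makes $\pm\varphi\vee\alpha=1_{\pm I}$, the connectivity constraint is automatic, the matrix cumulant $f\left(\alpha\right)=\prod_{c}f_{c}\left(\left.\alpha\right|_{w^{-1}\left(c\right)}\right)$ factors over colours, and the residual data $\left(\pi,\rho,\sigma\right)$ and the Weingarten cumulant split along the $p$ spokes; up to the index relabelling recorded by $\alpha$ (which carries $X$-side labels to $Y$-side labels, the sign dictating whether $Y_{\cdot}$ or $Y_{\cdot}^{\ast}$ appears), each piece is exactly the $m=1$ topological expansion of Proposition~\ref{proposition: topological expansion} applied to the two-matrix product $X_{i}Y_{\cdot}^{\left(\pm 1\right)}$, so the residual sum reassembles into the claimed product.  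The appearance of the real part is forced because $\chi\left(\zeta_{+},\alpha\right)$ is the Euler characteristic of the $\mathrm{Re}$-surface and a spoke diagram together with its $\delta$-image contribute equal quantities whose quaternion-imaginary parts cancel.

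The step I expect to be the main obstacle is this last reassembly: cleanly splitting the sum over $\left(\pi,\rho,\sigma\right)$ and over the Weingarten cumulant into an honest product over the $p$ spokes while tracking every factor of $\left(-2\right)$ coming from the various Euler characteristics, the $2^{-\left(\#\left(\pi\right)-1\right)}$ normalization built into $\mathrm{Re}_{\mathrm{FD}\left(\pi\right)}$, and the Weingarten normalizations; and, feeding back into the first paragraph, verifying that fixed points of $\alpha$ absorbed into larger blocks of $\sigma$, as well as non-minimal $\rho$, really are strictly subleading for $m=2$, so that the leading contributions are exactly the spoke pairings with $\sigma=0$ and $\rho$ minimal.
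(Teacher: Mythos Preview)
Your outline follows the paper's proof almost exactly: apply Proposition~\ref{proposition: cumulants} with \(m=2\), use Corollary~\ref{corollary: cumulants} to restrict to \(\alpha\in\mathrm{PM}_{\mathrm{nc}}\left(\varphi\right)\), rule out neighbours by the colour constraint, rule out fixed points by centring, invoke Lemma~\ref{lemma: spokes} to reduce to spoke pairings, and read off the coefficient \(\left(-2\right)^{\chi\left(\zeta,\alpha\right)-2}\).

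Two remarks on the places where you diverge from (or overcomplicate) the paper's argument.  First, the case where \(\alpha\) does not connect the two cycles of \(\varphi\) deserves the separate treatment the paper gives it: in that situation one cannot immediately assert \(\pi\preceq\alpha\), \(\rho\) minimal, \(\sigma=0\) for leading terms, because connectivity may be supplied by \(\pi\), \(\rho\), or \(\hat{\sigma}\).  The paper uses Lemma~\ref{lemma: singlets} to produce at least \emph{two} singletons in \(\alpha\) on one side, and then argues case by case that \(\pi\), \(\rho\), or \(\sigma\) can absorb at most one of them at leading order, so a factor \(\mathbb{E}\left(\mathrm{Re}\left(\mathrm{tr}\left(X_{k}\right)\right)\right)=0\) survives.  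Your shortcut (``rule out fixed points first, then Lemma~\ref{lemma: spokes} forces connectedness'') is circular unless you supply this analysis.

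Second, the step you flag as the main obstacle is in fact trivial.  Once \(\alpha\) is a spoke pairing, each cycle of \(\alpha\) has exactly two elements; since in a leading term \(\pi\preceq\alpha\), every cycle of \(\pi\) has one or two elements, and a one-element cycle contributes \(\mathbb{E}\left(\mathrm{Re}\left(\mathrm{tr}\left(X_{k}\right)\right)\right)=0\).  Hence \(\pi=\alpha\), so \(\delta\alpha=\delta\pi\), \(\Lambda\left(\left(\delta\alpha\right)\vee\left(\delta\pi\right)\right)=\left(1,\ldots,1\right)\), the Catalan product is \(1\), and \(\left(\mathbb{E}\circ\mathrm{Re}\circ\mathrm{tr}\right)_{\mathrm{FD}\left(\alpha\right)}\) is already the product \(\prod_{i}\mathbb{E}\left(\mathrm{Re}\left(\mathrm{tr}\left(X_{i}Y_{\cdot}^{\left(\pm 1\right)}\right)\right)\right)\).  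There is no residual sum over \(\left(\pi,\rho,\sigma\right)\) to reassemble, no splitting of the Weingarten cumulant, and no need to invoke an \(m=1\) topological expansion per spoke.  (Also, the spoke partner of \(X_{i}\) and the sign on \(Y\) are read off from \(\alpha\) itself, not from \(K\left(\zeta_{+},\alpha\right)\); the latter is used only to compute \(\chi\left(\zeta,\alpha\right)\).)
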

\begin{remark}
We note that if \(u\left(k\right)\neq v\left(l\right)\), then
\begin{multline*}
\mathbb{E}\left(\mathrm{Re}\left(\mathrm{tr}\left(X_{k}Y_{l}^{\left(\pm 1\right)}\right)\right)\right)=f_{u\left(k\right)}\left(\left(k\right)\left(-k\right)\right)f_{v\left(l\right)}\left(\left(l\right)\left(-l\right)\right)\\=\mathbb{E}\left(\mathrm{Re}\left(\mathrm{tr}\left(X_{k}\right)\right)\right)\mathbb{E}\left(\mathrm{Re}\left(\mathrm{tr}\left(Y_{l}\right)\right)\right)=0\textrm{,}
\end{multline*}
so any summand containing a term which matches matrices with \(u\left(k\right)\neq v\left(l\right)\) vanishes.
\end{remark}
\begin{proof}
Let
\[\varphi_{\mathrm{tr}}:=\left(\infty\right)\left(1,\ldots,p\right)\left(p+1,\ldots,p+q\right)\textrm{,}\]
let
\[\varphi_{\mathrm{Re}}:=\left(\infty,1,\ldots,p+q\right)\textrm{,}\]
and let \(w:\left[p+q\right]\rightarrow\left[C\right]\) be given by
\[w\left(k\right)=\left\{\begin{array}{ll}u\left(k\right)\textrm{,}&k\in\left[p\right]\\v\left(k-p\right)&k\in\left[p+1,p+q\right]\end{array}\right.\textrm{.}\]

If \(p=q=1\) and \(u\left(1\right)\neq v\left(1\right)\), then \(k_{2}\left(\mathrm{Tr}\left(X_{1}\right),\mathrm{Tr}\left(Y_{1}\right)\right)\) is the covariance of independent quantities and hence vanishes.

If \(p\neq q\) or \(p=q>2\), then applying Proposition~\ref{proposition: cumulants} to the left-hand side of the desired covariance in the statement of the proposition, we note that if \(\alpha\) does not connect the blocks of \(\pm\varphi\), then at least one of \(\left.\alpha\right|_{\left[p\right]}\) and \(\left.\alpha\right|_{\left[p+1,p+q\right]}\) must contain at least two single-element cycles (Lemma~\ref{lemma: singlets}).  If \(\pi\) connects the blocks of \(\pm\varphi\), then in an asymptotically nonvanishing term \(\pi\) must connect at most two blocks of \(\pm\alpha\) (since \(\chi\left(\alpha,\pi\right)\leq 2\#\left(\pm\alpha\vee\pi\right)\)), and \(\sigma=0_{p}\).  Thus a term of the form \(\mathbb{E}\left(\mathrm{Re}\left(\mathrm{tr}\left(X_{k}\right)\right)\right)=0\) or \(\mathbb{E}\left(\mathrm{Re}\left(\mathrm{tr}\left(Y_{k-p}\right)\right)\right)=0\) must appear in the expansion, where \(k\) is the element from a single-element cycle of \(\alpha\) which \(\pi\) does not connect to another cycle, and so the term vanishes.  Likewise, if \(\rho\) or \(\sigma\) connects the blocks of \(\pm\varphi\), then \(\pi\) must not connect any cycles of \(\alpha\), and there may not be more than one block of \(\sigma\) containing more than one element, so there must again be a term \(\mathbb{E}\left(\mathrm{Re}\left(\mathrm{tr}\left(X_{k}\right)\right)\right)=0\) or \(\mathbb{E}\left(\mathrm{Re}\left(\mathrm{tr}\left(Y_{k-p}\right)\right)\right)=0\) where \(k\) is the element from a single-element cycle of \(\pi\) which appears in a single-element block of \(\hat{\sigma}\).  Thus all terms in which \(\alpha\) does not connect the blocks of \(\pm\varphi\) vanish asymptotically.

In an asymptotically nonvanishing term in which \(\alpha\) connects the blocks of \(\pm\varphi\), \(\pi\) does not connect cycles of \(\alpha\), \(\rho=\left(\delta\alpha\right)\vee\left(\delta\pi\right)\), and \(\sigma=0_{p}\).  We calculate:
\begin{multline*}
\lim_{N\rightarrow\infty}k_{2}\left(\mathrm{Tr}\left(X_{1}\cdots X_{p}\right),\mathrm{Tr}\left(Y_{1}\cdots Y_{q}\right)\right)
\\=\sum_{\substack{\alpha\in \mathrm{PM}_{\mathrm{nc}}\left(\varphi_{\mathrm{tr}}\right)\\\pm\varphi\vee\alpha=1_{\pm\left[p+q\right]}}}\left(-2\right)^{\chi\left(\varphi_{\mathrm{Re}},\alpha\right)-2}\sum_{\pi\in S_{\mathrm{nc}}\left(\alpha\right):\pi\preceq\alpha}\left(\prod_{i\in\Lambda\left(\left(\delta\alpha\right)\vee\left(\delta\pi\right)\right)}\left(-1\right)^{i-1}C_{i-1}\right)\\\times\left(\mathbb{E}\circ\mathrm{Re}\circ\mathrm{tr}\right)_{\mathrm{FD}\left(\pi\right)}\left(X_{1},\ldots,X_{p},Y_{1},\ldots,Y_{q}\right)\textrm{.}
\end{multline*}

If \(\alpha\) contains any single-element cycle, then \(\mathbb{E}\left(\mathrm{Re}\left(\mathrm{tr}\left(X_{k}\right)\right)\right)=0\) or \(\mathbb{E}\left(\mathrm{Re}\left(\mathrm{tr}\left(Y_{k}\right)\right)\right)=0\) must appear for some \(k\).  Thus we may restrict our attention to \(\alpha\) which satisfy the hypotheses of Lemma~\ref{lemma: spokes}.

Since such \(\alpha\) are pairings, any \(\pi\preceq\alpha\) has cycles of length \(1\) or \(2\).  If any cycle of \(\pi\) has length \(1\) the term again vanishes.  Thus \(\pi=\alpha\) for any nonvanishing term.

For such a term, \(\delta\alpha=\delta\pi\), so \(\Lambda\left(\left(\delta\alpha\right)\vee\left(\delta\pi\right)\right)=\left(1,\ldots,1\right)\).  Thus the contribution of the product of Catalan numbers is \(1\).  Lemma~\ref{lemma: spokes} gives the possible values of \(\chi\left(\varphi_{\mathrm{tr}},\alpha\right)\), and hence the power of \(-2\), for the possible cases of \(\alpha\).
\end{proof}

This culminates in the following:
\begin{theorem}
Independent sets of symplectically invariant random quaternionic matrices are asymptotically quaternionic second-order free.
\end{theorem}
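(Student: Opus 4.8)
The plan is simply to assemble the results of Sections~\ref{section: cumulants} and~\ref{section: second-order freeness}; no new computation is required. Throughout, $A_1,\ldots,A_C$ denote independent sets of symplectically invariant quaternionic random matrices, each of which we assume possesses a second-order limit distribution (as is implicit in the notion of asymptotic second-order freeness). First I would invoke Corollary~\ref{corollary: cumulants}: it tells us that the $\ast$-algebra $A$ generated by the $A_c$ has a second-order limit distribution $(A,\phi_1,\phi_2)$, with $\phi_1$ the limit of normalized traces of products, $\phi_2$ the limit of covariances $k_2(\mathrm{Tr}(\cdots),\mathrm{Tr}(\cdots))$ of unnormalized traces, and all higher cumulants of unnormalized traces vanishing in the limit (the $m$th normalized-trace cumulant being $O(N^{-m})$ for $m>1$ and $o(N^{-m})$ for $m>2$, so that $N^{m}$ times it tends to $0$). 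Since each matrix is conjugated by its own conjugate transpose and $A$ is closed under $\ast$, the algebra $A$ carries the required conjugate-linear, order-reversing involution, and the images $x_\lambda$ satisfy $X_{-\lambda}=X_\lambda^\ast$ and $x_{-\lambda}=x_\lambda^\ast$; hence $(A,\phi_1,\phi_2)$ is a quaternionic second-order probability space of the kind demanded in Definition~\ref{definition: second-order freeness}.

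Next I would establish that the images of $A_1,\ldots,A_C$ in this space are quaternion second-order free. The first-order part is immediate from Proposition~\ref{proposition: free}: the images are free. For the $\phi_2$-condition, fix words $u:[p]\to[C]$ and $v:[q]\to[C]$ that are cyclically alternating (or of length $1$) and elements $x_i\in A_{u(i)}$, $y_j\in A_{v(j)}$ with $\phi_1(x_i)=\phi_1(y_j)=0$. By multilinearity of $\phi_1$ and $\phi_2$ it suffices to treat the case where each $x_i$, $y_j$ is a limit of products of generators; choosing representing matrix sequences with $\mathbb{E}(\mathrm{tr}(X_i))=\mathbb{E}(\mathrm{tr}(Y_j))=0$ exactly (which we may do by replacing $X_i$ with $X_i-\mathbb{E}(\mathrm{tr}(X_i))I$, preserving symplectic invariance, independence, and the limit distribution), Proposition~\ref{proposition: spokes} applies verbatim. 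When $p\neq q$, or $p=q=1$ with $u(1)\neq v(1)$, it gives $\phi_2(x_1\cdots x_p,y_1\cdots y_q)=\lim_{N\to\infty}k_2(\mathrm{Tr}(X_1\cdots X_p),\mathrm{Tr}(Y_1\cdots Y_q))=0$, as required. When $p=q>1$, it gives the spoke formula; and since $\mathrm{Re}$ is continuous and linear, $\lim_{N\to\infty}\mathbb{E}(\mathrm{tr}(X_iY_j))=\phi_1(x_iy_j)$ and $\lim_{N\to\infty}\mathbb{E}(\mathrm{tr}(X_iY_j^{\ast}))=\phi_1(x_iy_j^{\ast})$ on the generated algebra, each factor of the form $\lim_{N\to\infty}\mathbb{E}(\mathrm{Re}(\mathrm{tr}(X_iY_j)))$, resp. $\lim_{N\to\infty}\mathbb{E}(\mathrm{Re}(\mathrm{tr}(X_iY_j^{\ast})))$, equals $\mathrm{Re}(\phi_1(x_iy_j))$, resp. $\mathrm{Re}(\phi_1(x_iy_j^{\ast}))$. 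Comparing term by term with Definition~\ref{definition: second-order freeness}, this is exactly the prescribed value of $\phi_2$ in the case $p=q>2$ (and the definition imposes no constraint when $p=q\le 2$).

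Thus the images of $A_1,\ldots,A_C$ are free and satisfy the $\phi_2$-condition, hence are quaternion second-order free; together with the existence of the second-order limit distribution established above, this says precisely that the $A_c$ are asymptotically quaternionic second-order free. I expect no essential obstacle here: the analytic work has already been done in Proposition~\ref{proposition: cumulants} (topological expansion of trace cumulants), Corollary~\ref{corollary: cumulants} (order estimates and existence of the limit), and Proposition~\ref{proposition: spokes} (evaluation of the surviving spoke diagrams and their Euler characteristics via Lemmas~\ref{lemma: singlets} and~\ref{lemma: spokes}). The only points needing care are the bookkeeping translations between unnormalized-trace cumulants and $\phi_2$, between $\mathbb{E}(\mathrm{Re}(\mathrm{tr}(\cdot)))$-limits and $\mathrm{Re}(\phi_1(\cdot))$, and the harmless reduction to trace-centered representatives so that Proposition~\ref{proposition: spokes} can be quoted directly.
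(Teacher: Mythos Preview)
Your proposal is correct and follows exactly the same route as the paper's own proof: cite Corollary~\ref{corollary: cumulants} for the existence of the second-order limit distribution, Proposition~\ref{proposition: free} for asymptotic freeness, and Proposition~\ref{proposition: spokes} for the spoke formula governing $\phi_2$. The extra bookkeeping you supply (centering representatives, translating $\lim\mathbb{E}(\mathrm{Re}(\mathrm{tr}(\cdot)))$ to $\mathrm{Re}(\phi_1(\cdot))$, and noting which $p=q$ cases the definition actually constrains) is reasonable detail that the paper's three-line proof leaves implicit.
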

\begin{proof}
Asymptotic freeness is shown in Proposition~\ref{proposition: free}, second-order limit distribution follows from Corollary~\ref{corollary: cumulants}, and the formula for the asymptotic covariance of traces is given in Proposition~\ref{proposition: spokes}.
\end{proof}

\section{Specific matrix models}
\label{section: zoo}

We demonstrate in this section that four important matrix models have second-order limit distributions, so the results of this paper apply.  We consider upper bounds on cumulants of traces, which are of the form:
\begin{multline}
k_{m}\left(\mathrm{tr}\left(X_{1}^{\left(\varepsilon\left(1\right)\right)}\cdots X_{n_{1}}^{\left(\varepsilon\left(n_{1}\right)\right)}\right),\ldots,\mathrm{tr}\left(X_{n_{m-1}+1}^{\left(\varepsilon\left(n_{m-1}+1\right)\right)}\cdots X_{n_{m}}^{\left(\varepsilon\left(n_{m}\right)\right)}\right)\right)
\\=\sum_{\alpha\in\mathrm{PM}\left(n_{m}\right)}\left(-2\right)^{\chi\left(\zeta,\delta_{\varepsilon}\alpha\delta_{\varepsilon}\right)-2}N^{\chi\left(\varphi,\delta_{\varepsilon}\alpha\delta_{\varepsilon}\right)-2m}\\\times\sum_{\pi\in{\cal P}\left(m\right):\hat{\pi}\succeq\alpha}\mu\left(\pi,1_{m}\right)\prod_{V\in\hat{\pi}}f\left(\left.\alpha\right|_{V}\right)\textrm{.}
\label{formula: cumulant}
\end{multline}

We also give some formulas for the contribution of the four matrix models to the asymptotic covariance.  We may construct a centred element of an algebra of random matrices from any element by subtracting the expected value of its trace: \(X-\mathbb{E}\left(\mathrm{tr}\left(X\right)\right)\).  Then the factors appearing in the summands for the asymptotic covariance are of the form \begin{multline*}
\left(\mathbb{E}\circ\mathrm{Re}\circ\mathrm{tr}\right)\left(\left[X-\mathbb{E}\left(\mathrm{tr}\left(X\right)\right)\right]\left[Y-\mathbb{E}\left(\mathrm{tr}\left(Y\right)\right)\right]\right)\\=\mathbb{E}\left(\mathrm{Re}\left(\mathrm{tr}\left(XY\right)\right)\right)-\mathbb{E}\left(\mathrm{Re}\left(\mathrm{tr}\left(X\right)\right)\right)\mathbb{E}\left(\mathrm{Re}\left(\mathrm{tr}\left(Y\right)\right)\right)\textrm{.}
\end{multline*}
(The value of the function \(\mathbb{E}\circ\mathrm{Re}\circ\mathrm{tr}\) is unchanged when the terms in the product are cycled, and \(\mathbb{E}\) commutes with \(\mathrm{Re}\).)  Since matrix cumulants are asymptotically multiplictive, this is the sum over cumulants associated to diagrams which connect the intervals corresponding to \(X\) and \(Y\) (see Figure~\ref{figure: spoke}).  If \(X=Z_{1}^{\left(\varepsilon\left(1\right)\right)}\cdots Z_{m}^{\left(\varepsilon\left(m\right)\right)}\) and \(Y^{\left(\pm 1\right)}=Z_{m+1}^{\left(\varepsilon\left(m+1\right)\right)}\cdots Z_{m+n}^{\left(\varepsilon\left(m+n\right)\right)}\) where \(Z_{1},\ldots,Z_{m+n}\) have matrix cumulants \(f:\mathrm{PM}\left(m+n\right)\rightarrow\mathbb{C}\), then denoting \(\zeta=\left(1,\ldots,m+n\right)\left(-\left(m+n\right),\ldots,-1\right)\) and \(\pi=\left\{\left[m\right],\left[m+1,m+n\right]\right\}\):
\begin{multline*}
\lim_{N\rightarrow\infty}\left(\mathbb{E}\left(\mathrm{Re}\left(\mathrm{tr}\left(XY\right)\right)\right)-\mathbb{E}\left(\mathrm{Re}\left(\mathrm{tr}\left(X\right)\right)\right)\mathbb{E}\left(\mathrm{Re}\left(\mathrm{tr}\left(Y\right)\right)\right)\right)
\\=\sum_{\alpha\in\mathrm{PM}_{\mathrm{nc}}\left(\zeta\right):\pm\pi\vee \alpha=1_{\pm\left[m+n\right]}}\lim_{N\rightarrow\infty}f\left(\delta_{\varepsilon}\alpha\delta_{\varepsilon}\right)\textrm{.}
\end{multline*}

\begin{example}
If, in the calculation of \(\lim_{N\rightarrow\infty}k_{2}\left(X_{1}\cdots X_{p},Y_{1}\cdots Y_{p}\right)\), we encounter a term \(X_{k}=W_{1}\cdots W_{6}\) paired with a term \(Y_{l}^{\ast}\) where \(Y_{l}=W_{7}\cdots W_{10}\), we consider connected, noncrossing diagrams such as the one in Figure~\ref{figure: spoke}, which we can think of as an enlargement of one of the spokes in the diagrams in Figure~\ref{figure: spokes}.

\begin{figure}
\label{figure: spoke}
\centering
\input{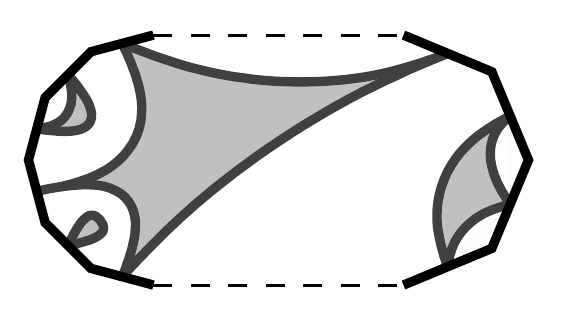_t}
\caption{An enlargement of a term from a spoke between terms \(Y_{k}=W_{1}\cdots W_{6}\) and \(Y_{l}^{\ast}=W_{10}^{\ast}\cdots W_{7}^{\ast}\).}
\end{figure}

The diagram in Figure~\ref{figure: spoke} contributes
\[\mathrm{Re}\left(\mathrm{tr}\left(D_{1}D_{3}D_{6}D_{10}^{\ast}\right)\right)\mathrm{Re}\left(\mathrm{tr}\left(D_{2}\right)\right)\mathrm{Re}\left(\mathrm{tr}\left(D_{4}D_{5}\right)\right)\mathrm{Re}\left(\mathrm{tr}\left(D_{7}^{\ast}D_{9}^{\ast}D_{8}^{\ast}\right)\right)\textrm{.}\]
\end{example}

Throughout, a standard quaternionic Gaussian random variable is a random variable \(\xi_{0}+\xi_{1}i+\xi_{2}j+\xi_{3}k\) where the \(\xi_{i}\) are \(N\left(0,\frac{1}{4}\right)\) random variables and \(G:\Omega\rightarrow M_{M\times N}\left(\mathbb{H}\right)\) is a random \(M\times N\) matrix whose entries are i.i.d.\ standard quaternionic Gaussian random variables.  The matrix cumulants are calculated in \cite{2014arXiv1412.0646R}.

\subsection{Quaternionic Ginibre matrices}

Ginibre matrices are first defined in \cite{MR0173726}.
\begin{definition}
A {\em quaternionic Ginibre matrix} is a random matrix \(Z=\frac{1}{\sqrt{N}}G\) with \(M=N\).
\end{definition}
The matrix cumulants of Ginibre matrices are
\[f\left(\alpha\right)=\left\{\begin{array}{ll}1\textrm{,}&\textrm{\(\alpha\in{\cal P}_{2}\left(\pm\left[n\right]\right)\), \(\mathrm{sgn}\left(\alpha\left(k\right)\right)=-\mathrm{sgn}\left(k\right)\) for all \(k\)}\\0\textrm{,}&\textrm{otherwise}\end{array}\right.\textrm{.}\]

Since the cumulants are multiplicative,
\[\sum_{\pi\in{\cal P}\left(m\right):\hat{\pi}\succeq\alpha}\mu\left(\pi,1_{m}\right)\prod_{V\in\hat{\pi}}f\left(\left.\alpha\right|_{V}\right)=\left\{\begin{array}{ll}1\textrm{,}&\pm\varphi\vee\alpha=1_{\pm n_{m}}\\0\textrm{,}&\textrm{otherwise}\end{array}\right.\]
so in any non-vanishing term of (\ref{formula: cumulant}), \(\chi\left(\varphi,\alpha\right)\leq 2\).  Thus the \(\ast\)-algebra generated by a Ginibre matrix has a second-order limit distribution.

A noncrossing pairing must connect elements with oppositive values of \(\varepsilon\).  The number of such noncrossing pairings depends on the values of \(\varepsilon\).

\subsection{Gaussian symplectic ensemble matrices}

\begin{definition}
A {\em Gaussian symplectic ensemble} matrix, or GSE matrix, is a matrix \(\frac{1}{\sqrt{2N}}\left(G+G^{\ast}\right)\) with \(M=N\).
\end{definition}
The matrix cumulants of GSE matrices are
\[f\left(\alpha\right)=\left\{\begin{array}{ll}1\textrm{,}&\alpha\in{\cal P}_{2}\left(n\right)\\0\textrm{,}&\textrm{otherwise}\end{array}\right.\textrm{.}\]

Like Ginibre matrices, cumulants are multiplicative, so GSE matrices have a limit distribution.

Noncrossing pairings on the disc are enumerated by the Catalan numbers.  If \(X=T^{m}-\mathbb{E}\left(\mathrm{Re}\left(\mathrm{tr}\left(T^{m}\right)\right)\right)\) and \(Y=T^{n}-\mathbb{E}\left(\mathrm{Re}\left(\mathrm{tr}\left(T^{n}\right)\right)\right)\), then the contribution of the spoke is
\[\lim_{N\rightarrow\infty}\mathbb{E}\left(\mathrm{Re}\left(\mathrm{tr}\left(XY\right)\right)\right)=\left\{\begin{array}{ll}C_{\left(m+n\right)/2}-C_{m}C_{n}\textrm{,}&\textrm{\(m\), \(n\) even}\\C_{\left(m+n\right)/2}\textrm{,}&\textrm{\(m\), \(n\) odd}\\0\textrm{,}&\textrm{otherwise}\end{array}\right.\textrm{.}\]

\subsection{Quaternionic Wishart matrices}

Wishart matrices are first defined in \cite{Wishart}.

\begin{definition}
\label{definition: Wishart}
A {\em quaternionic Wishart} matrix is a matrix \(W=\frac{1}{N}G^{\ast}DG\), where \(D\in M_{M\times M}\left(H\right)\) is a deterministic matrix.  We may also consider a set of Wishart matrices with different deterministic matrices \(D_{k}\) but the same (not independent) \(G\): \(W_{1}=\frac{1}{N}G^{\ast}D_{1}G,\ldots,W_{n}=\frac{1}{N}G^{\ast}D_{n}G\).
\end{definition}
The matrix cumulant of \(W_{1},\ldots,W_{n}\) is
\[f\left(\alpha\right)=\left(\mathrm{Re}\circ\mathrm{tr}\right)_{\mathrm{FD}\left(\alpha^{-1}\right)}\left(D_{1},\ldots,D_{n}\right)\textrm{.}\]

As with Ginibre and GSE matrices, the cumulant is multiplicative, so Wishart matrices have a second-order limit distribution.

The value of an expression with Wishart matrices depends on the values of the \(D_{k}\).  If \(D_{k}=I_{M}\) for all \(k\), then it may be expressed in terms of the number of noncrossing hypermaps, which are counted by Catalan numbers.  In this case, if \(X=W_{1}\cdots W_{m}-\mathbb{E}\left(\mathrm{tr}\left(W_{1}\cdots W_{m}\right)\right)\) and \(Y=W_{m+1}\cdots W_{m+n}-\mathbb{E}\left(\mathrm{tr}\left(W_{m+1}\cdots W_{m+n}\right)\right)\), then
\[\mathbb{E}\left(\mathrm{Re}\left(\mathrm{tr}\left(XY\right)\right)\right)=C_{m+n}-C_{m}C_{n}\textrm{.}\]

\subsection{Haar-distributed symplectic matrices}

\begin{definition}
A {\em symplectic matrix} is a matrix \(U\in M_{N\times N}\left(\mathbb{H}\right)\) such that \(UU^{\ast}=I_{N}\).  The symplectic matrices \(\mathrm{Sp}\left(N\right)\) form a compact group, which has a Haar measure of finite measure.  When this measure is normalized to total measure \(1\), it is a probability measure.  A Haar-distributed random symplectic matrix is a matrix distributed according to this probability measure.
\end{definition}
The matrix cumulants of Haar-distributed symplectic matrices are
\[f\left(\alpha\right)=\left\{\begin{array}{ll}\mathrm{wg}\left(\Lambda\left(\mathrm{FD}\left(\alpha\right)\right)\right)\textrm{,}&\mathrm{sgn}\left(\alpha\left(k\right)\right)=-\mathrm{sgn}\left(k\right)\\0\textrm{,}&\textrm{otherwise}\end{array}\right.\textrm{.}\]

As calculated in Lemma~\ref{lemma: Weingarten}
\[\sum_{\pi\in{\cal P}\left(m\right):\hat{\pi}\succeq\alpha}\mu\left(\pi,1_{m}\right)\prod_{V\in\hat{\pi}}f\left(\left.\alpha\right|_{V}\right)=O\left(N^{2-\#\left(\pm\varphi\vee\alpha\right)}\right)\]
which, combined with the exponent on \(N\) in (\ref{formula: cumulant}) (where \(\chi\left(\varphi,\delta_{\varepsilon}\alpha\delta_{\varepsilon}\right)\leq 2\#\left(\pm\varphi\vee\alpha\right)\)) gives us that the \(m\)th cumulant of unnormalized traces is \(O\left(N^{2-m}\right)\), so Haar-distributed symplectic matrices have a second-order limit distribution.

Since \(U^{\ast}=U^{-1}\), any product of \(U\) and \(U^{\ast}\) is of the form \(U^{n}\), \(n\in\mathbb{Z}\).  If \(X=U^{m}-\mathbb{E}\left(\mathrm{tr}\left(U^{m}\right)\right)\) and \(Y=U^{n}-\mathbb{E}\left(\mathrm{tr}\left(U^{n}\right)\right)\), then since a noncrossing \(\alpha\) must take any \(k\) to an \(\alpha\left(k\right)\) such that \(\varepsilon\left(\alpha\left(k\right)\right)=-\varepsilon\left(k\right)\) (i.e.\ from the other term), \(m=-n\) and:
\[\lim_{N\rightarrow\infty}\mathbb{E}\left(\mathrm{Re}\left(\mathrm{tr}\left(XY\right)\right)\right)=\delta_{m,-n}\textrm{.}\]

\bibliography{paper}
\bibliographystyle{plain}

\end{document}